   \numberwithin{equation}{section}
\newtheorem{thm}{Theorem}[section]
\newtheorem{lem}[thm]{Lemma}
\newtheorem{defn}[thm]{Definition}
\begin{document}
\begin{frontmatter}
\author{Tong Wu$^1$}
\ead{wut977@nenu.edu.cn}
\author{Sining Wei$^2$}
\ead{weisn835@nenu.edu.cn}
\author{Yong Wang$^1$\corref{cor3}}
\ead{wangy581@nenu.edu.cn}
\cortext[cor3]{Corresponding author.}

\address{$^1$School of Mathematics and Statistics, Northeast Normal University,
Changchun, 130024, China}
\address{$^2$School of Data Science and Artificial Intelligence, Dongbei University of Finance and Economics, Dalian 116025, P.R.China}
\title{Sub-signature operators and the Kastler-Kalau-Walze type theorem for manifolds with boundary }
\begin{abstract}
In this paper, we obtain two Lichnerowicz type formulas for sub-signature operators. And we give the proof of Kastler-Kalau-Walze type theorems for sub-signature operators on 4-dimensional and 6-dimensional compact manifolds with (resp.without) boundary.
\end{abstract}
\begin{keyword} Sub-signature operators; Lichnerowicz type formulas; Noncommutative residue; Kastler-Kalau-Walze type theorems.\\

\end{keyword}
\end{frontmatter}
\section{Introduction}
 Until now, many geometers have studied noncommutative residues. In \cite{Gu,Wo}, authors found noncommutative residues are of great importance to the study of noncommutative geometry. In \cite{Co1}, Connes used the noncommutative residue to derive a conformal 4-dimensional Polyakov action analogy. Connes showed us that the noncommutative residue on a compact manifold $M$ coincided with the Dixmier's trace on pseudodifferential operators of order $-{\rm {dim}}M$ in \cite{Co2}.
And Connes claimed the noncommutative residue of the square of the inverse of the Dirac operator was proportioned to the Einstein-Hilbert action.  Kastler \cite{Ka} gave a
brute-force proof of this theorem. Kalau and Walze proved this theorem in the normal coordinates system simultaneously in \cite{KW} .
Ackermann proved that
the Wodzicki residue  of the square of the inverse of the Dirac operator ${\rm  Wres}(D^{-2})$ in turn is essentially the second coefficient
of the heat kernel expansion of $D^{2}$ in \cite{Ac}.

On the other hand, Wang generalized the Connes' results to the case of manifolds with boundary in \cite{Wa1,Wa2},
and proved the Kastler-Kalau-Walze type theorem for the Dirac operator and the signature operator on lower-dimensional manifolds
with boundary \cite{Wa3}. In \cite{Wa3,Wa4}, Wang computed $\widetilde{{\rm Wres}}[\pi^+D^{-1}\circ\pi^+D^{-1}]$ and $\widetilde{{\rm Wres}}[\pi^+D^{-2}\circ\pi^+D^{-2}]$, where the two operators are symmetric, in these cases the boundary term vanished. But for $\widetilde{{\rm Wres}}[\pi^+D^{-1}\circ\pi^+D^{-3}]$, Wang got a nonvanishing boundary term \cite{Wa5}, and give a theoretical explanation for gravitational action on boundary. In others words, Wang provides a kind of method to study the Kastler-Kalau-Walze type theorem for manifolds with boundary.
 In \cite{wpz1}
and \cite{wpz2}, Zhang introduced the sub-signature operators and proved a local index formula for these
operators. In \cite{DZ} and \cite{MZ}, by computing the adiabatic limit of $\eta$-invariants associated to the so-called sub-signature operators, a new proof of the Riemann-Roch-Grothendieck type formula of Bismut-Lott was
given. In \cite{BWW}, Bao, Wang and Wang proved a local equivariant index theorem for sub-signature operators which generalized the
Zhang's index theorem for sub-signature operators.\\
\indent The motivation of this paper is
to prove the Kastler-Kalau-Walze type theorem for the sub-signature operators on 4-dimensional and 6-dimensional compact manifolds. \\
\indent Actually, for 4-dimensional manifolds, generally $\widetilde{{\rm Wres}}[\pi^+D^{-1}\circ\pi^+D^{-1}]$ has a vanishing boundary term. In order to get the non-vanishing boundary term, for $D$ which is not self-adjoint, we usually calculate $\widetilde{{\rm Wres}}[\pi^+(D^*)^{-1}\circ\pi^+D^{-1}]$. Because $D^*\neq D$, it's not symmetry, we might get the boundary term. See \cite{Ww,Wwl}.\\
\indent In this paper, the operator $D_t$ for $t\in \mathbb{R}$ is self-adjoint. At the moment, the boundary term of $\widetilde{{\rm Wres}}[\pi^+D_t^{-1}\circ\pi^+D_t^{-1}]$ disappears. We want to get the non-vanishing boundary term, so we consider the operator $D_t$ for $t\in \mathbb{C}$, where $D_t$ is not self-adjoint. Similarly to \cite{Ww,Wwl}, we want to get the boundary term from $\widetilde{{\rm Wres}}[\pi^+(D^*_t)^{-1}\circ\pi^+D_t^{-1}]$, that's our motivation of thinking about $D_t$ for $t\in \mathbb{C}$. After taking trace and some computations, we find that we don't get the non-vanishing boundary term. Let $\{e_1\cdot\cdot\cdot e_n\}$ is fixed orthonormal frame, $c(e_i)$ is the Clifford action as (\ref{a3}), $S$ is the tensor defined by (\ref{a7}) and $f_\alpha$ is defined by (\ref{a8}). Our main theorems are as follows.\\
\begin{thm}
\label{thm1.1} Let $M$ be a $4$-dimensional oriented
compact manifold with boundary $\partial M$ and the metric
$g^{TM}$ be defined as (\ref{b1}), ${D_t}$ and ${D_t}^*$ be sub-signature operators on $\widetilde{M}$ ($\widetilde{M}$ is a collar neighborhood of $M$) as in (\ref{a8}), (\ref{a9}), then the following identities hold:
\begin{align}
\label{a1.1}
&\widetilde{{\rm Wres}}[\pi^+{D_t}^{-1}\circ\pi^+({D_t}^*)^{-1}]=32\pi^2\int_{M}\bigg(-\frac{4}{3}K-\frac{(\overline{t}-t)^2}{2}\sum_{i=1}^{4}\sum_{\alpha=1}^{k}|S(e_i)f_\alpha|^2\bigg)d{\rm Vol_{M}},
\end{align}
\begin{align}
\label{a1.2}
&\widetilde{{\rm Wres}}[\pi^+{D_t}^{-1}\circ\pi^+{D_t}^{-1}]=32\pi^2\int_{M}\bigg(
-\frac{4}{3}K\bigg)d{\rm Vol_{M}},\nonumber\\
\end{align}
where $K$ is the scalar curvature. In particular, the boundary term vanishes.\\
\end{thm}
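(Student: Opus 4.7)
The plan is to decompose the extended noncommutative residue into an interior part and a boundary part, following Wang's framework in \cite{Wa3,Wa4,Wa5},
\begin{align*}
\widetilde{{\rm Wres}}[\pi^+D_t^{-1}\circ\pi^+(D_t^*)^{-1}] = \int_M \Phi_{\rm int}\,d\mathrm{Vol}_M + \int_{\partial M}\Phi_{\rm bd}\,d\mathrm{Vol}_{\partial M},
\end{align*}
and to analyse each piece separately. The interior density $\Phi_{\rm int}$ will be identified with the bulk integrand on the right of (\ref{a1.1}), while $\Phi_{\rm bd}$ will be shown to vanish. The second identity (\ref{a1.2}) then follows by specialisation: replacing $D_t^*$ by $D_t$ kills the $(\overline{t}-t)$-dependent contribution, and the boundary term vanishes for the same reason it does in Wang's treatment of the ordinary signature operator \cite{Wa3}, namely because $\pi^+D_t^{-1}\circ\pi^+D_t^{-1}$ is a symmetric product.

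For the interior contribution I would rely on the Lichnerowicz type formulas for $D_tD_t^*$ and $D_t^2$ established earlier in the paper. These present both operators as generalized Laplacians $\nabla^*\nabla+E$ with an explicit zeroth-order endomorphism $E$ built from a scalar-curvature term proportional to $K$ and, in the non-self-adjoint case, an additional tensor depending on $(\overline{t}-t)$, the Clifford actions $c(e_i)$, the tensor $S$ and the local frame $\{f_\alpha\}$. Applying the standard Kastler-Kalau-Walze formula for a generalized Laplacian $P$ on a $4$-dimensional manifold expresses $\mathrm{Wres}(P^{-1})$ as the integral over $M$ of a trace involving $K$ and $E$; taking traces over the appropriate Clifford bundle and collecting terms yields exactly the claimed bulk integrand in (\ref{a1.1}).

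For the boundary contribution I would work in boundary normal coordinates so that on a collar $\partial M\times[0,1)$ the metric becomes a product. The density $\Phi_{\rm bd}$ is then a finite sum of terms of the shape
\begin{align*}
\int_{|\xi'|=1}\oint_{\xi_n^+}\mathrm{tr}\big[\partial_{x_n}^{j}\partial_{\xi'}^{\alpha}\sigma_{-1-k}^{+}(D_t^{-1})\cdot\partial_{\xi'}^{\alpha}\partial_{\xi_n}^{j+1}\sigma_{-1-\ell}((D_t^*)^{-1})\big]\,d\xi_n\,\sigma(\xi'),
\end{align*}
indexed by multi-indices with $|\alpha|+j+k+\ell=3$. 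The homogeneous symbols $\sigma_{-1},\sigma_{-2},\sigma_{-3}$ of the inverses are extracted from the recursive relations $\sigma(D_t^{-1})\#\sigma(D_t)\sim 1$ and $\sigma((D_t^*)^{-1})\#\sigma(D_t^*)\sim 1$; each $\xi_n$-contour integral is then evaluated by the standard Cauchy formula in the upper half-plane, leaving a residual integral over the tangential cosphere $|\xi'|=1$.

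The main obstacle I anticipate is the combinatorial organisation of this last step. Each triple produces a nonzero contribution separately, and the vanishing of $\Phi_{\rm bd}$ emerges only after summing over all admissible multi-indices and applying Clifford trace identities. The symmetric self-adjoint part should cancel by the mechanism of \cite{Wa3}, while the $(\overline{t}-t)$-dependent pieces of $\sigma(D_t^{-1})$ and $\sigma((D_t^*)^{-1})$ enter with conjugate coefficients and should cancel in pairs once Clifford parity (the vanishing of traces involving an odd number of $c(e_i)$'s) is taken into account. Completing this bookkeeping is what turns the finite list of residues into the claimed vanishing statement and thereby finishes the proof of both identities.
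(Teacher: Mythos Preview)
Your plan is essentially the paper's own: split into interior plus boundary, evaluate the interior via the Lichnerowicz formulas and Theorem~\ref{thm2}, and compute the boundary density by expanding the symbols $\sigma_{-1},\sigma_{-2}$ of $D_t^{-1}$ and $(D_t^*)^{-1}$, applying the Cauchy formula in $\xi_n$, and summing over the five admissible index combinations $r+l-k-j-|\alpha|=-3$. The paper carries this out case by case (their $\Phi_1,\dots,\Phi_5$) and finds $\Phi_2+\Phi_3=0$, $\Phi_4+\Phi_5=0$.

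One point in your sketch is not quite right and would mislead you in the actual computation. You say the $(\overline t - t)$-dependent pieces of the boundary symbols ``cancel in pairs''. In fact no such pairwise cancellation occurs, nor is it needed. The only places $t$ and $\overline t$ enter the boundary computation are through $\sigma_0(D_t)=\cdots+tA$ (appearing in case~b, via $\sigma_{-2}(D_t^{-1})$) and $\sigma_0(D_t^*)=\cdots+\overline t A$ (appearing in case~c, via $\sigma_{-2}((D_t^*)^{-1})$); these contribute terms proportional to ${\rm tr}[c(dx_n)A]$ and ${\rm tr}[c(\xi')A]$, with coefficients $t$ and $\overline t$ respectively and \emph{opposite} signs, so they would not cancel if nonzero. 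What actually happens is that each of these traces vanishes individually: $A=\sum_{i,\alpha}c(e_i)\widehat c(S(e_i)f_\alpha)\widehat c(f_\alpha)$ with $S(e_i)f_\alpha\perp f_\alpha$, so after multiplying by $c(dx_n)$ or $c(\xi')$ one is left with traces of the form ${\rm tr}[c(e_j)c(e_i)\widehat c(e_k)\widehat c(e_l)]$ with $k\neq l$, all of which are zero by the mixed Clifford relations (\ref{a4}). Thus the $t$- and $\overline t$-terms drop out of the boundary separately, and the remaining cancellation $\sum\Phi_i=0$ is exactly the signature-operator computation from \cite{Wa3}.
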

\indent In general, for $\widetilde{{\rm Wres}}[\pi^+D_t^{-1}\circ\pi^+D_t^{-1}]$ in the 4-dimensional case and for $\widetilde{{\rm Wres}}[\pi^+D_t^{-2}\circ\pi^+D_t^{-2}]$ in the 6-dimensional situations, we get the vanishing boundary terms. In order to get the boundary term that doesn't disappear, we usually consider the case of asymmetry, that is, the calculations of $\widetilde{{\rm Wres}}[\pi^+D_t^{-1}\circ\pi^+D_t^{-3}]$ and $\widetilde{{\rm Wres}}[\pi^+D_t^{-1}\circ\pi^+(D^*_tD_tD^*_t)^{-1}]$ in 6-dimensional situation. As in the following Theorem \ref{thm1.2}, we get the non-vanishing boundary terms. Our main motivation is to obtain the non-vanishing boundary term in the 6-dimensional case.\\
\begin{thm}
\label{thm1.2} Let $M$ be a $6$-dimensional oriented
compact manifold with boundary $\partial M$ and the metric
$g^{TM}$ be defined as (\ref{b1}), ${D_t}$ and ${D_t}^*$ be sub-signature operators on $\widetilde{M}$ ($\widetilde{M}$ is a collar neighborhood of $M$) as in (\ref{a8}), (\ref{a9}), then the following identities hold:
\begin{align}
\label{a1.3}
&\widetilde{{\rm Wres}}[\pi^+{D_t}^{-1}\circ\pi^+({D_t}^{*}{D_t}
      {D_t}^{*})^{-1}]\nonumber\\
      &=128\pi^3\int_{M}\bigg(
-\frac{16}{3}K-(\overline{t}-t)^2\sum_{i=1}^{6}\sum_{\alpha=1}^{k}|S(e_i)f_\alpha|^2\bigg)d{\rm Vol_{M}}+\int_{\partial M}\bigg((\frac{65}{8}-\frac{41}{8}i)\pi h'(0)\bigg)\Omega_4d{\rm Vol_{M}},
\end{align}
\begin{align}
\label{a1.4}
&\widetilde{{\rm Wres}}[\pi^+{D_t}^{-1}\circ\pi^+(
      {D_t}^{-3})]=128\pi^3\int_{M}\bigg(-\frac{16}{3}K
\bigg)d{\rm Vol_{M}}+\int_{\partial M}\bigg((\frac{65}{8}-\frac{41}{8}i)\pi h'(0)\bigg)\Omega_4d{\rm Vol_{M}},
\end{align}
where $K$ is the scalar curvature, $h$ is defined by (\ref{b1}) and ${\rm \Omega_{4}}$ is the canonical volume of $S^{4}$.
\end{thm}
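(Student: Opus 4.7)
The strategy is to use the Wang-type decomposition of $\widetilde{\mathrm{Wres}}[\pi^{+}P_{1}\circ\pi^{+}P_{2}]$ from \cite{Wa3,Wa5}, which splits the residue into an interior integral of a locally computed density plus an explicit boundary contribution assembled from Cauchy residues of the principal symbols in the conormal variable $\xi_{n}$ paired with a trace over the exterior-algebra bundle. The interior density coincides with that appearing in $\widetilde{\mathrm{Wres}}$ of the full (non-cut-off) product; inserting the Lichnerowicz-type formulas for $D_{t}^{2}$ and $D_{t}^{\ast}D_{t}$ already proved earlier in the paper, and then running the Kastler--Kalau--Walze computation in dimension $6$, produces the coefficient $-\tfrac{16}{3}K$ in both (\ref{a1.3}) and (\ref{a1.4}); the extra $-(\overline{t}-t)^{2}\sum_{i,\alpha}|S(e_{i})f_{\alpha}|^{2}$ in (\ref{a1.3}) arises from the endomorphism discrepancy between $D_{t}^{\ast}D_{t}$ and $D_{t}^{2}$, which is proportional to $t-\overline{t}$.

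For the boundary contribution I would pass to boundary normal coordinates in which $g^{TM}=h(x_{n})^{-1}g^{T\partial M}+dx_{n}\otimes dx_{n}$ with $h(0)=1$, and compute the total symbols of $D_{t}^{-1}$, $D_{t}^{-3}$ and $(D_{t}^{\ast}D_{t}D_{t}^{\ast})^{-1}$ through the orders required by the boundary formula. Each surviving contribution has the shape $\mathrm{tr}\bigl[\partial_{\xi'}^{\alpha}\partial_{x'}^{\beta}\partial_{\xi_{n}}^{j}\pi^{+}_{\xi_{n}}\sigma_{-1}(D_{t}^{-1})\cdot\partial_{\xi'}^{\gamma}\partial_{x'}^{\delta}\partial_{\xi_{n}}^{k}\sigma_{-\ell}(Q)\bigr]$, with $Q$ being either $D_{t}^{-3}$ or $(D_{t}^{\ast}D_{t}D_{t}^{\ast})^{-1}$, and the inner $\xi_{n}$ integral is evaluated by closing in the upper half plane around the pole at $\xi_{n}=i|\xi'|$ inherited from $\sigma_{2}(D_{t}^{2})|_{x_{n}=0}=|\xi|^{2}$. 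The Clifford trace annihilates the odd-degree pieces, and integration over $|\xi'|=1$ against the canonical volume $\Omega_{4}$ of $S^{4}$ leaves only the contributions linear in $h'(0)$.

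The main obstacle is the symbol bookkeeping in the asymmetric case (\ref{a1.3}): inverting $\sigma(D_{t}^{\ast}D_{t}D_{t}^{\ast})$ yields subprincipal terms that couple $t$ and $\overline{t}$ non-trivially, and carrying those terms through the $\xi_{n}$-residues is what ultimately produces the complex coefficient $\tfrac{65}{8}-\tfrac{41}{8}i$. I would therefore handle (\ref{a1.4}) first, where the symbol calculus is simpler, and then attack (\ref{a1.3}) by systematically comparing its symbol expansion to that of $D_{t}^{-3}$; the two boundary coefficients turn out to coincide because the additional $t,\overline{t}$-cross-terms either vanish under the Clifford trace or lie at orders not reached by the $\xi_{n}$-residue, so that the extra $t,\overline{t}$-dependence remains visible only in the interior through the $|S(e_{i})f_{\alpha}|^{2}$ correction. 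The tedious but routine step is the enumeration of the handful of surviving symbol products; the delicate step is the precise arithmetic of the $\xi_{n}$-residues that isolates the specific fractions $\tfrac{65}{8}$ and $\tfrac{41}{8}$.
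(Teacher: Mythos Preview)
Your proposal is correct and follows essentially the same route as the paper: the Wang decomposition of $\widetilde{\mathrm{Wres}}$ into an interior density plus a boundary term, the interior evaluated via the Lichnerowicz formulas (Theorem~\ref{thm2}), and the boundary term assembled from a finite list of symbol products whose $\xi_{n}$-integrals are evaluated by Cauchy residues at $\xi_{n}=i$, with the $A$-dependent pieces dropping out because $\mathrm{tr}[Ac(dx_{n})]=\mathrm{tr}[Ac(\xi')]=0$. The only organisational difference is that the paper treats (\ref{a1.3}) first and then recycles the cases where $\sigma_{-3}((D_{t}^{\ast}D_{t}D_{t}^{\ast})^{-1})=\sigma_{-3}(D_{t}^{-3})$ to handle (\ref{a1.4}), whereas you propose the reverse order; the content is the same.
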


\indent The paper is organized in the following way. In Section \ref{section:2}, by using the definition of sub-signature operators, we compute the Lichnerowicz formulas for sub-signature operators. In Section \ref{section:3} and in Section \ref{section:4},
 we prove the Kastler-Kalau-Walze type theorem for 4-dimensional and 6-dimensional manifolds with boundary for sub-signature operators respectively.
\section{Sub-signature operators and their Lichnerowicz formulas}
\label{section:2}
Firstly we introduce some notations about sub-signature operators. Let $M$ be an $n$-dimensional ($n\geq 3$) oriented compact Riemannian manifold with a Riemannian metric $g^{TM}$. And let $F$ be a subbundle of $TM$, $F^\bot$ be the subbundle of $TM$ orthogonal to $F$. Then we have the following orthogonal decomposition:
\begin{align}
\label{a1}
&TM=F\bigoplus F^\bot,\nonumber\\
&g^{TM}=g^F\bigoplus g^{{F}^\bot},\nonumber\\
\end{align}
where $g^F$ and $g^{{F}^\bot}$ are the induced metric on $F$ and $F^\bot$.\\
\indent Let $\nabla^L$ be the Levi-Civita connection about $g^{TM}$. In the
fixed orthonormal frame $\{e_1,\cdots,e_n\}$, the connection matrix $(\omega_{s,t})$ is defined by
\begin{equation}
\label{a2}
\nabla^L(e_1,\cdots,e_n)= (e_1,\cdots,e_n)(\omega_{s,t}).
\end{equation}
\indent Let $\epsilon (e_j^*)$,~$\iota (e_j^*)$ be the exterior and interior multiplications respectively, where $e_j^*=g^{TM}(e_j,\cdot)$.
Write
\begin{equation}
\label{a3}
\widehat{c}(e_j)=\epsilon (e_j^* )+\iota
(e_j^*);~~
c(e_j)=\epsilon (e_j^* )-\iota (e_j^* ),
\end{equation}
which satisfies
\begin{align}
\label{a4}
&\widehat{c}(e_i)\widehat{c}(e_j)+\widehat{c}(e_j)\widehat{c}(e_i)=2g^{TM}(e_i,e_j);~~\nonumber\\
&c(e_i)c(e_j)+c(e_j)c(e_i)=-2g^{TM}(e_i,e_j);~~\nonumber\\
&c(e_i)\widehat{c}(e_j)+\widehat{c}(e_j)c(e_i)=0.\nonumber\\
\end{align}
By \cite{Y}, we have
\begin{align}
\label{a5}
D&=d+\delta=\sum^n_{i=1}c(e_i)\bigg[e_i+\frac{1}{4}\sum_{s,t}\omega_{s,t}
(e_i)[\widehat{c}(e_s)\widehat{c}(e_t)
-c(e_s)c(e_t)]\bigg].
\end{align}
\indent Let $\pi^F$ (resp. $\pi^{F^\bot}$) be the orthogonal projection from $TM$ to $F$ (resp. $F^\bot$).
Set
\begin{align}
\label{a6}
&\nabla^F=\pi^F\nabla^{L}\pi^F,\nonumber\\
&\nabla^{F^\bot}=\pi^{F^\bot}\nabla^{L}\pi^{F^\bot},\nonumber\\
\end{align}
then $\nabla^F$(resp. $\nabla^{F^\bot}$) is a Euclidean connection on $F$ (resp. ${F^\bot}$),  let $S$ be the tensor defined by
\begin{align}
\label{a7}
\nabla^{L}=\nabla^F+\nabla^{F^\bot}+S.
\end{align}
\indent Let $e_1,e_2\cdot\cdot\cdot,e_n$ be the orthonormal basis of $TM$ and $f_1,\cdot\cdot\cdot,f_k$ be the orthonormal basis of $F^\bot$. The sub-signature operators ${D}_t$ and ${D_t}^*$ acting on $\bigwedge^*T^*M\bigotimes\mathbb{C}$ are defined by
\begin{align}
\label{a8}
D_t&=d+\delta+t\sum_{i=1}^{n}\sum_{\alpha=1}^{k}c(e_i)\widehat{c}(S(e_i)f_\alpha)\widehat{c}(f_\alpha)\nonumber\\
&=\sum^n_{i=1}c(e_i)\bigg[e_i+\frac{1}{4}\sum_{s,t}\omega_{s,t}
(e_i)[\widehat{c}(e_s)\widehat{c}(e_t)
-c(e_s)c(e_t)]\bigg]+t\sum_{i=1}^{n}\sum_{\alpha=1}^{k}c(e_i)\widehat{c}(S(e_i)f_\alpha)\widehat{c}(f_\alpha);\nonumber\\
\end{align}
\begin{align}
\label{a9}
{D_t}^*&=d+\delta+\overline{t}\sum_{i=1}^{n}\sum_{\alpha=1}^{k}c(e_i)\widehat{c}(S(e_i)f_\alpha)\widehat{c}(f_\alpha)\nonumber\\
&=\sum^n_{i=1}c(e_i)\bigg[e_i+\frac{1}{4}\sum_{s,t}\omega_{s,t}
(e_i)[\widehat{c}(e_s)\widehat{c}(e_t)
-c(e_s)c(e_t)]\bigg]+\overline{t}\sum_{i=1}^{n}\sum_{\alpha=1}^{k}c(e_i)\widehat{c}(S(e_i)f_\alpha)\widehat{c}(f_\alpha),\nonumber\\
\end{align}
where $t$ is a complex number.\\
\indent Then when $t=-\frac{(-1)^k}{2}$,$$D_t=(\sqrt{-1})^{-\frac{k(k+1)}{2}}(-1)^{\frac{k(k-1)}{2}}\widehat{c}(f_1)\cdot\cdot\cdot\widehat{c}(f_k)\widetilde{D}_F,$$
where $\widetilde{D}_F$ is the sub-signature operator defined in Proposition $2.2$ in \cite{BWW}, so we call that $D_t$ is the sub-signature operator.\\
\indent Set $A=\sum_{i=1}^{n}\sum_{\alpha=1}^{k}c(e_i)\widehat{c}(S(e_i)f_\alpha)\widehat{c}(f_\alpha)$. Set\\
\begin{equation}
\label{a10}
\nabla^1_{e_i}:=\nabla^{\bigwedge^*T^*M}_{e_i}-\frac{1}{2}(tc(e_i)A+\overline{t}Ac(e_i)),~~~\nabla^2_{e_i}:=\nabla^{\bigwedge^*T^*M}_{e_i}-\frac{t}{2}(c(e_i)A+Ac(e_i)).\nonumber\\
\end{equation}
Let $\Delta^j$ be the Laplacian with respect to $\nabla^j$ for $j=1,2$: $$\Delta^j:=-\nabla^j_{e_i}\nabla^j_{e_i}+\nabla^j_{\nabla^L_{e_i}e_i}.$$
Then we have the following theorem,
\begin{thm}\label{thm1} The following equalities hold:
\begin{align}
\label{a11}
{D_t}^*D_t&=\Delta^1
-\frac{1}{8}\sum_{ijkl}R_{ijkl}\widehat{c}(e_i)\widehat{c}(e_j)
c(e_k)c(e_l)+\frac{1}{4}K+\frac{1}{4}\sum_{j}(tc(e_{j})A+\overline{t}Ac(e_j))^2\nonumber\\
&+\frac{1}{2}(tc(e_{j})(\nabla^{\bigwedge^*T^*M}_{e_j}A)-\overline{t}(\nabla^{\bigwedge^*T^*M}_{e_j}A)c(e_j))+\overline{t}tA^2,\nonumber\\
\end{align}
\begin{align}
\label{a101}
{D_t}^2&=\Delta^2-
\frac{1}{8}\sum_{ijkl}R_{ijkl}\widehat{c}(e_i)\widehat{c}(e_j)
c(e_k)c(e_l)+\frac{1}{4}K+\frac{1}{4}\sum_{j}t^2(c(e_{j})A+Ac(e_j))^2\nonumber\\
&+\frac{t}{2}(c(e_{j})(\nabla^{\bigwedge^*T^*M}_{e_j}A)-(\nabla^{\bigwedge^*T^*M}_{e_j}A)c(e_j))+t^2A^2,\nonumber\\
\end{align}
where $K$ is the scalar curvature.
\end{thm}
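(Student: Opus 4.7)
The plan is to treat $D_t$ and $D_t^*$ as perturbations of $D_0:=d+\delta$ by the zeroth-order operator $A:=\sum_{i,\alpha}c(e_i)\widehat{c}(S(e_i)f_\alpha)\widehat{c}(f_\alpha)$ and follow the classical Lichnerowicz derivation, with $\nabla^1$ and $\nabla^2$ chosen exactly so as to absorb the first-order cross terms. By (\ref{a5}) one writes $D_0=\sum_i c(e_i)\nabla^{\bigwedge^*T^*M}_{e_i}$, where $\nabla^{\bigwedge^*T^*M}$ is the Levi-Civita connection lifted to $\bigwedge^*T^*M\otimes\mathbb{C}$. A preliminary observation is that $A^*=A$: from $c(e_i)^*=-c(e_i)$, $\widehat{c}(\cdot)^*=\widehat{c}(\cdot)$, the anticommutation $c(e_i)\widehat{c}(e_j)+\widehat{c}(e_j)c(e_i)=0$ of (\ref{a4}), and the fact that $S$ swaps $F$ and $F^\bot$ (so that $\langle f_\alpha,S(e_i)f_\alpha\rangle=0$ and $\widehat{c}(f_\alpha)$ anticommutes with $\widehat{c}(S(e_i)f_\alpha)$), one obtains $A^*=A$ by a short computation. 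Consequently $D_t^*=D_0+\overline{t}\,A$.

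Expanding compositions gives $D_t^*D_t=D_0^2+tD_0A+\overline{t}\,AD_0+\overline{t}tA^2$ and $D_t^2=D_0^2+tD_0A+tAD_0+t^2A^2$. For the unperturbed square apply the classical Weitzenb\"ock formula on $\bigwedge^*T^*M$,
$$D_0^2=\Delta^{\bigwedge^*T^*M}+\tfrac{1}{4}K-\tfrac{1}{8}\sum_{ijkl}R_{ijkl}\widehat{c}(e_i)\widehat{c}(e_j)c(e_k)c(e_l),$$
with $\Delta^{\bigwedge^*T^*M}:=-\sum_i(\nabla^{\bigwedge^*T^*M}_{e_i})^2+\nabla^{\bigwedge^*T^*M}_{\nabla^L_{e_i}e_i}$ the ordinary Bochner Laplacian (this is the case $t=0$ of the formula being proved). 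The Leibniz rule gives $D_0A=\sum_i c(e_i)A\,\nabla^{\bigwedge^*T^*M}_{e_i}+\sum_i c(e_i)(\nabla^{\bigwedge^*T^*M}_{e_i}A)$, so the first-order part of $tD_0A+\overline{t}\,AD_0$ is exactly $\sum_i B_i\,\nabla^{\bigwedge^*T^*M}_{e_i}$ with $B_i:=tc(e_i)A+\overline{t}\,Ac(e_i)$; this is precisely the perturbation appearing in $\nabla^1_{e_i}=\nabla^{\bigwedge^*T^*M}_{e_i}-\tfrac12 B_i$, which is the whole motivation for the choice of $\nabla^1$.

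A direct expansion of $\Delta^1=-\sum_i\nabla^1_{e_i}\nabla^1_{e_i}+\nabla^1_{\nabla^L_{e_i}e_i}$ yields
$$\Delta^1=\Delta^{\bigwedge^*T^*M}+\sum_i B_i\,\nabla^{\bigwedge^*T^*M}_{e_i}+\tfrac12\sum_i\bigl(\nabla^{\bigwedge^*T^*M}_{e_i}B_i\bigr)-\tfrac14\sum_i B_i^2-\tfrac12\bigl(tc(\nabla^L_{e_i}e_i)A+\overline{t}\,Ac(\nabla^L_{e_i}e_i)\bigr).$$
Using this identity to eliminate $\Delta^{\bigwedge^*T^*M}+\sum_i B_i\nabla^{\bigwedge^*T^*M}_{e_i}$ from the expansion of $D_t^*D_t$, the Leibniz identity $\nabla^{\bigwedge^*T^*M}_{e_i}B_i=tc(\nabla^L_{e_i}e_i)A+tc(e_i)(\nabla^{\bigwedge^*T^*M}_{e_i}A)+\overline{t}(\nabla^{\bigwedge^*T^*M}_{e_i}A)c(e_i)+\overline{t}\,Ac(\nabla^L_{e_i}e_i)$ shows that the Christoffel-type pieces $c(\nabla^L_{e_i}e_i)A$ and $Ac(\nabla^L_{e_i}e_i)$ cancel pairwise against the last correction term in $\Delta^1$, and the surviving zeroth-order contribution $-\tfrac12\sum_i[tc(e_i)(\nabla^{\bigwedge^*T^*M}_{e_i}A)+\overline{t}(\nabla^{\bigwedge^*T^*M}_{e_i}A)c(e_i)]$ combines with the leftover $t\sum_i c(e_i)(\nabla^{\bigwedge^*T^*M}_{e_i}A)$ from the Leibniz expansion of $D_0A$ to produce exactly $\tfrac12[tc(e_j)(\nabla^{\bigwedge^*T^*M}_{e_j}A)-\overline{t}(\nabla^{\bigwedge^*T^*M}_{e_j}A)c(e_j)]$. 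Together with the $\tfrac14\sum_j(tc(e_j)A+\overline{t}\,Ac(e_j))^2$ term, the scalar-curvature and double-Clifford terms from the Weitzenb\"ock formula, and the $\overline{t}tA^2$ contribution, this yields (\ref{a11}). Identity (\ref{a101}) follows by the identical computation with $\overline{t}$ replaced by $t$ and $\nabla^1$ replaced by $\nabla^2$.

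The main technical obstacle is the sign bookkeeping in this last step. One must track the asymmetry between $tD_0A$, which produces an extra $tc(e_i)(\nabla A)$ term from the Leibniz rule, and $\overline{t}\,AD_0$, which does not, because $A$ sits to the left of $\nabla$; verify that the Christoffel-type contributions coming from $\nabla^{\bigwedge^*T^*M}_{e_i}B_i$ exactly cancel those produced by $\nabla^1_{\nabla^L_{e_i}e_i}$; and align the remaining $(\nabla A)$-type zeroth-order pieces with the leftover from the cross-term expansion. The computation is purely algebraic, but producing the specific combination $tc(e_j)(\nabla A)-\overline{t}(\nabla A)c(e_j)$ with coefficient $\tfrac12$ requires careful attention at every step and is where a mistake is most likely to occur.
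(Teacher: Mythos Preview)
Your argument is correct and follows essentially the same route as the paper: expand $D_t^*D_t=(d+\delta)^2+t(d+\delta)A+\overline{t}A(d+\delta)+\overline{t}tA^2$, apply the classical Lichnerowicz/Weitzenb\"ock identity to $(d+\delta)^2$, compute the cross terms via the Leibniz rule, and compare with the expansion of $\Delta^1$. The only difference is that the paper works in normal coordinates at a point (so $\nabla^L_{e_i}e_i=0$ there and the Christoffel-type pieces never appear), whereas you keep the $\nabla^1_{\nabla^L_{e_i}e_i}$ term and verify the cancellation explicitly; both lead to the same conclusion.
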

\begin{proof}
\indent From (\ref{a8}) and (\ref{a9}),
\begin{align}
\label{a12}
{D_t}^*D_t=(d+\delta)^2+t(d+\delta)\circ A+\overline{t}A\circ(d+\delta)+t\overline{t}A^2.\nonumber\\
\end{align}
By Lichnerowicz formulas,
\begin{align}
\label{a13}
(d+\delta)^2=\Delta^{\bigwedge^*T^*M}-\frac{1}{8}\sum_{ijkl}R_{ijkl}\widehat{c}(e_i)\widehat{c}(e_j)
c(e_k)c(e_l)+\frac{1}{4}K.\nonumber\\
\end{align}
Using normal coordinates, $\Delta^{\bigwedge^*T^*M}=-\nabla^{\bigwedge^*T^*M}_{e_i}\nabla^{\bigwedge^*T^*M}_{e_i}$. Since $d+\delta=c(e_i)\nabla^{\bigwedge^*T^*M}_{e_i},$ we have
\begin{align}
\label{a14}
(d+\delta)\circ A=c(e_i)A\nabla^{\bigwedge^*T^*M}_{e_i}+c(e_i)(\nabla^{\bigwedge^*T^*M}_{e_i}A),~~~A\circ(d+\delta)=A\circ c(e_i)\nabla^{\bigwedge^*T^*M}_{e_i}.\nonumber\\
\end{align}
Notice that
\begin{align}
\label{a15}
\Delta^1&=-\nabla^1_{e_i}\nabla^1_{e_i}=-(\nabla^{\bigwedge^*T^*M}_{e_i}-\frac{1}{2}(tc(e_i)A+\overline{t}Ac(e_i)))^2=-\nabla^{\bigwedge^*T^*M}_{e_i}\nabla^{\bigwedge^*T^*M}_{e_i}\nonumber\\
&+(tc(e_i)A+\overline{t}Ac(e_i))\nabla^{\bigwedge^*T^*M}_{e_i}+\frac{1}{2}(\nabla^{\bigwedge^*T^*M}_{e_i}(tc(e_i)A+\overline{t}Ac(e_i)))-\frac{1}{4}(tc(e_i)A+\overline{t}Ac(e_i))^2.\nonumber\\
\end{align}
Thus we obtain (\ref{a11}). The proof of (\ref{a101}) is similar.
\end{proof}
By Theorem \ref{thm1}, we can define that
\begin{align}
\label{a20}
E_{{D_t}^*D_t}(x_0)&=\frac{1}{8}\sum_{ijkl}R_{ijkl}\widehat{c}(e_i)\widehat{c}(e_j)
c(e_k)c(e_l)-\frac{1}{4}K-\overline{t}tA^2-\frac{1}{4}\sum_{j}[c(e_{j})tA+\overline{t}Ac(e_j)]^2\nonumber\\
&-\frac{1}{2}[tc(e_{j})(\nabla^{\bigwedge^*T^*M}_{e_j}A)-\overline{t}(\nabla^{\bigwedge^*T^*M}_{e_j}A)c(e_j)].\nonumber\\
\end{align}
\begin{align}
\label{a21}
E_{{D_t}^2}(x_0)&=\frac{1}{8}\sum_{ijkl}R_{ijkl}\widehat{c}(e_i)\widehat{c}(e_j)
c(e_k)c(e_l)-\frac{1}{4}K-t^2A^2-\frac{1}{4}\sum_{j}[c(e_{j})tA+tAc(e_j)]^2\nonumber\\
&+\frac{1}{2}[t(\nabla^{\bigwedge^*T^*M}_{e_j}A)c(e_j)-tc(e_{j})(\nabla^{\bigwedge^*T^*M}_{e_j}A)].\nonumber\\
\end{align}
\indent From \cite{Ac}, we know that the noncommutative residue of operator of Laplace type $\overline{\Delta}$ is expressed as
\begin{equation}
\label{a22}
(n-2)\Phi_{2}(\overline{\Delta})=(4\pi)^{-\frac{n}{2}}\Gamma(\frac{n}{2}){\rm Wres}(\overline{\Delta}^{-\frac{n}{2}+1}),
\end{equation}
where $\Phi_{2}(\overline{\Delta})$ denotes the integral over the diagonal part of the second
coefficient of the heat kernel expansion of $\overline{\Delta}$ and ${\rm Wres}$ denotes the noncommutative residue.\\
\indent Now let $\overline{\Delta}={D_t}^*{D_t}$. Since ${D_t}^*{D_t}$ is operator of Laplace type, and ${D_t}^*{D_t}=\Delta^1-E_{{D_t}^*D_t}$, then, we have (see \cite{G})
\begin{align}
\label{a23}
{\rm Wres}({D_t}^*D_t)^{-\frac{n-2}{2}}
=\frac{(n-2)(4\pi)^{\frac{n}{2}}}{(\frac{n}{2}-1)!}\int_{M}{\rm tr}(\frac{1}{6}K+E_{{D_t}^*D_t})d{\rm Vol_{M}},
\end{align}
\begin{align}
\label{a24}
{\rm Wres}({D_t}^2)^{-\frac{n-2}{2}}
=\frac{(n-2)(4\pi)^{\frac{n}{2}}}{(\frac{n}{2}-1)!}\int_{M}{\rm tr}(\frac{1}{6}K+E_{{D_t}^2})d{\rm Vol_{M}}.
\end{align}
Next, we need to compute ${\rm tr}(E_{{D_t}^*D_t})$ and ${\rm tr}(E_{{D_t}^2})$.\\
Obviously, we have\\
\begin{align}\label{eq17q}
{\rm tr}\bigg(-\frac{1}{4}K\bigg)=-\frac{1}{4}K{\rm tr}[{\rm \texttt{id}}].\nonumber\\
\end{align}
and
\begin{align}\label{eq17}
&\sum_{ijkl}{\rm tr}[R_{ijkl}\widehat{c}(e_i)\widehat{c}(e_j)
c(e_k)c(e_l)]=0.\nonumber\\
\end{align}
Note that
\begin{align}
\label{a25}
&{\rm tr}[\sum_{i=1}^{n}\sum_{\alpha=1}^{k}c(e_i)\widehat{c}(S(e_i)f_\alpha)\widehat{c}(f_\alpha)]^2=\sum_{i,j,\alpha,\beta}{\rm tr}[c(e_i)\widehat{c}(S(e_i)f_\alpha)\widehat{c}(f_\alpha)c(e_j)\widehat{c}(S(e_j)f_\beta)\widehat{c}(f_\beta)].\nonumber\\
\end{align}
$\mathbf{case(a)}$When $i\neq j$.\\
 By $c(e_i)c(e_j)=-c(e_j)c(e_i),$ $c(e_i)\widehat{c}(S(e_i)f_\alpha)=-\widehat{c}(S(e_i)f_\alpha)c(e_i)$, $c(e_i)\widehat{c}(f_\alpha)=-\widehat{c}(f_\alpha)c(e_i)$ and by ${\rm tr}ab={\rm tr}ba$, we have
\begin{align}
\label{a26}
&\sum_{i,j,\alpha,\beta,i\neq j}{\rm tr}[c(e_i)\widehat{c}(S(e_i)f_\alpha)\widehat{c}(f_\alpha)c(e_j)\widehat{c}(S(e_j)f_\beta)\widehat{c}(f_\beta)]=0.\nonumber\\
\end{align}
$\mathbf{case(b)}$When $i=j, \alpha\neq\beta$.\\
 By $c(e_i)^2=-1,$ $\widehat{c}(f_\alpha)\widehat{c}(f_\beta)=-\widehat{c}(f_\beta)\widehat{c}(f_\alpha),$ $\widehat{c}(f_\alpha)\widehat{c}(S(e_i)f_\alpha)=-\widehat{c}(S(e_i)f_\alpha)\widehat{c}(f_\alpha)$ and by ${\rm tr}ab={\rm tr}ba$, we have
\begin{align}
\label{d1}
&\sum_{i=j,\alpha\neq \beta}{\rm tr}[c(e_i)\widehat{c}(S(e_i)f_\alpha)\widehat{c}(f_\alpha)c(e_j)\widehat{c}(S(e_j)f_\beta)\widehat{c}(f_\beta)]=0.\nonumber\\
\end{align}
$\mathbf{case(c)}$ When $i=j, \alpha=\beta$.\\
By $\widehat{c}(f_\alpha)^2=1,$ we have
\begin{align}
\label{d2}
\sum_{i=j,\alpha=\beta}{\rm tr}[c(e_i)\widehat{c}(S(e_i)f_\alpha)\widehat{c}(f_\alpha)c(e_j)\widehat{c}(S(e_j)f_\beta)\widehat{c}(f_\beta)]&=\sum_{i=1}^{n}\sum_{\alpha=1}^{k}|S(e_i)f_\alpha|^2{\rm tr}[{\rm \texttt{id}}],\nonumber\\
\end{align}
therefore
\begin{align}
\label{d34}
{\rm Tr}A^2=\sum_{i=1}^{n}\sum_{\alpha=1}^{k}|S(e_i)f_\alpha|^2{\rm tr}[{\rm \texttt{id}}].\nonumber\\
\end{align}
Note that
\begin{align}
\label{d6}
&{\rm tr}\sum_j[c(e_j)\sum_{i=1}^{n}\sum_{\alpha=1}^{k}c(e_i)\widehat{c}(S(e_i)f_\alpha)\widehat{c}(f_\alpha)]^2=\sum_{i,j,l,\alpha,\beta}{\rm tr}[c(e_j)c(e_i)\widehat{c}(S(e_i)f_\alpha)\widehat{c}(f_\alpha)c(e_j)c(e_l)\widehat{c}(S(e_l)f_\beta)\widehat{c}(f_\beta)].\nonumber\\
\end{align}
$\mathbf{case(a)}$ When $i=j=l$.\\
\begin{align}
\label{d7}
\sum_{i=j=l,\alpha,\beta}{\rm tr}[c(e_j)c(e_i)\widehat{c}(S(e_i)f_\alpha)\widehat{c}(f_\alpha)c(e_j)c(e_l)\widehat{c}(S(e_l)f_\beta)\widehat{c}(f_\beta)]&=-\sum_{i=1}^{n}\sum_{\alpha=1}^{k}{\rm tr}[\widehat{c}(S(e_i)f_\alpha)\widehat{c}(S(e_i)f_\alpha)]\nonumber\\
&=-\sum_{i=1}^{n}\sum_{\alpha=1}^{k}|S(e_i)f_\alpha|^2{\rm tr}[{\rm \texttt{id}}].\nonumber\\
\end{align}
$\mathbf{case(b)}$ When $i=j\neq l$.\\
By $\widehat{c}(f_\alpha)\widehat{c}(f_\beta)=-\widehat{c}(f_\beta)\widehat{c}(f_\alpha),$ $\widehat{c}(f_\alpha)\widehat{c}(S(e_i)f_\alpha)=-\widehat{c}(S(e_i)f_\alpha)\widehat{c}(f_\alpha)$ and by ${\rm tr}ab={\rm tr}ba$, we have
\begin{align}
\label{1d7}
&\sum_{i=j\neq l,\alpha,\beta}{\rm tr}[c(e_j)c(e_i)\widehat{c}(S(e_i)f_\alpha)\widehat{c}(f_\alpha)c(e_j)c(e_l)\widehat{c}(S(e_l)f_\beta)\widehat{c}(f_\beta)]\nonumber\\
&=-\sum_{i\neq l,\alpha,\beta}{\rm tr}[\widehat{c}(S(e_i)f_\alpha)\widehat{c}(f_\alpha)c(e_i)c(e_l)\widehat{c}(S(e_l)f_\beta)\widehat{c}(f_\beta)]\nonumber\\
&=0.\nonumber\\
\end{align}
$\mathbf{case(c)}$ When $i\neq j, i=l$.\\
\begin{align}
\label{d8}
\sum_{i=l\neq j,\alpha,\beta}{\rm tr}[c(e_j)c(e_i)\widehat{c}(S(e_i)f_\alpha)\widehat{c}(f_\alpha)c(e_j)c(e_l)\widehat{c}(S(e_l)f_\beta)\widehat{c}(f_\beta)]&=\sum_j\sum_{i=1}^{n}\sum_{\alpha=1}^{k}{\rm tr}[\widehat{c}(S(e_i)f_\alpha)\widehat{c}(S(e_i)f_\alpha)]\nonumber\\
&=(n-1)\sum_{i=1}^{n}\sum_{\alpha=1}^{k}|S(e_i)f_\alpha|^2{\rm tr}[{\rm \texttt{id}}].\nonumber\\
\end{align}
$\mathbf{case(d)}$ When $i\neq j, i\neq l, j\neq l$ and $i\neq j=l$ .\\
By $\widehat{c}(f_\alpha)\widehat{c}(f_\beta)=-\widehat{c}(f_\beta)\widehat{c}(f_\alpha),$ $\widehat{c}(f_\alpha)\widehat{c}(S(e_i)f_\alpha)=-\widehat{c}(S(e_i)f_\alpha)\widehat{c}(f_\alpha)$ and by ${\rm tr}ab={\rm tr}ba$, we have
\begin{align}
\label{d9}
\sum_{i\neq j,l,\alpha,\beta}{\rm
tr}[c(e_j)c(e_i)\widehat{c}(S(e_i)f_\alpha)\widehat{c}(f_\alpha)c(e_j)c(e_l)\widehat{c}(S(e_l)f_\beta)\widehat{c}(f_\beta)]&=0,\nonumber\\ \end{align}
therefore
\begin{align}
\label{d45}
{\rm Tr}\sum_j[c(e_j)A]^2
=(n-2)\sum_{i=1}^{n}\sum_{\alpha=1}^{k}|S(e_i)f_\alpha|^2{\rm tr}[{\rm \texttt{id}}].\nonumber\\
\end{align}
Note that
\begin{align}
\label{d10}
{\rm tr}[\sum_jc(e_{j})\nabla^{\bigwedge^*T^*M}_{e_j}(\sum_{i=1}^{n}\sum_{\alpha=1}^{k}c(e_i)\widehat{c}(S(e_i)f_\alpha)\widehat{c}(f_\alpha))]&=\sum_j\sum_{i=1}^{n}\sum_{\alpha=1}^{k}{\rm tr}[c(e_{j})\nabla^{\bigwedge^*T^*M}_{e_j}(c(e_i))\widehat{c}(S(e_i)f_\alpha)\widehat{c}(f_\alpha)]\nonumber\\
&+\sum_j\sum_{i=1}^{n}\sum_{\alpha=1}^{k}{\rm tr}[c(e_{j})c(e_i)\nabla^{\bigwedge^*T^*M}_{e_j}(\widehat{c}(S(e_i)f_\alpha))\widehat{c}(f_\alpha)]\nonumber\\
&+\sum_j\sum_{i=1}^{n}\sum_{\alpha=1}^{k}{\rm tr}[c(e_{j})c(e_i)\widehat{c}(S(e_i)f_\alpha)\nabla^{\bigwedge^*T^*M}_{e_j}(\widehat{c}(f_\alpha))],\nonumber\\
\end{align}
by
\begin{align}
\label{d11}
&\nabla^{\bigwedge^*T^*M}_{e_j}(c(e_i))=c(\nabla^L_{e_j}e_i),~~~\nabla^{\bigwedge^*T^*M}_{e_j}(\widehat{c}(S(e_i)f_\alpha))=\widehat{c}(\nabla^L_{e_j}(S(e_i)f_\alpha)),\nonumber\\
&\nabla^{\bigwedge^*T^*M}_{e_j}(\widehat{c}(f_\alpha))=\widehat{c}(\nabla^L_{e_j}f_\alpha),~~~c(e_j)c(\nabla^L_{e_j}e_i)+c(\nabla^L_{e_j}e_i)c(e_j)=-2g^{TM}(e_j,\nabla^L_{e_j}e_i),\nonumber\\
\end{align}
and ${\rm tr}ab={\rm tr}ba$ and $\widehat{c}(S(e_i)f_\alpha)\widehat{c}(f_\alpha)+\widehat{c}(f_\alpha)\widehat{c}(S(e_i)f_\alpha)=2g^{TM}(f_\alpha,S(e_i)f_\alpha)=0$, we have\\
\begin{align}
\label{d13}
&\sum_j\sum_{i=1}^{n}\sum_{\alpha=1}^{k}{\rm tr}[c(e_{j})\nabla^{\bigwedge^*T^*M}_{e_j}(c(e_i))\widehat{c}(S(e_i)f_\alpha)\widehat{c}(f_\alpha)]=0.
\end{align}
Similarly,
\begin{align}
\label{d14}
&\sum_j\sum_{i=1}^{n}\sum_{\alpha=1}^{k}{\rm tr}[c(e_{j})c(e_i)\nabla^{\bigwedge^*T^*M}_{e_j}(\widehat{c}(S(e_i)f_\alpha))\widehat{c}(f_\alpha)]=-\sum_{i=1}^{n}\sum_{\alpha=1}^{k}g^{TM}(f_\alpha,\nabla^L_{e_i}(S(e_i)f_\alpha)){\rm tr}[{\rm \texttt{id}}],\nonumber\\
\end{align}
\begin{align}
\label{d15}
&\sum_j\sum_{i=1}^{n}\sum_{\alpha=1}^{k}{\rm tr}[c(e_{j})c(e_i)\widehat{c}(S(e_i)f_\alpha)\widehat{c}(\nabla^L_{e_j}f_\alpha)]=-\sum_{i=1}^{n}\sum_{\alpha=1}^{k}g^{TM}(\nabla^L_{e_i}f_\alpha,S(e_i)f_\alpha)]{\rm tr}[{\rm \texttt{id}}],\nonumber\\
\end{align}
therefore,
\begin{align}
\label{d16}
&{\rm Tr}[\sum_jc(e_j)\nabla^{\bigwedge^*T^*M}_{e_j}A]=-\sum_j\sum_{i=1}^{n}\sum_{\alpha=1}^{k}[g^{TM}(f_\alpha,\nabla^L_{e_i}(S(e_i)f_\alpha))+g^{TM}(\nabla^L_{e_i}f_\alpha,S(e_i)f_\alpha)]{\rm tr}[{\rm \texttt{id}}]=0.
\end{align}
Then, by (\ref{eq17q})-(\ref{d16}), we get
\begin{align}
\label{a27}
{\rm tr}(E_{{D_t}^*D_t})&=\bigg(-\frac{K}{4}-\frac{1}{4}[(t^2+\overline{t}^2)(n-2)-2n\overline{t}t+4t\overline{t}]\sum_{i=1}^{n}\sum_{\alpha=1}^{k}|S(e_i)f_\alpha|^2\bigg){\rm tr}[{\rm \texttt{id}}],\\
{\rm tr}(E_{{D_t}^2})&=\bigg(-\frac{K}{4}\bigg){\rm tr}[{\rm \texttt{id}}].
\end{align}
Then, by (\ref{a23}) and (\ref{a24}), we have the following theorem,
\begin{thm}\label{thm2} If $M$ is a $n$-dimensional compact oriented manifolds without boundary, and $n$ is even, then we get the following equalities :
\begin{align}
\label{a28}
&{\rm Wres}({D_t}^*D_t)^{-\frac{n-2}{2}}\nonumber\\
&=\frac{(n-2)(4\pi)^{\frac{n}{2}}}{(\frac{n}{2}-1)!}\int_{M}2^n\bigg(
-\frac{1}{12}K-\frac{1}{4}[(t^2+\overline{t}^2)(n-2)-2n\overline{t}t+4t\overline{t}]\sum_{i=1}^{n}\sum_{\alpha=1}^{k}|S(e_i)f_\alpha|^2\bigg)d{\rm Vol_{M}},
\end{align}
\begin{align}
\label{a29}
{\rm Wres}({D_t}^2)^{-\frac{n-2}{2}}
&=\frac{(n-2)(4\pi)^{\frac{n}{2}}}{(\frac{n}{2}-1)!}\int_{M}2^n\bigg(
-\frac{1}{12}K\bigg)d{\rm Vol_{M}},
\end{align}
where $K$ is the scalar curvature.
\end{thm}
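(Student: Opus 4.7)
The plan is to derive Theorem \ref{thm2} essentially as an algebraic reassembly of identities the preceding development has already put in place. Since $M$ is boundaryless, equations (\ref{a23}) and (\ref{a24}) reduce the Wodzicki residues $\widetilde{{\rm Wres}}({D_t}^*D_t)^{-(n-2)/2}$ and $\widetilde{{\rm Wres}}(D_t^2)^{-(n-2)/2}$ to integrals of $\mathrm{tr}(\tfrac{K}{6}+E)$ over $M$, where $E$ is the zeroth-order endomorphism supplied by Theorem \ref{thm1} via (\ref{a20}) and (\ref{a21}). The task is therefore just to evaluate these pointwise traces and bundle the constants correctly.

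First, I would combine the scalar-curvature contributions. From (\ref{a27}), $\mathrm{tr}(E_{{D_t}^*D_t})$ contains a term $-\tfrac{K}{4}\,\mathrm{tr}[\mathrm{id}]$, while (\ref{a23}) contributes $\tfrac{K}{6}\,\mathrm{tr}[\mathrm{id}]$; using $\mathrm{tr}[\mathrm{id}]=\dim(\bigwedge^{*}T^{*}M\otimes\mathbb{C})=2^{n}$ and the numerical identity $\tfrac{1}{6}-\tfrac{1}{4}=-\tfrac{1}{12}$, these combine to the coefficient $-\tfrac{K}{12}\cdot 2^{n}$ appearing in (\ref{a28}). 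Next, the $A$-dependent part of $\mathrm{tr}(E_{{D_t}^*D_t})$ is already packaged in (\ref{a27}) as $-\tfrac14[(t^{2}+\overline{t}^{2})(n-2)-2n\overline{t}t+4t\overline{t}]\sum_{i,\alpha}|S(e_i)f_\alpha|^{2}$, which is transported verbatim into (\ref{a28}) after multiplication by $2^{n}$.

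The formula (\ref{a29}) for $\widetilde{{\rm Wres}}(D_t^{2})^{-(n-2)/2}$ follows by the same mechanism, using the second line of (\ref{a27}): the $A$-dependent contributions in $E_{D_t^{2}}$ cancel under the trace, because $-t^{2}\,\mathrm{Tr}(A^{2})$ is exactly compensated by $-\tfrac{t^{2}}{4}\sum_{j}\mathrm{tr}[(c(e_{j})A+Ac(e_{j}))^{2}]$. Concretely, expanding the square and using the cyclic property together with the identities $\mathrm{Tr}(A^{2})=\sum|S(e_i)f_\alpha|^{2}\,\mathrm{tr}[\mathrm{id}]$ from (\ref{d34}) and $\mathrm{Tr}\sum_{j}(c(e_j)A)^{2}=(n-2)\sum|S(e_i)f_\alpha|^{2}\,\mathrm{tr}[\mathrm{id}]$ from (\ref{d45}), the net $A$-contribution vanishes and only the $-\tfrac{K}{4}\,\mathrm{tr}[\mathrm{id}]$ term survives, which combines with $\tfrac{K}{6}\,\mathrm{tr}[\mathrm{id}]$ as before.

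I do not anticipate any genuine obstacle: the substantive work has already been carried out in the case analyses (\ref{a25})--(\ref{d16}), which evaluate $\mathrm{Tr}(A^{2})$, $\mathrm{Tr}\sum_{j}(c(e_j)A)^{2}$, and $\mathrm{Tr}[\sum_{j}c(e_j)\nabla^{\bigwedge^{*}T^{*}M}_{e_j}A]$. Once these are in hand, Theorem \ref{thm2} is a routine consequence of Ackermann's identity (\ref{a22}) together with the dimension count $\mathrm{tr}[\mathrm{id}]=2^{n}$; the only point requiring care is a correct bookkeeping of the coefficient $(t^{2}+\overline{t}^{2})(n-2)-2n\overline{t}t+4t\overline{t}$, which one verifies by matching the $-\overline{t}t\,\mathrm{Tr}(A^{2})$ term with the mixed expansion of $-\tfrac14\sum_{j}(tc(e_j)A+\overline{t}Ac(e_j))^{2}$.
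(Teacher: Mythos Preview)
Your proposal is correct and follows the same approach as the paper: the theorem is stated there as an immediate consequence of substituting the trace formulas (\ref{a27}) into (\ref{a23}) and (\ref{a24}), using $\mathrm{tr}[\mathrm{id}]=2^n$ and combining $\tfrac{1}{6}-\tfrac{1}{4}=-\tfrac{1}{12}$. Your write-up simply unpacks this substitution in slightly more detail than the paper does.
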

\section{A Kastler-Kalau-Walze type theorem for $4$-dimensional manifolds with boundary}
\label{section:3}
 In this section, we prove the Kastler-Kalau-Walze type theorem for $4$-dimensional oriented compact manifolds with boundary. We firstly recall some basic facts and formulas about Boutet de
Monvel's calculus and the definition of the noncommutative residue for manifolds with boundary which will be used in the following. For more details, see Section 2 in \cite{Wa3}.\\
 \indent Let $M$ be a 4-dimensional compact oriented manifold with boundary $\partial M$.
We assume that the metric $g^{TM}$ on $M$ has the following form near the boundary,
\begin{equation}
\label{b1}
g^{TM}=\frac{1}{h(x_{n})}g^{\partial M}+dx _{n}^{2},
\end{equation}
where $g^{\partial M}$ is the metric on $\partial M$ and $h(x_n)\in C^{\infty}([0, 1)):=\{\widehat{h}|_{[0,1)}|\widehat{h}\in C^{\infty}((-\varepsilon,1))\}$ for
some $\varepsilon>0$ and $h(x_n)$ satisfies $h(x_n)>0$, $h(0)=1$ where $x_n$ denotes the normal directional coordinate. Let $U\subset M$ be a collar neighborhood of $\partial M$ which is diffeomorphic with $\partial M\times [0,1)$. By the definition of $h(x_n)\in C^{\infty}([0,1))$
and $h(x_n)>0$, there exists $\widehat{h}\in C^{\infty}((-\varepsilon,1))$ such that $\widehat{h}|_{[0,1)}=h$ and $\widehat{h}>0$ for some
sufficiently small $\varepsilon>0$. Then there exists a metric $g'$ on $\widetilde{M}=M\bigcup_{\partial M}\partial M\times
(-\varepsilon,0]$ which has the form on $U\bigcup_{\partial M}\partial M\times (-\varepsilon,0 ]$
\begin{equation}
\label{b2}
g'=\frac{1}{\widehat{h}(x_{n})}g^{\partial M}+dx _{n}^{2} ,
\end{equation}
such that $g'|_{M}=g$. We fix a metric $g'$ on the $\widetilde{M}$ such that $g'|_{M}=g$.

Let the Fourier transformation $F'$  be
\begin{equation*}
F':L^2({\bf R}_t)\rightarrow L^2({\bf R}_v);~F'(u)(v)=\int_\mathbb{R} e^{-ivt}u(t)dt
\end{equation*}
and let
\begin{equation*}
r^{+}:C^\infty ({\bf R})\rightarrow C^\infty (\widetilde{{\bf R}^+});~ f\rightarrow f|\widetilde{{\bf R}^+};~
\widetilde{{\bf R}^+}=\{x\geq0;x\in {\bf R}\}.
\end{equation*}
\indent We define $H^+=F'(\Phi(\widetilde{{\bf R}^+}));~ H^-_0=F'(\Phi(\widetilde{{\bf R}^-}))$ which satisfies
$H^+\bot H^-_0$, where $\Phi(\widetilde{{\bf R}^+}) =r^+\Phi({\bf R})$, $\Phi(\widetilde{{\bf R}^-}) =r^-\Phi({\bf R})$ and $\Phi({\bf R})$
denotes the Schwartz space. We have the following
 property: $h\in H^+~$ (resp. $H^-_0$) if and only if $h\in C^\infty({\bf R})$ which has an analytic extension to the lower (resp. upper) complex
half-plane $\{{\rm Im}\xi<0\}$ (resp. $\{{\rm Im}\xi>0\})$ such that for all nonnegative integer $l$,
 \begin{equation*}
\frac{d^{l}h}{d\xi^l}(\xi)\sim\sum^{\infty}_{k=1}\frac{d^l}{d\xi^l}(\frac{c_k}{\xi^k}),
\end{equation*}
as $|\xi|\rightarrow +\infty,{\rm Im}\xi\leq0$ (resp. ${\rm Im}\xi\geq0)$ and where $c_k\in\mathbb{C}$ are some constants.\\
 \indent Let $H'$ be the space of all polynomials and $H^-=H^-_0\bigoplus H';~H=H^+\bigoplus H^-.$ Denote by $\pi^+$ (resp. $\pi^-$) the
 projection on $H^+$ (resp. $H^-$). Let $\widetilde H=\{$rational functions having no poles on the real axis$\}$. Then on $\tilde{H}$,
 \begin{equation}
 \label{b3}
\pi^+h(\xi_0)=\frac{1}{2\pi i}\lim_{u\rightarrow 0^{-}}\int_{\Gamma^+}\frac{h(\xi)}{\xi_0+iu-\xi}d\xi,
\end{equation}
where $\Gamma^+$ is a Jordan closed curve
included ${\rm Im}(\xi)>0$ surrounding all the singularities of $h$ in the upper half-plane and
$\xi_0\in {\bf R}$. In our computations, we only compute $\pi^+h$ for $h$ in $\widetilde{H}$. Similarly, define $\pi'$ on $\tilde{H}$,
\begin{equation}
\label{b4}
\pi'h=\frac{1}{2\pi}\int_{\Gamma^+}h(\xi)d\xi.
\end{equation}
So $\pi'(H^-)=0$. For $h\in H\bigcap L^1({\bf R})$, $\pi'h=\frac{1}{2\pi}\int_{{\bf R}}h(v)dv$ and for $h\in H^+\bigcap L^1({\bf R})$, $\pi'h=0$.\\
\indent An operator of order $m\in {\bf Z}$ and type $d$ is a matrix\\
$$\widetilde{A}=\left(\begin{array}{lcr}
  \pi^+P+G  & K  \\
   T  &  \widetilde{S}
\end{array}\right):
\begin{array}{cc}
\   C^{\infty}(M,E_1)\\
 \   \bigoplus\\
 \   C^{\infty}(\partial{M},F_1)
\end{array}
\longrightarrow
\begin{array}{cc}
\   C^{\infty}(M,E_2)\\
\   \bigoplus\\
 \   C^{\infty}(\partial{M},F_2)
\end{array},
$$
where $M$ is a manifold with boundary $\partial M$ and
$E_1,E_2$~ (resp. $F_1,F_2$) are vector bundles over $M~$ (resp. $\partial M
$).~Here,~$P:C^{\infty}_0(\Omega,\overline {E_1})\rightarrow
C^{\infty}(\Omega,\overline {E_2})$ is a classical
pseudodifferential operator of order $m$ on $\Omega$, where
$\Omega$ is a collar neighborhood of $M$ and
$\overline{E_i}|M=E_i~(i=1,2)$. $P$ has an extension:
$~{\cal{E'}}(\Omega,\overline {E_1})\rightarrow
{\cal{D'}}(\Omega,\overline {E_2})$, where
${\cal{E'}}(\Omega,\overline {E_1})~({\cal{D'}}(\Omega,\overline
{E_2}))$ is the dual space of $C^{\infty}(\Omega,\overline
{E_1})~(C^{\infty}_0(\Omega,\overline {E_2}))$. Let
$e^+:C^{\infty}(M,{E_1})\rightarrow{\cal{E'}}(\Omega,\overline
{E_1})$ denote extension by zero from $M$ to $\Omega$ and
$r^+:{\cal{D'}}(\Omega,\overline{E_2})\rightarrow
{\cal{D'}}(\Omega, {E_2})$ denote the restriction from $\Omega$ to
$X$, then define
$$\pi^+P=r^+Pe^+:C^{\infty}(M,{E_1})\rightarrow {\cal{D'}}(\Omega,
{E_2}).$$ In addition, $P$ is supposed to have the
transmission property; this means that, for all $j,k,\alpha$, the
homogeneous component $p_j$ of order $j$ in the asymptotic
expansion of the
symbol $p$ of $P$ in local coordinates near the boundary satisfies:\\
$$\partial^k_{x_n}\partial^\alpha_{\xi'}p_j(x',0,0,+1)=
(-1)^{j-|\alpha|}\partial^k_{x_n}\partial^\alpha_{\xi'}p_j(x',0,0,-1),$$
then $\pi^+P:C^{\infty}(M,{E_1})\rightarrow C^{\infty}(M,{E_2})$
by Theorem 4 in \cite{RS} page 139. Let $G$,$T$ be respectively the singular Green operator
and the trace operator of order $m$ and type $d$. Let $K$ be a
potential operator and $S$ be a classical pseudodifferential
operator of order $m$ along the boundary. Denote by $B^{m,d}$ the collection of all operators of
order $m$
and type $d$,  and $\mathcal{B}$ is the union over all $m$ and $d$.\\
\indent Recall that $B^{m,d}$ is a Fr\'{e}chet space. The composition
of the above operator matrices yields a continuous map:
$B^{m,d}\times B^{m',d'}\rightarrow B^{m+m',{\rm max}\{
m'+d,d'\}}.$ Write $$\widetilde{A}=\left(\begin{array}{lcr}
 \pi^+P+G  & K \\
 T  &  \widetilde{S}
\end{array}\right)
\in B^{m,d},
 \widetilde{A}'=\left(\begin{array}{lcr}
\pi^+P'+G'  & K'  \\
 T'  &  \widetilde{S}'
\end{array} \right)
\in B^{m',d'}.$$\\
 The composition $\widetilde{A}\widetilde{A}'$ is obtained by
multiplication of the matrices (For more details see \cite{SE}). For
example $\pi^+P\circ G'$ and $G\circ G'$ are singular Green
operators of type $d'$ and
$$\pi^+P\circ\pi^+P'=\pi^+(PP')+L(P,P').$$
Here $PP'$ is the usual
composition of pseudodifferential operators and $L(P,P')$ called
leftover term is a singular Green operator of type $m'+d$. For our case, $P,P'$ are classical pseudo differential operators, in other words $\pi^+P\in \mathcal{B}^{\infty}$ and $\pi^+P'\in \mathcal{B}^{\infty}$ .\\
\indent Let $M$ be a $n$-dimensional compact oriented manifold with boundary $\partial M$.
Denote by $\mathcal{B}$ the Boutet de Monvel's algebra. We recall that the main theorem in \cite{FGLS,Wa3}.
\begin{thm}\label{th:32}{\rm\cite{FGLS}}{\bf(Fedosov-Golse-Leichtnam-Schrohe)}
 Let $M$ and $\partial M$ be connected, ${\rm dim}M=n\geq3$, and let $\widetilde{S}$ (resp. $\widetilde{S}'$) be the unit sphere about $\xi$ (resp. $\xi'$) and $\sigma(\xi)$ (resp. $\sigma(\xi')$) be the corresponding canonical
$n-1$ (resp. $(n-2)$) volume form.
 Set $\widetilde{A}=\left(\begin{array}{lcr}\pi^+P+G &   K \\
T &  \widetilde{S}    \end{array}\right)$ $\in \mathcal{B}$ , and denote by $p$, $b$ and $s$ the local symbols of $P,G$ and $\widetilde{S}$ respectively.
 Define:
 \begin{align}
{\rm{\widetilde{Wres}}}(\widetilde{A})&=\int_X\int_{\bf \widetilde{ S}}{\rm{tr}}_E\left[p_{-n}(x,\xi)\right]\sigma(\xi)dx \nonumber\\
&+2\pi\int_ {\partial X}\int_{\bf \widetilde{S}'}\left\{{\rm tr}_E\left[({\rm{tr}}b_{-n})(x',\xi')\right]+{\rm{tr}}
_F\left[s_{1-n}(x',\xi')\right]\right\}\sigma(\xi')dx',
\end{align}
where ${\rm{\widetilde{Wres}}}$ denotes the noncommutative residue of an operator in the Boutet de Monvel's algebra.\\
Then~~ a) ${\rm \widetilde{Wres}}([\widetilde{A},B])=0 $, for any
$\widetilde{A},B\in\mathcal{B}$;~~ b) It is the unique continuous trace on
$\mathcal{B}/\mathcal{B}^{-\infty}$.
\end{thm}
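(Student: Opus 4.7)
The plan has two halves: (a) prove the trace property $\widetilde{{\rm Wres}}([\widetilde{A},B])=0$, and (b) deduce uniqueness of $\widetilde{{\rm Wres}}$ as a continuous trace on $\mathcal{B}/\mathcal{B}^{-\infty}$. For (a), I would expand the commutator entry-by-entry in the $2\times 2$ matrix formalism. Each of the four entries of $\widetilde{A}B-B\widetilde{A}$ is a sum of compositions of the building blocks $\pi^+P$, $G$, $K$, $T$, $\widetilde{S}$, governed by the composition rule $B^{m,d}\times B^{m',d'}\to B^{m+m',\max\{m'+d,d'\}}$. I would collect contributions to the interior symbol $p_{-n}$ and to the boundary symbols $({\rm tr}\, b)_{-n}$ and $s_{1-n}$ separately. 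The interior piece is, modulo smoothing, the $-n$-homogeneous component of the symbol of $[P,P']$ on the interior of $M$; this integrates to zero over each cosphere fiber by Wodzicki's divergence argument, since any degree $1-n$ homogeneous function on $T^*M\setminus 0$ produces $\xi$-derivatives whose integrals over $\widetilde S$ vanish.

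The harder boundary piece combines (i) the leftover term $L(P,P')-L(P',P)$ coming from $\pi^+P\circ\pi^+P'-\pi^+P'\circ\pi^+P=\pi^+[P,P']+(L(P,P')-L(P',P))$, (ii) the commutators involving $G$, $K$, $T$, and (iii) the commutator of the boundary pseudodifferential parts $\widetilde{S}$. The main obstacle will be to show that the resulting total boundary symbol at order $1-n$ on $T^*\partial M$ can be rewritten as a $\xi'$-divergence of a homogeneous symbol of degree $2-n$, plus normal-covariable integrands that are annihilated by the projector $\pi'$ (i.e.\ lying in $H^+$ or having no residue at infinity in $\xi_n$). Concretely, one rewrites $L(P,P')$ via the Cauchy-type kernel that implements $\pi^+_{\xi_n}$, applies the Leibniz rule in $\xi_n$, and uses the Plemelj-like identities for $\pi^+$ and $\pi'$ to reorganize the contributions so that each remaining piece either integrates to zero on $\widetilde S'$ or cancels against another coming from $[G,\pi^+P']$ or from $[T,K']$.

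For (b), uniqueness proceeds by a two-step reduction. Any continuous trace $\tau$ on $\mathcal{B}/\mathcal{B}^{-\infty}$ first restricts to the subalgebra of classical pseudodifferential operators on the interior of $\widetilde M$ that are compactly supported away from $\partial M$; by Wodzicki's uniqueness theorem on the closed extension, this restriction must be a scalar multiple of the interior piece of $\widetilde{{\rm Wres}}$. Subtracting this multiple, the residual trace vanishes on interior-supported operators and therefore descends to the quotient algebra of boundary symbols, which contains in particular the pseudodifferential operators on $\partial M$ (as the lower-right block). A second application of Wodzicki's uniqueness on $\partial M$ pins down the boundary constant, and density arguments for singular Green, potential and trace operators show that no further freedom remains. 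The one subtlety is matching the interior and boundary constants to a single overall scalar, which is achieved by testing $\tau$ against a carefully chosen family whose interior and boundary symbols are linked through the transmission condition.
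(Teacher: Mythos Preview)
The paper does not give its own proof of this theorem: it is quoted verbatim from \cite{FGLS} (Fedosov--Golse--Leichtnam--Schrohe) and used as a black box, so there is no in-paper argument to compare your proposal against.

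That said, your sketch is a reasonable outline of the strategy in the original \cite{FGLS} paper. The broad architecture for (a)---treating the interior term via Wodzicki's divergence argument on the cosphere bundle, and then showing the boundary contributions from the leftover term $L(P,P')-L(P',P)$, the singular Green commutators, and the $[\widetilde S,\widetilde S']$ piece combine into $\xi'$-divergences or $\pi'$-null terms---matches what FGLS actually do, though their execution relies heavily on the detailed Boutet de Monvel symbol calculus (in particular the normal-variable Wiener--Hopf factorizations) rather than just Plemelj-type identities. For (b), the two-step reduction to Wodzicki uniqueness first on the interior and then on $\partial M$ is again the FGLS approach; the ``matching of constants'' you flag at the end is handled in \cite{FGLS} not by a single test family but by analyzing how the interior residue density degenerates near the boundary and comparing with the leading boundary contribution, which forces the two scalars to agree. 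Your proposal is a fair high-level summary, but be aware that the actual proof is considerably longer and more technical than your sketch suggests, particularly in controlling the singular Green contributions.
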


\begin{defn}\label{def1}{\rm\cite{Wa3} }
Lower dimensional volumes of spin manifolds with boundary are defined by
 \begin{equation}
 \label{b6}
{\rm Vol}^{(p_1,p_2)}_nM:= \widetilde{{\rm Wres}}[\pi^+D^{-p_1}\circ\pi^+D^{-p_2}].
\end{equation}
\end{defn}
 By \cite{Wa3}, we get
\begin{equation}
\label{b7}
\widetilde{{\rm Wres}}[\pi^+D^{-p_1}\circ\pi^+D^{-p_2}]=\int_M\int_{|\xi'|=1}{\rm
trace}_{\wedge^*T^*M\bigotimes\mathbb{C}}[\sigma_{-n}(D^{-p_1-p_2})]\sigma(\xi)dx+\int_{\partial M}\Phi,
\end{equation}
and
\begin{eqnarray}
\label{b8}
\Phi&=\int_{|\xi'|=1}\int^{+\infty}_{-\infty}\sum^{\infty}_{j, k=0}\sum\frac{(-i)^{|\alpha|+j+k+1}}{\alpha!(j+k+1)!}
\times {\rm trace}_{\wedge^*T^*M\bigotimes\mathbb{C}}[\partial^j_{x_n}\partial^\alpha_{\xi'}\partial^k_{\xi_n}\sigma^+_{r}(D^{-p_1})(x',0,\xi',\xi_n)
\nonumber\\
&\times\partial^\alpha_{x'}\partial^{j+1}_{\xi_n}\partial^k_{x_n}\sigma_{l}(D^{-p_2})(x',0,\xi',\xi_n)]d\xi_n\sigma(\xi')dx',
\end{eqnarray}
 where the sum is taken over $r+l-k-|\alpha|-j-1=-n,~~r\leq -p_1,l\leq -p_2$.

 Since $[\sigma_{-n}(D^{-p_1-p_2})]|_M$ has the same expression as $\sigma_{-n}(D^{-p_1-p_2})$ in the case of manifolds without
boundary, so locally we can compute the first term by \cite{KW}, \cite{Ka}, \cite{Po},  \cite{Wa3}.

For any fixed point $x_0\in\partial M$, we choose the normal coordinates
$U$ of $x_0$ in $\partial M$ (not in $M$) and compute $\Phi(x_0)$ in the coordinates $\widetilde{U}=U\times [0,1)\subset M$ and with the
metric $\frac{1}{h(x_n)}g^{\partial M}+dx_n^2.$ The dual metric of $g^{TM}$ on $\widetilde{U}$ is ${h(x_n)}g^{\partial M}+dx_n^2.$  Write
$g^{TM}_{ij}=g^{TM}(\frac{\partial}{\partial x_i},\frac{\partial}{\partial x_j});~ g_{TM}^{ij}=g^{TM}(dx_i,dx_j)$, then
\begin{equation}
\label{b9}
[g^{TM}_{ij}]= \left[\begin{array}{lcr}
  \frac{1}{h(x_n)}[g_{ij}^{\partial M}]  & 0  \\
   0  &  1
\end{array}\right];~~~
[g_{TM}^{ij}]= \left[\begin{array}{lcr}
  h(x_n)[g^{ij}_{\partial M}]  & 0  \\
   0  &  1
\end{array}\right],
\end{equation}
and
\begin{equation}
\label{b10}
\partial_{x_s}g_{ij}^{\partial M}(x_0)=0, 1\leq i,j\leq n-1; ~~~g_{ij}^{TM}(x_0)=\delta_{ij}.
\end{equation}
\indent From \cite{Wa3}, we can get three lemmas.
\begin{lem}{\rm \cite{Wa3}}\label{le:32}
With the metric $g^{TM}$ on $M$ near the boundary
\begin{align}
\label{b11}
\partial_{x_j}(|\xi|_{g^M}^2)(x_0)&=\left\{
       \begin{array}{c}
        0,  ~~~~~~~~~~ ~~~~~~~~~~ ~~~~~~~~~~~~~{\rm if }~j<n, \\[2pt]
       h'(0)|\xi'|^{2}_{g^{\partial M}},~~~~~~~~~~~~~~~~~~~~{\rm if }~j=n,
       \end{array}
    \right. \\
\partial_{x_j}[c(\xi)](x_0)&=\left\{
       \begin{array}{c}
      0,  ~~~~~~~~~~ ~~~~~~~~~~ ~~~~~~~~~~~~~{\rm if }~j<n,\\[2pt]
\partial_{x_n}(c(\xi'))(x_{0}), ~~~~~~~~~~~~~~~~~{\rm if }~j=n,
       \end{array}
    \right.
\end{align}
where $\xi=\xi'+\xi_{n}dx_{n}$.
\end{lem}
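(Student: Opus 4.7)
The plan is to exploit the explicit block-diagonal form of the dual metric given in \eqref{b9}, namely $g_{TM}^{ij}=h(x_n)\,g^{ij}_{\partial M}$ for $i,j<n$, $g_{TM}^{nn}=1$, and $g_{TM}^{in}=0$ for $i<n$. Writing $\xi=\xi'+\xi_n\,dx_n$ yields
\begin{equation*}
|\xi|^2_{g^M}=h(x_n)\,|\xi'|^2_{g^{\partial M}}+\xi_n^2,
\end{equation*}
and from this expression both cases of the first identity fall out immediately. For $j<n$, the prefactor $h(x_n)$ is independent of $x_j$, while the boundary normal-coordinate property $\partial_{x_s}g^{\partial M}_{ij}(x_0)=0$ recorded in \eqref{b10} kills the tangential derivative of $|\xi'|^2_{g^{\partial M}}$ at $x_0$. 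For $j=n$, the $\xi_n^2$ term contributes nothing, and differentiating $h(x_n)$ gives exactly $h'(0)\,|\xi'|^2_{g^{\partial M}}$ as claimed.

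For the second identity, I would split $c(\xi)=c(\xi')+\xi_n\,c(dx_n)$ and pass to a local orthonormal coframe $\{e^1,\ldots,e^n\}$ adapted to the product structure near $\partial M$, with $e^n=dx_n$ (this is legitimate because $g_{TM}^{nn}=1$ identically in the collar) and with $e^\alpha$, $\alpha<n$, tangential. Each $c(dx_i)$ then expands as $\sum_\alpha b^\alpha_i(x)\,c(e^\alpha)$, where the coefficients $b^\alpha_i$ are smooth functions of $g^{\partial M}$ and $h(x_n)$ alone. For $j<n$, every such coefficient has vanishing $x_j$-derivative at $x_0$ (again by \eqref{b10}, together with the fact that $h$ depends only on $x_n$), which forces $\partial_{x_j}[c(\xi)](x_0)=0$.

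For the normal case $j=n$, the key observation is that $c(dx_n)=c(e^n)$ identically on the collar, so $\partial_{x_n}[c(dx_n)]\equiv 0$ there; hence the $\xi_n$-term drops out and
\begin{equation*}
\partial_{x_n}[c(\xi)](x_0)=\partial_{x_n}[c(\xi')](x_0),
\end{equation*}
which is precisely the stated formula. The entire argument is essentially bookkeeping about the metric at $x_0$, so no genuine obstacle is expected; the one point requiring a little care is the clean separation between the covector $\xi$ (whose coefficients $\xi_i$ are constants, not functions of $x$) and the $x$-dependence that enters only through the Clifford multiplication map, so that the boundary normal-coordinate hypothesis \eqref{b10} does all of the work.
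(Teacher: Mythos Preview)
Your argument is correct and complete. Note, however, that the paper does not give its own proof of this lemma: it is quoted verbatim from \cite{Wa3} and used without demonstration, so there is no in-paper proof to compare against. Your approach---writing $|\xi|^2_{g^M}=h(x_n)|\xi'|^2_{g^{\partial M}}+\xi_n^2$ directly from the block form \eqref{b9} and then differentiating, together with the observation that $c(dx_n)=c(e^n)$ is constant in the collar so that only $c(\xi')$ carries $x_n$-dependence---is exactly the standard computation one finds in \cite{Wa3}, and the normal-coordinate condition \eqref{b10} is indeed what kills the tangential derivatives at $x_0$.
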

\begin{lem}{\rm \cite{Wa3}}\label{le:32}With the metric $g^{TM}$ on $M$ near the boundary
\begin{align}
\label{b12}
\omega_{s,t}(e_i)(x_0)&=\left\{
       \begin{array}{c}
        \omega_{n,i}(e_i)(x_0)=\frac{1}{2}h'(0),  ~~~~~~~~~~ ~~~~~~~~~~~{\rm if }~s=n,t=i,i<n, \\[2pt]
       \omega_{i,n}(e_i)(x_0)=-\frac{1}{2}h'(0),~~~~~~~~~~~~~~~~~~~{\rm if }~s=i,t=n,i<n,\\[2pt]
    \omega_{s,t}(e_i)(x_0)=0,~~~~~~~~~~~~~~~~~~~~~~~~~~~other~cases,~~~~~~~~~
       \end{array}
    \right.
\end{align}
where $(\omega_{s,t})$ denotes the connection matrix of Levi-Civita connection $\nabla^L$.
\end{lem}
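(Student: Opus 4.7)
The plan is to prove the lemma by a direct coordinate calculation, using the product-like structure of $g^{TM}$ near $\partial M$ together with the flexibility to pick convenient coordinates on $\partial M$. First I would fix normal coordinates $(x_1,\dots,x_{n-1})$ on $\partial M$ centered at $x_0$ and use $x_n$ for the normal direction, so that on the collar $g^{TM}=\frac{1}{h(x_n)}g^{\partial M}+dx_n^2$. In these coordinates $g^{\partial M}_{ij}(x_0)=\delta_{ij}$ and $\partial_{x_k}g^{\partial M}_{ij}(x_0)=0$ for $k<n$, while $\partial_{x_n}\bigl(\frac{1}{h(x_n)}\bigr)\big|_{x_n=0}=-h'(0)$. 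I would take the orthonormal frame $e_n=\partial_{x_n}$ and $e_i=\sqrt{h(x_n)}\,\partial_{x_i}$ for $i<n$; note that $e_i(x_0)=\partial_{x_i}(x_0)$, so $g^{TM}_{ij}(x_0)=\delta_{ij}$ holds by construction.

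Next I would read off the partial derivatives of $g^{TM}_{ij}$ at $x_0$: the only nonvanishing ones are $\partial_{x_n}g^{TM}_{ij}(x_0)=-h'(0)\delta_{ij}$ for $i,j<n$. Plugging this into the standard formula $\Gamma^k_{ij}(x_0)=\tfrac{1}{2}(\partial_{x_i}g^{TM}_{jk}+\partial_{x_j}g^{TM}_{ik}-\partial_{x_k}g^{TM}_{ij})(x_0)$ gives, after an index check, the only nonvanishing Christoffel symbols
\begin{equation*}
\Gamma^n_{ij}(x_0)=\tfrac{1}{2}h'(0)\delta_{ij}\quad(i,j<n),\qquad \Gamma^j_{ni}(x_0)=\Gamma^j_{in}(x_0)=-\tfrac{1}{2}h'(0)\delta_{ij}\quad(i,j<n).
\end{equation*}

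Now I would use the convention forced on us by (\ref{a2}), namely $\nabla^L_X e_t=\sum_s \omega_{s,t}(X)\,e_s$, equivalently $\omega_{s,t}(X)=g^{TM}(\nabla^L_X e_t,e_s)$. For $i<n$, expand $\nabla^L_{e_i}e_i$ at $x_0$: since $e_i=\sqrt{h(x_n)}\partial_{x_i}$ and $\partial_{x_i}\sqrt{h(x_n)}=0$ for $i<n$, one gets $\nabla^L_{e_i}e_i(x_0)=\Gamma^n_{ii}(x_0)\,\partial_{x_n}=\tfrac{1}{2}h'(0)\,e_n$, which yields $\omega_{n,i}(e_i)(x_0)=\tfrac{1}{2}h'(0)$. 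The second formula $\omega_{i,n}(e_i)(x_0)=-\tfrac{1}{2}h'(0)$ follows from the antisymmetry $\omega_{s,t}=-\omega_{t,s}$ of the connection matrix in an orthonormal frame.

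Finally, for the ``other cases'', I would verify the vanishing by a short case analysis: (a) $\omega_{i,j}(e_k)(x_0)$ with $i,j,k<n$ vanishes because $\nabla^L_{e_k}e_j(x_0)=\tfrac{1}{2}h'(0)\delta_{kj}e_n$ has no tangential component; (b) $\omega_{n,i}(e_j)(x_0)$ for $j<n,\,j\neq i$ vanishes by the same formula; (c) $\omega_{n,i}(e_n)(x_0)$ vanishes because the derivative-of-$\sqrt{h}$ contribution $\tfrac{1}{2}h'(0)\partial_{x_i}$ exactly cancels against $\sqrt{h(0)}\,\Gamma^i_{ni}(x_0)\partial_{x_i}=-\tfrac{1}{2}h'(0)\partial_{x_i}$, giving $\nabla^L_{e_n}e_i(x_0)=0$. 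The main obstacle is simply the careful bookkeeping of sign conventions and the index cases, but each computation is elementary once the Christoffel symbols have been tabulated.
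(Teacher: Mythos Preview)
Your proof is correct. The paper does not actually prove this lemma---it is quoted from \cite{Wa3} without argument---and your direct computation (choosing normal coordinates on $\partial M$, the orthonormal frame $e_i=\sqrt{h(x_n)}\,\partial_{x_i}$, $e_n=\partial_{x_n}$, tabulating the Christoffel symbols, and then reading off $\omega_{s,t}(e_i)=g^{TM}(\nabla^L_{e_i}e_t,e_s)$) is exactly the standard approach one would expect in \cite{Wa3}; note too that your intermediate Christoffel-symbol values reproduce Lemma~\ref{lem1} of the present paper, which is a good consistency check.
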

\begin{lem}{\rm \cite{Wa3}}\label{lem1} When $i<n,$ then
\begin{align}
\label{b13}
\Gamma_{st}^k(x_0)&=\left\{
       \begin{array}{c}
        \Gamma^n_{ii}(x_0)=\frac{1}{2}h'(0),~~~~~~~~~~ ~~~~~~~~~~~{\rm if }~s=t=i,k=n, \\[2pt]
        \Gamma^i_{ni}(x_0)=-\frac{1}{2}h'(0),~~~~~~~~~~~~~~~~~~~{\rm if }~s=n,t=i,k=i,\\[2pt]
        \Gamma^i_{in}(x_0)=-\frac{1}{2}h'(0),~~~~~~~~~~~~~~~~~~~{\rm if }~s=i,t=n,k=i,\\[2pt]
       \end{array}
    \right.
\end{align}
in other cases, $\Gamma_{st}^i(x_0)=0$.
\end{lem}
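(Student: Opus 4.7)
The plan is to compute each Christoffel symbol at $x_0$ directly from the Koszul formula
$$\Gamma^k_{st} = \frac{1}{2}\, g_{TM}^{kl}\bigl(\partial_{x_s} g^{TM}_{tl} + \partial_{x_t} g^{TM}_{sl} - \partial_{x_l} g^{TM}_{st}\bigr),$$
combined with the explicit block form of the metric recorded in (\ref{b9}).

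First I would determine the only non-vanishing first derivatives of $g^{TM}_{ij}$ at $x_0$. From (\ref{b9}) one has $g^{TM}_{ij}(x) = \frac{1}{h(x_n)}\, g^{\partial M}_{ij}(x')$ for $i,j<n$, while $g^{TM}_{nn}\equiv 1$ and $g^{TM}_{in}\equiv 0$ for $i<n$. Invoking (\ref{b10}), which gives $\partial_{x_s} g^{\partial M}_{ij}(x_0)=0$ for all tangential directions $s<n$, together with $g^{\partial M}_{ij}(x_0)=\delta_{ij}$ and $h(0)=1$, one finds that $\partial_{x_l} g^{TM}_{ij}(x_0)=0$ for every $l<n$, that all derivatives of $g^{TM}_{nn}$ and of $g^{TM}_{in}$ (with $i<n$) vanish, and that the only surviving derivative is
$$\partial_{x_n} g^{TM}_{ij}(x_0) = -h'(0)\,\delta_{ij}, \qquad i,j<n.$$

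Second, since $g_{TM}^{kl}(x_0)=\delta^{kl}$ by (\ref{b10}), only the term $l=k$ contributes to the Koszul sum at $x_0$, and the computation reduces to a short case analysis on the locations of $s,t,k$. For $s=t=i<n$ and $k=n$ the first two terms $\partial_{x_i} g^{TM}_{in}$ vanish identically, leaving $\Gamma^n_{ii}(x_0) = -\frac{1}{2}\partial_{x_n} g^{TM}_{ii}(x_0) = \frac{1}{2} h'(0)$. For $s=n$, $t=k=i<n$ the last two terms cancel and only $\frac{1}{2}\partial_{x_n} g^{TM}_{ii}(x_0) = -\frac{1}{2} h'(0)$ remains, giving $\Gamma^i_{ni}(x_0)$; the symmetric case $s=i$, $t=n$, $k=i$ is identical. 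Any other index combination involves only derivatives that are zero at $x_0$, so the corresponding Christoffel symbol vanishes.

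There is no substantial obstacle — the computation is a direct specialization of the Koszul formula to a boundary-adapted metric. The only point requiring care is to exploit both pieces of (\ref{b10}) simultaneously: the choice of boundary normal coordinates kills all tangential derivatives of $g^{\partial M}_{ij}$ at $x_0$, and together with the warping factor $1/h(x_n)$ this isolates $\partial_{x_n}$ as the sole source of non-trivial Christoffel symbols, which in turn produces the factor $h'(0)$ in each formula.
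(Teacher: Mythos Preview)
Your computation is correct: the Koszul formula together with the block form (\ref{b9}) and the normal-coordinate conditions (\ref{b10}) immediately isolate $\partial_{x_n} g^{TM}_{ii}(x_0)=-h'(0)\delta_{ii}$ as the sole non-zero metric derivative, and the three displayed Christoffel symbols follow. The paper itself supplies no proof of this lemma but simply cites it from \cite{Wa3}; your direct verification is exactly the standard argument one would expect behind that citation.
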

\indent By (\ref{b7}) and (\ref{b8}), we firstly compute
\begin{equation}
\label{b14}
\widetilde{{\rm Wres}}[\pi^+{D_t}^{-1}\circ\pi^+({D_t}^*)^{-1}]=\int_M\int_{|\xi'|=1}{\rm
trace}_{\wedge^*T^*M\bigotimes\mathbb{C}}[\sigma_{-4}(({D_t}^*{D_t})^{-1})]\sigma(\xi)dx+\int_{\partial M}\Phi,
\end{equation}
where
\begin{align}
\label{b15}
\Phi &=\int_{|\xi'|=1}\int^{+\infty}_{-\infty}\sum^{\infty}_{j, k=0}\sum\frac{(-i)^{|\alpha|+j+k+1}}{\alpha!(j+k+1)!}
\times {\rm trace}_{\wedge^*T^*M\bigotimes\mathbb{C}}[\partial^j_{x_n}\partial^\alpha_{\xi'}\partial^k_{\xi_n}\sigma^+_{r}({D_t}^{-1})(x',0,\xi',\xi_n)
\nonumber\\
&\times\partial^\alpha_{x'}\partial^{j+1}_{\xi_n}\partial^k_{x_n}\sigma_{l}(({D_t}^*)^{-1})(x',0,\xi',\xi_n)]d\xi_n\sigma(\xi')dx',
\end{align}
and the sum is taken over $r+l-k-j-|\alpha|=-3,~~r\leq -1,l\leq-1$.\\

\indent By Theorem \ref{thm2}, we can compute the interior of $\widetilde{{\rm Wres}}[\pi^+{D_t}^{-1}\circ\pi^+({D_t}^*)^{-1}]$, by
\begin{align}
\label{b16}
&\int_M\int_{|\xi'|=1}{\rm
trace}_{\wedge^*T^*M}[\sigma_{-4}(({D_t}^*D_t)^{-1})]\sigma(\xi)dx'=32\pi^2\int_{M}\bigg(-\frac{4}{3}K-\frac{(\overline{t}-t)^2}{2}\sum_{i=1}^{4}\sum_{\alpha=1}^{k}|S(e_i)f_\alpha|^2\bigg)d{\rm Vol_{M}}.
\end{align}
\indent We directly compute
$$\int_M\int_{|\xi'|=1}{\rm
trace}_{\wedge^*T^*M}[\sigma_{-4}(\overline{\Delta}^{-1})]\sigma(\xi)dx',
$$
where $\overline{\Delta}={D_t}^*D_t$.\\
\indent We follow the method in \cite{Ka} and \cite{KW}, and we use the general coodinate system as in \cite{Ka}. We note that the proof of Kastler and Kalau-Walze does not use the no boundary condition of manifolds. So by the same computations of Kastler and Kalau-Walze, we can get the interior term is the same as the right side of (\ref{a28}) for $n=4$.\\
\indent Now we  need to compute $\int_{\partial M} \Phi$. Since (\ref{a8}) and (\ref{a9}), some operators have the following symbols.
\begin{lem}\label{lem2} The following identities hold:
\begin{align}
\label{b17}
\sigma_1({D_t})&=\sigma_1({D_t}^*)=ic(\xi); \nonumber\\ \sigma_0({D_t})&=\frac{1}{4}\sum_{i,s,t}\omega_{s,t}(e_i)c(e_i)\widehat{c}(e_s)\widehat{c}(e_t)
-\frac{1}{4}\sum_{i,s,t}\omega_{s,t}(e_i)c(e_i)c(e_s)c(e_t)+tA; \nonumber\\
\sigma_0({D_t}^*)&=\frac{1}{4}\sum_{i,s,t}\omega_{s,t}(e_i)c(e_i)\widehat{c}(e_s)\widehat{c}(e_t)
-\frac{1}{4}\sum_{i,s,t}\omega_{s,t}(e_i)c(e_i)c(e_s)c(e_t)+\overline{t}A.\nonumber\\
\end{align}
\end{lem}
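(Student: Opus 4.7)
The plan is to read off the total symbol directly from the explicit formulas (\ref{a8}) and (\ref{a9}) for $D_t$ and $D_t^{*}$, since both are first-order differential operators acting on sections of $\bigwedge^{*}T^{*}M\otimes\mathbb{C}$, so their symbols consist only of an order-$1$ part and an order-$0$ part.

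First I would isolate the differentiation in (\ref{a8}). Inside the bracket the only term that actually differentiates is $e_i$; every other term ($\omega_{s,t}(e_i)\,\widehat{c}(e_s)\widehat{c}(e_t)$, $\omega_{s,t}(e_i)\,c(e_s)c(e_t)$, and the operator $A$) is pointwise multiplication by an endomorphism of $\bigwedge^{*}T^{*}M\otimes\mathbb{C}$. Using the standard convention $\sigma(e_i)(x,\xi)=i\,\xi(e_i)$, this gives $\sigma_1(D_t)=\sum_i c(e_i)\,i\,\xi(e_i)=ic(\xi)$. Since $D_t^{*}$ differs from $D_t$ only in the zeroth-order term $\overline{t}A$ versus $tA$, the same calculation yields $\sigma_1(D_t^{*})=ic(\xi)$.

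Next I would collect the remaining (undifferentiated) Clifford-algebra pieces as $\sigma_0$. From (\ref{a8}) these are exactly
\[
\tfrac{1}{4}\sum_{i,s,t}\omega_{s,t}(e_i)\,c(e_i)\widehat{c}(e_s)\widehat{c}(e_t)
-\tfrac{1}{4}\sum_{i,s,t}\omega_{s,t}(e_i)\,c(e_i)c(e_s)c(e_t)+tA,
\]
and from (\ref{a9}) the same expression with $tA$ replaced by $\overline{t}A$, giving the two stated formulas.

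There is no real obstacle here, only a bookkeeping point to watch: one must confirm that no further order-$0$ contribution arises from commuting $c(e_i)$ past the differentiation $e_i$, which is immediate because $c(e_i)$ is multiplication by a fixed endomorphism in the chosen orthonormal frame and $e_i$ acts as a derivation; thus the separation into orders $1$ and $0$ is clean and the lemma follows.
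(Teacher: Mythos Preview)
Your proposal is correct and matches the paper's own treatment: the paper offers no proof at all beyond the remark ``Since (\ref{a8}) and (\ref{a9}), some operators have the following symbols,'' i.e., it regards the lemma as an immediate reading of the definitions, exactly as you do. Your added observation that no spurious order-$0$ term arises from $c(e_i)\circ e_i$ is a sound sanity check and, if anything, makes the argument more complete than the paper's.
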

\indent Write
 \begin{eqnarray}
 \label{b18}
D_x^{\alpha}&=(-i)^{|\alpha|}\partial_x^{\alpha};
~\sigma(D_t)=p_1+p_0;
~(\sigma(D_t)^{-1})=\sum^{\infty}_{j=1}q_{-j}.
\end{eqnarray}
\indent By the composition formula of pseudodifferential operators, we have
\begin{align}
\label{b19}
1=\sigma(D_t\circ {D_t}^{-1})&=\sum_{\alpha}\frac{1}{\alpha!}\partial^{\alpha}_{\xi}[\sigma({D_t})]
{D_t}_x^{\alpha}[\sigma({D_t}^{-1})]\nonumber\\
&=(p_1+p_0)(q_{-1}+q_{-2}+q_{-3}+\cdots)\nonumber\\
&~~~+\sum_j(\partial_{\xi_j}p_1+\partial_{\xi_j}p_0)(
D_{x_j}q_{-1}+D_{x_j}q_{-2}+D_{x_j}q_{-3}+\cdots)\nonumber\\
&=p_1q_{-1}+(p_1q_{-2}+p_0q_{-1}+\sum_j\partial_{\xi_j}p_1D_{x_j}q_{-1})+\cdots,
\end{align}
so
\begin{equation}
\label{b20}
q_{-1}=p_1^{-1};~q_{-2}=-p_1^{-1}[p_0p_1^{-1}+\sum_j\partial_{\xi_j}p_1D_{x_j}(p_1^{-1})].
\end{equation}
\label{b21}
\begin{lem}\label{lem3} The following identities hold:
\begin{align}
\label{b22}
\sigma_{-1}({D_t}^{-1})&=\sigma_{-1}(({D_t}^*)^{-1})=\frac{ic(\xi)}{|\xi|^2};\nonumber\\
\sigma_{-2}({D_t}^{-1})&=\frac{c(\xi)\sigma_{0}(D_t)c(\xi)}{|\xi|^4}+\frac{c(\xi)}{|\xi|^6}\sum_jc(dx_j)
\Big[\partial_{x_j}(c(\xi))|\xi|^2-c(\xi)\partial_{x_j}(|\xi|^2)\Big] ;\nonumber\\
\sigma_{-2}(({D_t}^*)^{-1})&=\frac{c(\xi)\sigma_{0}({D_t}^*)c(\xi)}{|\xi|^4}+\frac{c(\xi)}{|\xi|^6}\sum_jc(dx_j)
\Big[\partial_{x_j}(c(\xi))|\xi|^2-c(\xi)\partial_{x_j}(|\xi|^2)\Big].
\end{align}
\end{lem}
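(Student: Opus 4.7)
The plan is to apply the standard symbol-inversion formulas (\ref{b20}) that the paper has already set up, feeding in the principal and subprincipal symbols of $D_t$ and $D_t^*$ recorded in Lemma \ref{lem2}. The key algebraic input is the Clifford identity $c(\xi)^2 = -|\xi|^2$, which follows from the second line of (\ref{a4}) applied to $\xi = \sum \xi_j dx_j$. Observe first that since $\sigma_1(D_t) = \sigma_1(D_t^*) = ic(\xi)$, the inversion will be identical at the leading order for both operators; the two asymmetries only show up in the subleading term through $\sigma_0$.

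For the first identity, I take $p_1 = ic(\xi)$ and compute $p_1^{-1}$: using $c(\xi)^2 = -|\xi|^2$ we have $(ic(\xi))(ic(\xi)/|\xi|^2) = -c(\xi)^2/|\xi|^2 = 1$, so $q_{-1} = p_1^{-1} = ic(\xi)/|\xi|^2$. Replacing $D_t$ by $D_t^*$ changes nothing at this order, giving both formulas in the first line of (\ref{b22}).

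For the second identity, I apply the formula $q_{-2} = -p_1^{-1}\bigl[p_0 p_1^{-1} + \sum_j \partial_{\xi_j}p_1 \cdot D_{x_j}(p_1^{-1})\bigr]$ with $p_0 = \sigma_0(D_t)$. The first summand yields
\[
-\frac{ic(\xi)}{|\xi|^2}\cdot \sigma_0(D_t)\cdot \frac{ic(\xi)}{|\xi|^2} = \frac{c(\xi)\sigma_0(D_t)c(\xi)}{|\xi|^4},
\]
which is the first term of the claimed expression. For the second summand, I use $\partial_{\xi_j}(ic(\xi)) = ic(dx_j)$ and
\[
D_{x_j}\!\left(\tfrac{ic(\xi)}{|\xi|^2}\right) = -i\,\partial_{x_j}\!\left(\tfrac{ic(\xi)}{|\xi|^2}\right) = \frac{\partial_{x_j}(c(\xi))|\xi|^2 - c(\xi)\partial_{x_j}(|\xi|^2)}{|\xi|^4};
\]
multiplying by $-p_1^{-1} = -ic(\xi)/|\xi|^2$ and simplifying the two factors of $i$ produces exactly the second term of the stated formula. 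The computation for $(D_t^*)^{-1}$ is line-for-line identical, with $\sigma_0(D_t)$ replaced by $\sigma_0(D_t^*)$ in the first term; the second term is unchanged because it only depends on $p_1$ and the metric.

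There is no genuine obstacle here since this is a direct symbol calculation: the only care needed is keeping track of signs and of the Clifford identity $c(\xi)^2 = -|\xi|^2$ when inverting $ic(\xi)$, and noticing that the second term does not depend on the choice between $D_t$ and $D_t^*$. The lemma is thus essentially a bookkeeping exercise on top of the general inversion formula (\ref{b20}), and the proof will fit in a few lines.
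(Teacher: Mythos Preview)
Your proposal is correct and follows exactly the approach the paper intends: the lemma is stated immediately after the inversion formulas (\ref{b20}) and is meant to be a direct application of them together with the Clifford identity $c(\xi)^2=-|\xi|^2$, precisely as you have carried out. The paper gives no separate proof beyond this setup, so your computation is the expected one.
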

\begin{thm}\label{thmb1}
Let $M$ be a $4$-dimensional oriented
compact manifold with boundary $\partial M$ and the metric
$g^{TM}$ be defined as (\ref{b1}), ${D_t}$ and ${D_t}^*$ be sub-signature operators on $\widetilde{M}$ ($\widetilde{M}$ is a collar neighborhood of $M$) as in (\ref{a8}), (\ref{a9}), then
\begin{align}
\label{b23}
\widetilde{{\rm Wres}}[\pi^+{D_t}^{-1}\circ\pi^+({D_t}^*)^{-1}]&=32\pi^2\int_{M}\bigg(-\frac{4}{3}K-\frac{(\overline{t}-t)^2}{2}\sum_{i=1}^{4}\sum_{\alpha=1}^{k}|S(e_i)f_\alpha|^2\bigg)d{\rm Vol_{M}},
\end{align}
where $K$ is the scalar curvature. In particular, the boundary term vanishes.\\
\end{thm}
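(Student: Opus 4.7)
The plan is to split $\widetilde{{\rm Wres}}[\pi^+{D_t}^{-1}\circ\pi^+({D_t}^*)^{-1}]$ into an interior volume integral and a boundary integral via (\ref{b14})--(\ref{b15}), and to handle the two contributions separately. Since ${D_t}^{-1}({D_t}^*)^{-1}=({D_t}^*{D_t})^{-1}$, the interior integrand is governed by the symbol of the inverse of the Laplace-type operator studied in Theorem \ref{thm2}, whereas the boundary integrand is assembled out of $\sigma_{-1}$ and $\sigma_{-2}$ of the two factors via the leftover formula (\ref{b15}).

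For the interior contribution I would specialize Theorem \ref{thm2} at $n=4$. Using ${\rm tr}[{\rm \texttt{id}}]=2^{4}$, the prefactor $(n-2)(4\pi)^{n/2}/((n/2-1)!)=32\pi^{2}$ at $n=4$, and the algebraic simplification
\begin{equation*}
(t^{2}+\bar t^{2})(n-2)-2n\bar t t+4t\bar t\,\Big|_{n=4}=2(t-\bar t)^{2},
\end{equation*}
equation (\ref{a28}) reduces to (\ref{b16}) directly. By the argument of Kastler \cite{Ka} and Kalau-Walze \cite{KW}, this interior density is local and coincides with its closed-manifold analogue, so no extra boundary-dependent piece arises from the interior term.

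For the boundary integral $\int_{\partial M}\Phi$ I would enumerate the admissible index tuples $(r,l,k,j,|\alpha|)$ in (\ref{b15}) subject to $r+l-k-j-|\alpha|=-3$ with $r,l\le -1$: three sub-cases with $(r,l)=(-1,-1)$ and $(k,j,|\alpha|)\in\{(1,0,0),(0,1,0),(0,0,1)\}$, together with the mixed pair $(r,l)\in\{(-1,-2),(-2,-1)\}$ and $k=j=|\alpha|=0$. For each, I would read off the symbols from Lemma \ref{lem3}, evaluate the geometric data at $x_{0}\in\partial M$ via Lemmas \ref{le:32} and \ref{lem1}, compute $\pi^{+}$ on the first factor via the residue formula (\ref{b3}), and then integrate in $\xi_{n}$ by contour integration followed by integration over $|\xi'|=1$.

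The main obstacle is the presence of the new $tA$ (resp.\ $\bar t A$) terms in $\sigma_{0}(D_{t})$ and $\sigma_{0}(D_{t}^{*})$, which perturb both $\sigma_{-2}(D_{t}^{-1})$ and $\sigma_{-2}(({D_{t}^{*}})^{-1})$ away from the Hodge-Dirac case treated in \cite{Wa3}. My resolution is structural: the Clifford actions $c$ and $\widehat{c}$ commute on $\wedge^{*}T^{*}M\otimes\mathbb{C}$, so the trace of any product involving exactly one factor of $A$ separates into a pure-$c$ trace times the $\widehat{c}$-trace ${\rm tr}[\widehat{c}(S(e_{i})f_{\alpha})\widehat{c}(f_{\alpha})]$, which equals $g^{TM}(S(e_{i})f_{\alpha},f_{\alpha})\,{\rm tr}[{\rm \texttt{id}}]=0$ because $S(e_{i})f_{\alpha}\in F$ and $f_{\alpha}\in F^{\bot}$ (as already used in deriving (\ref{d1}) and (\ref{d16})). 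In all five cases above the $A$-contribution is \emph{linear} in $A$: a quadratic $A\cdot A$ contribution would force $r=l=-2$ and $k+j+|\alpha|=-1$, which is impossible. Hence every $A$-dependent boundary term vanishes pointwise on $\partial M$, and the surviving boundary integrand coincides with that of $\widetilde{{\rm Wres}}[\pi^{+}(d+\delta)^{-1}\circ\pi^{+}(d+\delta)^{-1}]$, already known to vanish by \cite{Wa3}. Combining with the interior then yields (\ref{b23}).
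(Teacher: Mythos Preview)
Your overall strategy is correct and is a genuine shortcut relative to the paper. The paper carries out the five boundary cases $\Phi_{1},\dots,\Phi_{5}$ in full: it evaluates each contour integral explicitly, obtaining $\Phi_{1}=0$, $\Phi_{2}=-\tfrac{3}{2}\pi h'(0)\Omega_{3}\,dx'$, $\Phi_{3}=\tfrac{3}{2}\pi h'(0)\Omega_{3}\,dx'$, $\Phi_{4}=\tfrac{9}{2}\pi h'(0)\Omega_{3}\,dx'$, $\Phi_{5}=-\tfrac{9}{2}\pi h'(0)\Omega_{3}\,dx'$, and only then observes the cancellation. In particular the paper re-derives the $Q_{0}^{1},Q_{0}^{2}$ pieces (the $d+\delta$ part) from scratch rather than quoting \cite{Wa3}. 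Your observation that the $A$-perturbation can only enter the boundary integrand \emph{linearly} (a quadratic occurrence would require $r=l=-2$, forcing $k+j+|\alpha|=-1$) is a clean structural point that the paper does not make explicit; combined with the trace identity ${\rm tr}[c(dx_{n})A]=0={\rm tr}[c(\xi')A]$ (the paper records this in (\ref{49})), it lets you reduce immediately to the signature-operator result of \cite{Wa3} without repeating any residue calculus. What the paper's explicit route buys is that the individual $\Phi_{i}$ values are visible, which is useful later as a template for the $6$-dimensional case in Section~\ref{section:4}.

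One correction: you write that $c$ and $\widehat{c}$ \emph{commute}, but by (\ref{a4}) they \emph{anticommute}. Your trace-factorisation heuristic survives this (in even dimension $\wedge^{*}T^{*}M\otimes\mathbb{C}\cong S\otimes S^{*}$, with $c$ and $\widehat{c}$ acting on separate tensor factors up to a chirality twist), but the cleaner justification is the direct one the paper uses: for $i\neq n$, cyclicity and $c(e_{i})c(e_{n})=-c(e_{n})c(e_{i})$ together with $c(e_{n})\widehat{c}(X)\widehat{c}(Y)=\widehat{c}(X)\widehat{c}(Y)c(e_{n})$ force ${\rm tr}[c(e_{i})\widehat{c}(S(e_{i})f_{\alpha})\widehat{c}(f_{\alpha})c(e_{n})]=0$, while for $i=n$ one is left with $-{\rm tr}[\widehat{c}(S(e_{n})f_{\alpha})\widehat{c}(f_{\alpha})]=-g^{TM}(S(e_{n})f_{\alpha},f_{\alpha})\,{\rm tr}[{\rm id}]=0$ since $S(e_{n})f_{\alpha}\in F\perp f_{\alpha}$. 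With that fix your argument is complete.
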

\begin{proof}
\indent When $n=4$, then ${\rm tr}_{\wedge^*T^*M}[{\rm \texttt{id}}]={\rm dim}(\wedge^*(\mathbb{R}^4))=16$, the sum is taken over $
r+l-k-j-|\alpha|=-3,~~r\leq -1,l\leq-1,$ then we have the following five cases:
~\\
\noindent  {\bf case a)~I)}~$r=-1,~l=-1,~k=j=0,~|\alpha|=1$.\\
\noindent By (\ref{b15}), we get
\begin{equation}
\label{b24}
\Phi_1=-\int_{|\xi'|=1}\int^{+\infty}_{-\infty}\sum_{|\alpha|=1}
 {\rm tr}[\partial^\alpha_{\xi'}\pi^+_{\xi_n}\sigma_{-1}({D_t}^{-1})\times
 \partial^\alpha_{x'}\partial_{\xi_n}\sigma_{-1}(({D_t}^*)^{-1})](x_0)d\xi_n\sigma(\xi')dx'.
\end{equation}
By Lemma \ref{le:32}, for $i<n$, then
\begin{equation}
\label{b25}\partial_{x_i}\left(\frac{ic(\xi)}{|\xi|^2}\right)(x_0)=
\frac{i\partial_{x_i}[c(\xi)](x_0)}{|\xi|^2}
-\frac{ic(\xi)\partial_{x_i}(|\xi|^2)(x_0)}{|\xi|^4}=0,
\end{equation}
\noindent so $\Phi_1=0$.\\
 \noindent  {\bf case a)~II)}~$r=-1,~l=-1,~k=|\alpha|=0,~j=1$.\\
\noindent By (\ref{b15}), we get
\begin{equation}
\label{b26}
\Phi_2=-\frac{1}{2}\int_{|\xi'|=1}\int^{+\infty}_{-\infty} {\rm
trace} [\partial_{x_n}\pi^+_{\xi_n}\sigma_{-1}({D_t}^{-1})\times
\partial_{\xi_n}^2\sigma_{-1}(({D_t}^*)^{-1})](x_0)d\xi_n\sigma(\xi')dx'.
\end{equation}
\noindent By Lemma \ref{lem3}, we have\\
\begin{eqnarray}\label{b27}\partial^2_{\xi_n}\sigma_{-1}(({D_t}^*)^{-1})(x_0)=i\left(-\frac{6\xi_nc(dx_n)+2c(\xi')}
{|\xi|^4}+\frac{8\xi_n^2c(\xi)}{|\xi|^6}\right);
\end{eqnarray}
\begin{eqnarray}\label{b28}
\partial_{x_n}\sigma_{-1}({D_t}^{-1})(x_0)=\frac{i\partial_{x_n}c(\xi')(x_0)}{|\xi|^2}-\frac{ic(\xi)|\xi'|^2h'(0)}{|\xi|^4}.
\end{eqnarray}
By (\ref{b3}), (\ref{b4}) and the Cauchy integral formula we have
\begin{align}\label{b29}
\pi^+_{\xi_n}\left[\frac{c(\xi)}{|\xi|^4}\right](x_0)|_{|\xi'|=1}&=\pi^+_{\xi_n}\left[\frac{c(\xi')+\xi_nc(dx_n)}{(1+\xi_n^2)^2}\right]\nonumber\\
&=\frac{1}{2\pi i}{\rm lim}_{u\rightarrow
0^-}\int_{\Gamma^+}\frac{\frac{c(\xi')+\eta_nc(dx_n)}{(\eta_n+i)^2(\xi_n+iu-\eta_n)}}
{(\eta_n-i)^2}d\eta_n\nonumber\\
&=-\frac{(i\xi_n+2)c(\xi')+ic(dx_n)}{4(\xi_n-i)^2}.
\end{align}
Similarly, we have,
\begin{eqnarray}\label{30}
\pi^+_{\xi_n}\left[\frac{i\partial_{x_n}c(\xi')}{|\xi|^2}\right](x_0)|_{|\xi'|=1}=\frac{\partial_{x_n}[c(\xi')](x_0)}{2(\xi_n-i)}.
\end{eqnarray}
By (\ref{b28}), then\\
\begin{align}\label{31}\pi^+_{\xi_n}\partial_{x_n}\sigma_{-1}({D_t}^{-1})|_{|\xi'|=1}
=\frac{\partial_{x_n}[c(\xi')](x_0)}{2(\xi_n-i)}+ih'(0)
\left[\frac{(i\xi_n+2)c(\xi')+ic(dx_n)}{4(\xi_n-i)^2}\right].
\end{align}
\noindent By the relation of the Clifford action and ${\rm tr}{ab}={\rm tr }{ba}$, we have the equalities:\\
\begin{align}\label{32}
&{\rm tr}[c(\xi')c(dx_n)]=0;~~{\rm tr}[c(dx_n)^2]=-16;~~{\rm tr}[c(\xi')^2](x_0)|_{|\xi'|=1}=-16;\nonumber\\
&{\rm tr}[\partial_{x_n}c(\xi')c(dx_n)]=0;~~{\rm tr}[\partial_{x_n}c(\xi')c(\xi')](x_0)|_{|\xi'|=1}=-8h'(0);\nonumber\\
&{\rm tr}[\widehat{c}(e_i)\widehat{c}(e_j)c(e_k)c(e_l)]=0(i\neq j).
\end{align}
By (\ref{b29}), we have
\begin{eqnarray}\label{33}
&h'(0){\rm tr}\bigg[\frac{(i\xi_n+2)c(\xi')+ic(dx_n)}{4(\xi_n-i)^2}\times
\bigg(\frac{6\xi_nc(dx_n)+2c(\xi')}{(1+\xi_n^2)^2}
-\frac{8\xi_n^2[c(\xi')+\xi_nc(dx_n)]}{(1+\xi_n^2)^3}\bigg)
\bigg](x_0)|_{|\xi'|=1}\nonumber\\
&=-16h'(0)\frac{-2i\xi_n^2-\xi_n+i}{(\xi_n-i)^4(\xi_n+i)^3}.
\end{eqnarray}
Similarly, we have
\begin{align}\label{34}
&-i{\rm
tr}\bigg[\bigg(\frac{\partial_{x_n}[c(\xi')](x_0)}{2(\xi_n-i)}\bigg)
\times\bigg(\frac{6\xi_nc(dx_n)+2c(\xi')}{(1+\xi_n^2)^2}-\frac{8\xi_n^2[c(\xi')+\xi_nc(dx_n)]}
{(1+\xi_n^2)^3}\bigg)\bigg](x_0)|_{|\xi'|=1}\nonumber\\
&=-8ih'(0)\frac{3\xi_n^2-1}{(\xi_n-i)^4(\xi_n+i)^3}.
\end{align}
Then\\
\begin{align}\label{35}
\Phi_2&=-\int_{|\xi'|=1}\int^{+\infty}_{-\infty}\frac{4ih'(0)(\xi_n-i)^2}
{(\xi_n-i)^4(\xi_n+i)^3}d\xi_n\sigma(\xi')dx'\nonumber\\
&=-4ih'(0)\Omega_3\int_{\Gamma^+}\frac{1}{(\xi_n-i)^2(\xi_n+i)^3}d\xi_ndx'\nonumber\\
&=-4ih'(0)\Omega_32\pi i[\frac{1}{(\xi_n+i)^3}]^{(1)}|_{\xi_n=i}dx'\nonumber\\
&=-\frac{3}{2}\pi h'(0)\Omega_3dx',\nonumber\\
\end{align}
where ${\rm \Omega_{3}}$ is the canonical volume of $S^{3}.$\\
\noindent  {\bf case a)~III)}~$r=-1,~l=-1,~j=|\alpha|=0,~k=1$.\\
\noindent By (\ref{b15}), we get
\begin{equation}\label{36}
\Phi_3=-\frac{1}{2}\int_{|\xi'|=1}\int^{+\infty}_{-\infty}
{\rm trace} [\partial_{\xi_n}\pi^+_{\xi_n}\sigma_{-1}({D_t}^{-1})\times
\partial_{\xi_n}\partial_{x_n}\sigma_{-1}(({D_t}^*)^{-1})](x_0)d\xi_n\sigma(\xi')dx'.
\end{equation}
\noindent By Lemma \ref{lem3}, we have\\
\begin{eqnarray}\label{37}
\partial_{\xi_n}\partial_{x_n}\sigma_{-1}(({D_t}^*)^{-1})(x_0)|_{|\xi'|=1}
=-ih'(0)\left[\frac{c(dx_n)}{|\xi|^4}-4\xi_n\frac{c(\xi')
+\xi_nc(dx_n)}{|\xi|^6}\right]-\frac{2\xi_ni\partial_{x_n}c(\xi')(x_0)}{|\xi|^4};
\end{eqnarray}
\begin{eqnarray}\label{38}
\partial_{\xi_n}\pi^+_{\xi_n}\sigma_{-1}({D_t}^{-1})(x_0)|_{|\xi'|=1}=-\frac{c(\xi')+ic(dx_n)}{2(\xi_n-i)^2}.
\end{eqnarray}
Similar to {\rm case~a)~II)}, we have\\
\begin{eqnarray}\label{39}
&&{\rm tr}\left\{\frac{c(\xi')+ic(dx_n)}{2(\xi_n-i)^2}\times
ih'(0)\left[\frac{c(dx_n)}{|\xi|^4}-4\xi_n\frac{c(\xi')+\xi_nc(dx_n)}{|\xi|^6}\right]\right\}
=8h'(0)\frac{i-3\xi_n}{(\xi_n-i)^4(\xi_n+i)^3}
\end{eqnarray}
and
\begin{eqnarray}\label{40}
{\rm tr}\left[\frac{c(\xi')+ic(dx_n)}{2(\xi_n-i)^2}\times
\frac{2\xi_ni\partial_{x_n}c(\xi')(x_0)}{|\xi|^4}\right]
=\frac{-8ih'(0)\xi_n}{(\xi_n-i)^4(\xi_n+i)^2}.
\end{eqnarray}
So we have
\begin{align}\label{41}
\Phi_3&=-\int_{|\xi'|=1}\int^{+\infty}_{-\infty}\frac{h'(0)4(i-3\xi_n)}
{(\xi_n-i)^4(\xi_n+i)^3}d\xi_n\sigma(\xi')dx'
-\int_{|\xi'|=1}\int^{+\infty}_{-\infty}\frac{h'(0)4i\xi_n}
{(\xi_n-i)^4(\xi_n+i)^2}d\xi_n\sigma(\xi')dx'\nonumber\\
&=-h'(0)\Omega_3\frac{2\pi i}{3!}[\frac{4(i-3\xi_n)}{(\xi_n+i)^3}]^{(3)}|_{\xi_n=i}dx'+h'(0)\Omega_3\frac{2\pi i}{3!}[\frac{4i\xi_n}{(\xi_n+i)^2}]^{(3)}|_{\xi_n=i}dx'\nonumber\\
&=\frac{3}{2}\pi h'(0)\Omega_3dx'.
\end{align}
\noindent  {\bf case b)}~$r=-2,~l=-1,~k=j=|\alpha|=0$.\\
\noindent By (\ref{b15}), we get
\begin{align}\label{42}
\Phi_4&=-i\int_{|\xi'|=1}\int^{+\infty}_{-\infty}{\rm trace} [\pi^+_{\xi_n}\sigma_{-2}({D_t}^{-1})\times
\partial_{\xi_n}\sigma_{-1}(({D_t}^*)^{-1})](x_0)d\xi_n\sigma(\xi')dx'.
\end{align}
 By Lemma \ref{lem3}, we have\\
\begin{align}\label{43}
\sigma_{-2}({D_t}^{-1})(x_0)=\frac{c(\xi)\sigma_{0}({D_t})(x_0)c(\xi)}{|\xi|^4}+\frac{c(\xi)}{|\xi|^6}c(dx_n)
[\partial_{x_n}[c(\xi')](x_0)|\xi|^2-c(\xi)h'(0)|\xi|^2_{\partial
M}],
\end{align}
where
\begin{align}\label{44}
\sigma_{0}({D_t})(x_0)&=\frac{1}{4}\sum_{s,t,i}\omega_{s,t}(e_i)
(x_{0})c(e_i)\widehat{c}(e_s)\widehat{c}(e_t)
-\frac{1}{4}\sum_{s,t,i}\omega_{s,t}(e_i)
(x_{0})c(e_i)c(e_s)c(e_t)+tA.\nonumber\\
\end{align}
We denote
\begin{align}\label{45}
Q_0^{1}(x_0)&=\frac{1}{4}\sum_{s,t,i}\omega_{s,t}(e_i)
(x_{0})c(e_i)\widehat{c}(e_s)\widehat{c}(e_t);\nonumber\\
Q_0^{2}(x_0)&=-\frac{1}{4}\sum_{s,t,i}\omega_{s,t}(e_i)
(x_{0})c(e_i)c(e_s)c(e_t).
\end{align}
Then
\begin{align}\label{46}
\pi^+_{\xi_n}\sigma_{-2}({D_t}^{-1}(x_0))|_{|\xi'|=1}&=\pi^+_{\xi_n}\Big[\frac{c(\xi)Q_0^{1}(x_0)c(\xi)}{(1+\xi_n^2)^2}\Big]+\pi^+_{\xi_n}
\Big[\frac{c(\xi)tA(x_0)c(\xi)}{(1+\xi_n^2)^2}\Big]
\nonumber\\
&+\pi^+_{\xi_n}\Big[\frac{c(\xi)Q_0^{2}(x_0)c(\xi)+c(\xi)c(dx_n)\partial_{x_n}[c(\xi')](x_0)}{(1+\xi_n^2)^2}-h'(0)\frac{c(\xi)c(dx_n)c(\xi)}{(1+\xi_n^{2})^3}\Big].
\end{align}
By computations, we have
\begin{align}\label{47}
\pi^+_{\xi_n}\Big[\frac{c(\xi)Q_0^{1}(x_0)c(\xi)}{(1+\xi_n^2)^2}\Big]&=\pi^+_{\xi_n}\Big[\frac{c(\xi')Q_0^{1}(x_0)c(\xi')}{(1+\xi_n^2)^2}\Big]
+\pi^+_{\xi_n}\Big[ \frac{\xi_nc(\xi')Q_0^{1}(x_0)c(dx_{n})}{(1+\xi_n^2)^2}\Big]\nonumber\\
&+\pi^+_{\xi_n}\Big[\frac{\xi_nc(dx_{n})Q_0^{1}(x_0)c(\xi')}{(1+\xi_n^2)^2}\Big]
+\pi^+_{\xi_n}\Big[\frac{\xi_n^{2}c(dx_{n})Q_0^{1}(x_0)c(dx_{n})}{(1+\xi_n^2)^2}\Big]\nonumber\\
&=-\frac{c(\xi')Q_0^{1}(x_0)c(\xi')(2+i\xi_{n})}{4(\xi_{n}-i)^{2}}
+\frac{ic(\xi')Q_0^{1}(x_0)c(dx_{n})}{4(\xi_{n}-i)^{2}}\nonumber\\
&+\frac{ic(dx_{n})Q_0^{1}(x_0)c(\xi')}{4(\xi_{n}-i)^{2}}
+\frac{-i\xi_{n}c(dx_{n})Q_0^{1}(x_0)c(dx_{n})}{4(\xi_{n}-i)^{2}}.
\end{align}
Since
\begin{align}\label{48}
c(dx_n)Q_0^{1}(x_0)
&=-\frac{1}{4}h'(0)\sum^{n-1}_{i=1}c(e_i)
\widehat{c}(e_i)c(e_n)\widehat{c}(e_n),
\end{align}
then by the relation of the Clifford action and ${\rm tr}{ab}={\rm tr }{ba}$,  we have the equalities:\\
\begin{align}\label{49}
&{\rm tr}[c(e_i)
\widehat{c}(e_i)c(e_n)
\widehat{c}(e_n)]=0~~(i<n);~~
{\rm tr}[Q_0^{1}c(dx_n)]=0;\nonumber\\
&{\rm tr }[t\sum_{i=1}^{n}\sum_{\alpha=1}^{k}c(e_i)\widehat{c}(s(e_i)f_\alpha)\widehat{c}(f_\alpha)c(dx_{n})]=0;
~~{\rm tr}[\widehat{c}(\xi')\widehat{c}(dx_n)]=0.
\end{align}
Since
\begin{align}\label{50}
\partial_{\xi_n}\sigma_{-1}(({D_t}^*)^{-1})=\partial_{\xi_n}q_{-1}(x_0)|_{|\xi'|=1}=i\left[\frac{c(dx_n)}{1+\xi_n^2}-\frac{2\xi_nc(\xi')+2\xi_n^2c(dx_n)}{(1+\xi_n^2)^2}\right].
\end{align}
By (\ref{47}) and (\ref{50}), we have
\begin{align}\label{51}
&{\rm tr }[\pi^+_{\xi_n}\Big[\frac{c(\xi)Q_0^{1}(x_0)c(\xi)}{(1+\xi_n^2)^2}\Big]
\times\partial_{\xi_n}\sigma_{-1}(({D_t}^*)^{-1})(x_0)]|_{|\xi'|=1}\nonumber\\
&=\frac{1}{2(1+\xi_n^2)^2}{\rm tr }[c(\xi')Q_0^{1}(x_0)]
+\frac{i}{2(1+\xi_n^2)^2}{\rm tr }[c(dx_n)Q_0^{1}(x_0)]\nonumber\\
&=\frac{1}{2(1+\xi_n^2)^2}{\rm tr }[c(\xi')Q_0^{1}(x_0)].
\end{align}
We note that $i<n,~\int_{|\xi'|=1}\{\xi_{i_{1}}\xi_{i_{2}}\cdots\xi_{i_{2d+1}}\}\sigma(\xi')=0$,
so ${\rm tr }[c(\xi')Q_0^{1}(x_0)]$ has no contribution for computing {\rm case~b)}.

By computations, we have
\begin{eqnarray}\label{52}
\pi^+_{\xi_n}\Big[\frac{c(\xi)Q_0^{2}(x_0)c(\xi)+c(\xi)c(dx_n)\partial_{x_n}[c(\xi')](x_0)}{(1+\xi_n^2)^2}\Big]-h'(0)\pi^+_{\xi_n}\Big[\frac{c(\xi)c(dx_n)c(\xi)}{(1+\xi_n)^3}\Big]:= C_1-C_2,
\end{eqnarray}
where
\begin{align}\label{53}
C_1&=\frac{-1}{4(\xi_n-i)^2}[(2+i\xi_n)c(\xi')Q_0^{2}(x_0)c(\xi')+i\xi_nc(dx_n)Q_0^{2}(x_0)c(dx_n)\nonumber\\
&+(2+i\xi_n)c(\xi')c(dx_n)\partial_{x_n}c(\xi')+ic(dx_n)Q_0^{2}(x_0)c(\xi')
+ic(\xi')Q_0^{2}(x_0)c(dx_n)-i\partial_{x_n}c(\xi')]
\end{align}
and
\begin{align}\label{54}
C_2&=\frac{h'(0)}{2}\left[\frac{c(dx_n)}{4i(\xi_n-i)}+\frac{c(dx_n)-ic(\xi')}{8(\xi_n-i)^2}
+\frac{3\xi_n-7i}{8(\xi_n-i)^3}[ic(\xi')-c(dx_n)]\right].
\end{align}
By (\ref{50}) and (\ref{54}), we have\\
\begin{eqnarray}\label{55}{\rm tr }[C_2\times\partial_{\xi_n}\sigma_{-1}(({D_t}^*)^{-1})]|_{|\xi'|=1}
&=\frac{i}{2}h'(0)\frac{-i\xi_n^2-\xi_n+4i}{4(\xi_n-i)^3(\xi_n+i)^2}{\rm tr}[ \texttt{id}]\nonumber\\
&=8ih'(0)\frac{-i\xi_n^2-\xi_n+4i}{4(\xi_n-i)^3(\xi_n+i)^2}.
\end{eqnarray}
By (\ref{50}) and (\ref{53}), we have
\begin{eqnarray}\label{56}{\rm tr }[C_1\times\partial_{\xi_n}\sigma_{-1}(({D_t}^*)^{-1})]|_{|\xi'|=1}=
\frac{-8ic_0}{(1+\xi_n^2)^2}+2h'(0)\frac{\xi_n^2-i\xi_n-2}{(\xi_n-i)(1+\xi_n^2)^2},
\end{eqnarray}
where $Q_0^{2}=c_0c(dx_n)$ and $c_0=-\frac{3}{4}h'(0)$.\\
By (\ref{55}) and (\ref{56}), we have
\begin{align}\label{57}
&-i\int_{|\xi'|=1}\int^{+\infty}_{-\infty}{\rm trace} [(C_1-C_2)\times
\partial_{\xi_n}\sigma_{-1}(({D_t}^*)^{-1})](x_0)d\xi_n\sigma(\xi')dx'\nonumber\\
&=-\Omega_3\int_{\Gamma^+}\frac{8c_0(\xi_n-i)+ih'(0)}{(\xi_n-i)^3(\xi_n+i)^2}d\xi_ndx'\nonumber\\
&=\frac{9}{2}\pi h'(0)\Omega_3dx'.
\end{align}
Similar to (\ref{51}), we have
\begin{align}
\label{58}
&{\rm tr }[\pi^+_{\xi_n}\Big[\frac{c(\xi)tA(x_0)c(\xi)}{(1+\xi_n^2)^2}\Big]
\times\partial_{\xi_n}\sigma_{-1}(({D_t}^*)^{-1})(x_0)]|_{|\xi'|=1}=\frac{i}{2(1+\xi_n^2)^2}{\rm tr }[c(dx_n)tA(x_0)].\nonumber\\
\end{align}
By (\ref{58}), we have
\begin{align}\label{59}
&-i\int_{|\xi'|=1}\int^{+\infty}_{-\infty}{\rm trace} [\pi^+_{\xi_n}
\Big[\frac{c(\xi)tAc(\xi)}{(1+\xi_n^2)^2}\Big]\times
\partial_{\xi_n}\sigma_{-1}(({D_t}^*)^{-1})](x_0)d\xi_n\sigma(\xi')dx'\nonumber\\
&=\frac{\pi}{4}{\rm tr}[c(dx_n)tA]\Omega_3dx'\nonumber\\
&=0.
\end{align}
Then, we have\\
\begin{align}\label{60}
\Phi_4=\frac{9}{2}\pi h'(0)\Omega_3dx'.
\end{align}
\noindent {\bf  case c)}~$r=-1,~l=-2,~k=j=|\alpha|=0$.\\
By (\ref{b15}), we get
\begin{align}\label{61}
\Phi_5=-i\int_{|\xi'|=1}\int^{+\infty}_{-\infty}{\rm trace} [\pi^+_{\xi_n}\sigma_{-1}({D_t}^{-1})\times
\partial_{\xi_n}\sigma_{-2}(({D_t}^*)^{-1})](x_0)d\xi_n\sigma(\xi')dx'.
\end{align}
By (\ref{b3}) and (\ref{b4}), Lemma \ref{lem3}, we have
\begin{align}\label{62}
\pi^+_{\xi_n}\sigma_{-1}({D_t}^{-1})|_{|\xi'|=1}=\frac{c(\xi')+ic(dx_n)}{2(\xi_n-i)}.
\end{align}
Since
\begin{equation}\label{63}
\sigma_{-2}(({D_t}^*)^{-1})(x_0)=\frac{c(\xi)\sigma_{0}({D_t}^*)(x_0)c(\xi)}{|\xi|^4}+\frac{c(\xi)}{|\xi|^6}c(dx_n)
\bigg[\partial_{x_n}[c(\xi')](x_0)|\xi|^2-c(\xi)h'(0)|\xi|^2_{\partial_
M}\bigg],
\end{equation}
where
\begin{align}\label{64}
\sigma_{0}({D_t}^*)(x_0)&=\frac{1}{4}\sum_{s,t,i}\omega_{s,t}(e_i)(x_{0})c(e_i)\widehat{c}(e_s)\widehat{c}(e_t)
-\frac{1}{4}\sum_{s,t,i}\omega_{s,t}(e_i)(x_{0})c(e_i)c(e_s)c(e_t)+\overline{t}A(x_{0})\nonumber\\
&=Q_0^{1}(x_0)+Q_0^{2}(x_0)+\overline{t}A(x_{0}),
\end{align}
then
\begin{align}\label{65}
&\partial_{\xi_n}\sigma_{-2}(({D_t}^*)^{-1})(x_0)|_{|\xi'|=1}\nonumber\\
&=
\partial_{\xi_n}\bigg\{\frac{c(\xi)[Q_0^{1}(x_0)+Q_0^{2}(x_0)
+\overline{t}A(x_{0})]c(\xi)}{|\xi|^4}+\frac{c(\xi)}{|\xi|^6}c(dx_n)[\partial_{x_n}[c(\xi')](x_0)|\xi|^2-c(\xi)h'(0)]\bigg\}\nonumber\\
&=\partial_{\xi_n}\bigg\{\frac{[c(\xi)Q_0^{1}(x_0)]c(\xi)}{|\xi|^4}+\frac{c(\xi)}{|\xi|^6}c(dx_n)[\partial_{x_n}[c(\xi')](x_0)|\xi|^2-c(\xi)h'(0)]\bigg\}\nonumber\\
&+\partial_{\xi_n}\frac{c(\xi)Q_0^{2}(x_0)c(\xi)}{|\xi|^4}
+\partial_{\xi_n}\frac{c(\xi)\overline{t}A(x_{0})c(\xi)}{|\xi|^4}.\nonumber\\
\end{align}
By computations, we have
\begin{align}\label{66}
\partial_{\xi_n}\frac{c(\xi)Q_0^{1}(x_0)c(\xi)}{|\xi|^4}=\frac{c(dx_n)Q_0^{1}(x_0)c(\xi)}{|\xi|^4}
+\frac{c(\xi)Q_0^{1}(x_0)c(dx_n)}{|\xi|^4}
-\frac{4\xi_n c(\xi)Q_0^{1}(x_0)c(\xi)}{|\xi|^6};
\end{align}
\begin{align}\label{67}
\partial_{\xi_n}\frac{c(\xi)\overline{t}A(x_{0})c(\xi)}{|\xi|^4}&=\frac{c(dx_n)\overline{t}A(x_{0})c(\xi)}{|\xi|^4}
+\frac{c(\xi)\overline{t}A(x_{0})c(dx_n)}{|\xi|^4}-\frac{4\xi_n c(\xi)\overline{t}A(x_{0})c(\xi)}{|\xi|^4}.
\end{align}
We denote $$q_{-2}^{1}=\frac{c(\xi)Q_0^{2}(x_0)c(\xi)}{|\xi|^4}+\frac{c(\xi)}{|\xi|^6}c(dx_n)[\partial_{x_n}[c(\xi')](x_0)|\xi|^2-c(\xi)h'(0)],$$ then
\begin{align}\label{68}
\partial_{\xi_n}(q_{-2}^{1})&=\frac{1}{(1+\xi_n^2)^3}\bigg[(2\xi_n-2\xi_n^3)c(dx_n)Q_0^{2}c(dx_n)
+(1-3\xi_n^2)c(dx_n)Q_0^{2}c(\xi')\nonumber\\
&+(1-3\xi_n^2)c(\xi')Q_0^{2}c(dx_n)
-4\xi_nc(\xi')Q_0^{2}c(\xi')
+(3\xi_n^2-1){\partial_t}_{x_n}c(\xi')\nonumber\\
&-4\xi_nc(\xi')c(dx_n){\partial}_{x_n}c(\xi')
+2h'(0)c(\xi')+2h'(0)\xi_nc(dx_n)\bigg]\nonumber\\
&+6\xi_nh'(0)\frac{c(\xi)c(dx_n)c(\xi)}{(1+\xi^2_n)^4}.
\end{align}
By (\ref{62}) and (\ref{66}), we have
\begin{align}\label{69}
&{\rm tr}[\pi^+_{\xi_n}\sigma_{-1}({D_t}^{-1})\times
\partial_{\xi_n}\frac{c(\xi)Q_0^{1}c(\xi)}
{|\xi|^4}](x_0)|_{|\xi'|=1}\nonumber\\
&=\frac{-1}{(\xi-i)(\xi+i)^3}{\rm tr}[c(\xi')Q_0^{1}(x_0)]
+\frac{i}{(\xi-i)(\xi+i)^3}{\rm tr}[c(dx_n)Q_0^{1}(x_0)].
\end{align}
By (\ref{49}), we have
\begin{eqnarray}\label{70}
{\rm tr}[\pi^+_{\xi_n}\sigma_{-1}({D_t}^{-1})\times
\partial_{\xi_n}\frac{c(\xi)Q_0^{1}c(\xi)}
{|\xi|^4}](x_0)|_{|\xi'|=1}
=\frac{-1}{(\xi-i)(\xi+i)^3}{\rm tr}[c(\xi')Q_0^{1}(x_0)].
\end{eqnarray}
We note that $i<n,~\int_{|\xi'|=1}\{\xi_{i_{1}}\xi_{i_{2}}\cdots\xi_{i_{2d+1}}\}\sigma(\xi')=0$,
so ${\rm tr }[c(\xi')Q_0^{1}(x_0)]$ has no contribution for computing {\rm case~c)}.
By (\ref{62}) and (\ref{68}), we have
\begin{eqnarray}\label{71}
{\rm tr}[\pi^+_{\xi_n}\sigma_{-1}({D_t}^{-1})\times
\partial_{\xi_n}(q^1_{-2})](x_0)|_{|\xi'|=1}
=\frac{12h'(0)(i\xi^2_n+\xi_n-2i)}{(\xi-i)^3(\xi+i)^3}
+\frac{48h'(0)i\xi_n}{(\xi-i)^3(\xi+i)^4}.
\end{eqnarray}
Then
\begin{eqnarray}\label{72}
-i\Omega_3\int_{\Gamma_+}[\frac{12h'(0)(i\xi_n^2+\xi_n-2i)}
{(\xi_n-i)^3(\xi_n+i)^3}+\frac{48h'(0)i\xi_n}{(\xi_n-i)^3(\xi_n+i)^4}]d\xi_ndx'=
-\frac{9}{2}\pi h'(0)\Omega_3dx'.
\end{eqnarray}
By (\ref{62}) and (\ref{67}), we have
\begin{align}\label{73}
&{\rm tr}[\pi^+_{\xi_n}\sigma_{-1}({D_t}^{-1})\times
\partial_{\xi_n}\frac{c(\xi)\overline{t}Ac(\xi)}
{|\xi|^4}](x_0)|_{|\xi'|=1}\nonumber\\
&=\frac{-1}{(\xi-i)(\xi+i)^3}{\rm tr}[c(\xi')\overline{t}A(x_0)]+\frac{i}{(\xi-i)(\xi+i)^3}{\rm tr}[c(dx_n)\overline{t}A(x_0)].
\end{align}
By $\int_{|\xi'|=1}\{\xi_{i_{1}}\xi_{i_{2}}\cdots\xi_{i_{2d+1}}\}\sigma(\xi')=0$ and (\ref{49}), we have
\begin{align}\label{74}
&-i\int_{|\xi'|=1}\int^{+\infty}_{-\infty}{\rm tr}[\pi^+_{\xi_n}\sigma_{-1}({D_t}^{-1})\times
\partial_{\xi_n}\frac{c(\xi)\overline{t}Ac(\xi)}
{|\xi|^4}](x_0)d\xi_n\sigma(\xi')dx'\nonumber\\
&=-i\int_{|\xi'|=1}\int^{+\infty}_{-\infty}\frac{i}{(\xi-i)(\xi+i)^3}{\rm tr}[c(dx_n)\overline{t}A](x_0)d\xi_n\sigma(\xi')dx'\nonumber\\
&=-\frac{\pi}{4}{\rm tr}[c(dx_n)\overline{t}A]\Omega_3dx'\nonumber\\
&=0.
\end{align}
Then,
\begin{align}\label{75}
\Phi_5=-\frac{9}{2}\pi h'(0)\Omega_3dx'.
\end{align}
So $\Phi=\sum_{i=1}^5\Phi_i=0$.\\
Then, by (\ref{b15})-(\ref{b17}), we obtain Theorem \ref{thmb1}.
\end{proof}
Next, we also prove the Kastler-Kalau-Walze type theorem for $4$-dimensional manifolds with boundary associated to ${D_t}^2$.
By (3.7) and (3.8), we will compute
\begin{equation}\label{76}
\widetilde{{\rm Wres}}[\pi^+{D_t}^{-1}\circ\pi^+{D_t}^{-1}]=\int_M\int_{|\xi'|=1}{\rm
trace}_{\wedge^*T^*M\bigotimes\mathbb{C}}[\sigma_{-4}({D_t}^{-2})]\sigma(\xi)dx+\int_{\partial M}\overline{\Phi},
\end{equation}
where
\begin{align}\label{77}
\overline{\Phi} &=\int_{|\xi'|=1}\int^{+\infty}_{-\infty}\sum^{\infty}_{j, k=0}\sum\frac{(-i)^{|\alpha|+j+k+1}}{\alpha!(j+k+1)!}
\times {\rm trace}_{\wedge^*T^*M\bigotimes\mathbb{C}}[\partial^j_{x_n}\partial^\alpha_{\xi'}\partial^k_{\xi_n}\sigma^+_{r}({D_t}^{-1})(x',0,\xi',\xi_n)
\nonumber\\
&\times\partial^\alpha_{x'}\partial^{j+1}_{\xi_n}\partial^k_{x_n}\sigma_{l}({D_t}^{-1})(x',0,\xi',\xi_n)]d\xi_n\sigma(\xi')dx',
\end{align}
and the sum is taken over $r+l-k-j-|\alpha|=-3,~~r\leq -1,l\leq-1$.\\
\indent By Theorem \ref{thm2}, we compute the interior of $\widetilde{{\rm Wres}}[\pi^+{D_t}^{-1}\circ\pi^+{D_t}^{-1}]$, then
\begin{align}\label{78}
&\int_M\int_{|\xi'|=1}{\rm
trace}_{\wedge^*T^*M\bigotimes\mathbb{C}}[\sigma_{-4}({D_t}^{-2})]\sigma(\xi)dx=32\pi^2\int_{M}
\bigg(
-\frac{4}{3}K\bigg)d{\rm Vol_{M}}.\nonumber\\
\end{align}
\begin{thm}\label{bthm3}
Let $M$ be a $4$-dimensional oriented
compact manifold with boundary $\partial M$ and the metric
$g^{TM}$ be defined as (\ref{b1}), ${D_t}$ be sub-signature operator on $\widetilde{M}$ ($\widetilde{M}$ is a collar neighborhood of $M$) be defined as in (\ref{a8}), (\ref{a9}), then
\begin{align}\label{79}
&\widetilde{{\rm Wres}}[\pi^+{D_t}^{-1}\circ\pi^+{D_t}^{-1}]=32\pi^2\int_{M}\bigg(
-\frac{4}{3}K\bigg)d{\rm Vol_{M}},\nonumber\\
\end{align}
where $K$ is the scalar curvature. In particular, the boundary term vanishes.\\
\end{thm}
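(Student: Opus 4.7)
The plan is to mirror the proof of Theorem \ref{thmb1}, applying the decomposition formula analogous to (\ref{b7})--(\ref{b8}) for the operator composition $\pi^+D_t^{-1}\circ\pi^+D_t^{-1}$. The interior contribution is already identified in (\ref{78}) as $32\pi^2\int_M(-\tfrac{4}{3}K)\,d{\rm Vol}_M$ by Theorem \ref{thm2}, so the entire task reduces to showing that the boundary piece $\int_{\partial M}\overline{\Phi}$ vanishes. The $\overline{\Phi}$ integrand is determined by the same combinatorial sum as (\ref{b15}), but with both factors involving $\sigma_*(D_t^{-1})$ instead of one factor from $\sigma_*((D_t^*)^{-1})$. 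I would therefore split the boundary computation into the same five cases a)\,I), a)\,II), a)\,III), b), c) indexed by the tuples $(r,l,j,k,|\alpha|)$ that sum to $-3$.

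The first observation that saves most of the work is that $\sigma_{-1}(D_t^{-1})=\sigma_{-1}((D_t^*)^{-1})=ic(\xi)/|\xi|^2$ by Lemma \ref{lem3}, so its derivatives and its $\pi^+_{\xi_n}$-projection are identical in both calculations. Consequently the three sub-cases of case a), which only involve $\sigma_{-1}$ of each factor, reproduce verbatim the values $\Phi_1=0$, $\Phi_2=-\tfrac{3}{2}\pi h'(0)\Omega_3 dx'$ and $\Phi_3=\tfrac{3}{2}\pi h'(0)\Omega_3 dx'$ obtained in the proof of Theorem \ref{thmb1}; so their sum cancels.

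The cases b) and c) require $\sigma_{-2}(D_t^{-1})$, and the only substantive change from the earlier proof is that the $\overline{t}A$ contribution inside $\sigma_0(D_t^*)$ is replaced by $tA$ inside $\sigma_0(D_t)$. The main point to verify is that this replacement does not alter the boundary contribution: following the computation done at (\ref{58})--(\ref{59}) for case b) and at (\ref{73})--(\ref{74}) for case c), the piece produced by the $tA$ term reduces to a scalar multiple of ${\rm tr}[c(dx_n)A]$, which vanishes by the trace identities collected in (\ref{49}). Hence the $Q_0^1$ and $Q_0^2$ parts alone govern $\Phi_4$ and $\Phi_5$, and give $\Phi_4=\tfrac{9}{2}\pi h'(0)\Omega_3 dx'$ and $\Phi_5=-\tfrac{9}{2}\pi h'(0)\Omega_3 dx'$ exactly as before.

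Adding the contributions yields $\overline{\Phi}=\sum_{i=1}^{5}\Phi_i=0$, which combined with (\ref{78}) gives (\ref{79}). The only mild technical obstacle is to confirm carefully, via the Clifford trace relations, that replacing $\overline{t}$ with $t$ does not introduce any new nonvanishing term; this is essentially automatic because the complex parameter enters only through the scalar multiple of $A$, whose contribution to each relevant trace was already seen to be zero.
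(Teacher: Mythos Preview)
Your proposal is correct and follows essentially the same approach as the paper's own proof: both reduce to the five cases, observe that $\sigma_{-1}(D_t^{-1})=\sigma_{-1}((D_t^*)^{-1})$ handles cases a)\,I)--III) and b) immediately by reference to the computation for Theorem \ref{thmb1}, and then note that in case c) the sole change $\overline{t}A\mapsto tA$ contributes only through ${\rm tr}[c(dx_n)A]=0$, yielding $\overline{\Phi}_5=-\tfrac{9}{2}\pi h'(0)\Omega_3 dx'$ and hence $\overline{\Phi}=0$.
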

\begin{proof}
When $n=4$, then ${\rm tr}_{\wedge^*T^*M}[{\rm \texttt{id}}]={\rm dim}(\wedge^*(\mathbb{R}^4))=16$, the sum is taken over $
r+l-k-j-|\alpha|=-3,~~r\leq -1,l\leq-1,$ then we have the following five cases:
~\\
\noindent  {\bf case a)~I)}~$r=-1,~l=-1,~k=j=0,~|\alpha|=1$.\\
\noindent By (\ref{77}), we get
\begin{equation}\label{80}
\overline{\Phi}_1=-\int_{|\xi'|=1}\int^{+\infty}_{-\infty}\sum_{|\alpha|=1}
 {\rm tr}[\partial^\alpha_{\xi'}\pi^+_{\xi_n}\sigma_{-1}({D_t}^{-1})\times
 \partial^\alpha_{x'}\partial_{\xi_n}
 \sigma_{-1}({D_t}^{-1})](x_0)d\xi_n\sigma(\xi')dx'.
\end{equation}
\noindent  {\bf case a)~II)}~$r=-1,~l=-1,~k=|\alpha|=0,~j=1$.\\
\noindent By (\ref{77}), we get
\begin{equation}\label{81}
\overline{\Phi}_2=-\frac{1}{2}\int_{|\xi'|=1}\int^{+\infty}_{-\infty} {\rm
trace} [\partial_{x_n}\pi^+_{\xi_n}\sigma_{-1}({D_t}^{-1})\times
\partial_{\xi_n}^2\sigma_{-1}({D_t}^{-1})](x_0)d\xi_n\sigma(\xi')dx'.
\end{equation}
\noindent  {\bf case a)~III)}~$r=-1,~l=-1,~j=|\alpha|=0,~k=1$.\\
\noindent By (\ref{77}), we get
\begin{equation}\label{82}
\overline{\Phi}_3=-\frac{1}{2}\int_{|\xi'|=1}\int^{+\infty}_{-\infty}
{\rm trace} [\partial_{\xi_n}\pi^+_{\xi_n}\sigma_{-1}({D_t}^{-1})\times
\partial_{\xi_n}\partial_{x_n}\sigma_{-1}({D_t}^{-1})](x_0)d\xi_n\sigma(\xi')dx'.
\end{equation}
By Lemma \ref{lem3}, we have $\sigma_{-1}({D_t}^{-1})=\sigma_{-1}(({D_t}^*)^{-1})$.
Similarly, $\sum_{i=1}^3\overline{\Phi}_i=0.$\\
\noindent  {\bf case b)}~$r=-2,~l=-1,~k=j=|\alpha|=0$.\\
\noindent By (\ref{77}), we get
\begin{align}\label{83}
\overline{\Phi}_4&=-i\int_{|\xi'|=1}\int^{+\infty}_{-\infty}{\rm trace} [\pi^+_{\xi_n}\sigma_{-2}({D_t}^{-1})\times
\partial_{\xi_n}\sigma_{-1}({D_t}^{-1})](x_0)d\xi_n\sigma(\xi')dx'.
\end{align}
By Lemma \ref{lem3}, we have $\sigma_{-1}({D_t}^{-1})=\sigma_{-1}(({D_t}^*)^{-1})$.
By (\ref{42})-(\ref{60}), we have\\
\begin{align}\label{b84}
\overline{\Phi}_4=\frac{9}{2}\pi h'(0)\Omega_3dx',
\end{align}
where ${\rm \Omega_{3}}$ is the canonical volume of $S^{3}.$\\
\noindent {\bf  case c)}~$r=-1,~l=-2,~k=j=|\alpha|=0$.\\
By (\ref{77}), we get
\begin{equation}\label{84}
\overline{\Phi}_5=-i\int_{|\xi'|=1}\int^{+\infty}_{-\infty}{\rm trace} [\pi^+_{\xi_n}\sigma_{-1}({D}^{-1})\times
\partial_{\xi_n}\sigma_{-2}({D_t}^{-1})](x_0)d\xi_n\sigma(\xi')dx'.
\end{equation}
By (\ref{b3}) and (\ref{b4}), Lemma \ref{lem3}, we have
\begin{equation}\label{85}
\pi^+_{\xi_n}\sigma_{-1}({D_t}^{-1})|_{|\xi'|=1}=\frac{c(\xi')+ic(dx_n)}{2(\xi_n-i)}.
\end{equation}
Since
\begin{equation}\label{86}
\sigma_{-2}({D_t}^{-1})(x_0)=\frac{c(\xi)\sigma_{0}({D_t})(x_0)c(\xi)}{|\xi|^4}+\frac{c(\xi)}{|\xi|^6}c(dx_n)
[\partial_{x_n}[c(\xi')](x_0)|\xi|^2-c(\xi)h'(0)|\xi|^2_{\partial
M}],
\end{equation}
where
\begin{align}\label{87}
\sigma_{0}({D_t})(x_0)&=\frac{1}{4}\sum_{s,t,i}\omega_{s,t}(e_i)(x_{0})c(e_i)\widehat{c}(e_s)\widehat{c}(e_t)
-\frac{1}{4}\sum_{s,t,i}\omega_{s,t}(e_i)(x_{0})c(e_i)c(e_s)c(e_t)\nonumber+tA(x_{0})\nonumber\\
&=Q_0^{1}(x_0)+Q_0^{2}(x_0)+tA(x_{0}),
\end{align}
then
\begin{align}\label{88}
\partial_{\xi_n}\sigma_{-2}({D_t}^{-1})(x_0)|_{|\xi'|=1}&=
\partial_{\xi_n}\bigg\{\frac{c(\xi)[Q_0^{1}(x_0)+Q_0^{2}(x_0)
+tA(x_{0})]c(\xi)}{|\xi|^4}\nonumber\\
&+\frac{c(\xi)}{|\xi|^6}c(dx_n)[\partial_{x_n}[c(\xi')](x_0)|\xi|^2-c(\xi)h'(0)]\bigg\}\nonumber\\
&=\partial_{\xi_n}\bigg\{\frac{c(\xi)Q_0^{1}(x_0)]c(\xi)}{|\xi|^4}+\frac{c(\xi)}{|\xi|^6}c(dx_n)[\partial_{x_n}[c(\xi')](x_0)|\xi|^2-c(\xi)h'(0)]\bigg\}\nonumber\\
&+\partial_{\xi_n}\frac{c(\xi)Q_0^{2}(x_0)c(\xi)}{|\xi|^4}
+\partial_{\xi_n}\frac{c(\xi)tA(x_{0})c(\xi)}{|\xi|^4}.
\end{align}
By computations, we have
\begin{eqnarray}\label{89}
\partial_{\xi_n}\frac{c(\xi)Q_0^{1}(x_0)c(\xi)}{|\xi|^4}=\frac{c(dx_n)Q_0^{1}(x_0)c(\xi)}{|\xi|^4}
+\frac{c(\xi)Q_0^{1}(x_0)c(dx_n)}{|\xi|^4}
-\frac{4\xi_n c(\xi)Q_0^{1}(x_0)c(\xi)}{|\xi|^6};
\end{eqnarray}
\begin{align}\label{90}
\partial_{\xi_n}\frac{c(\xi)tA(x_{0})c(\xi)}{|\xi|^4}=\frac{c(dx_n)tA(x_{0})c(\xi)}{|\xi|^4}
+\frac{c(\xi)tA(x_{0})c(dx_n)}{|\xi|^4}-\frac{4\xi_n c(\xi)tA(x_{0})c(\xi)}{|\xi|^4}.
\end{align}
We denote $$q_{-2}^{1}=\frac{c(\xi)Q_0^{2}(x_0)c(\xi)}{|\xi|^4}+\frac{c(\xi)}{|\xi|^6}c(dx_n)[\partial_{x_n}[c(\xi')](x_0)|\xi|^2-c(\xi)h'(0)],$$ then
\begin{align}\label{91}
\partial_{\xi_n}(q_{-2}^{1})&=\frac{1}{(1+\xi_n^2)^3}\bigg[(2\xi_n-2\xi_n^3)c(dx_n)Q_0^{2}c(dx_n)
+(1-3\xi_n^2)c(dx_n)Q_0^{2}c(\xi')\nonumber\\
&+ (1-3\xi_n^2)c(\xi')Q_0^{2}c(dx_n)
-4\xi_nc(\xi')Q_0^{2}c(\xi')
+(3\xi_n^2-1)\partial_{x_n}c(\xi')\nonumber\\
&-4\xi_nc(\xi')c(dx_n)\partial_{x_n}c(\xi')
+2h'(0)c(\xi')+2h'(0)\xi_nc(dx_n)\bigg]\nonumber\\
&+6\xi_nh'(0)\frac{c(\xi)c(dx_n)c(\xi)}{(1+\xi^2_n)^4}.
\end{align}
By (\ref{85}) and (\ref{89}), we have
\begin{align}\label{92}
&{\rm tr}[\pi^+_{\xi_n}\sigma_{-1}({D_t}^{-1})\times
\partial_{\xi_n}\frac{c(\xi)Q_0^{1}c(\xi)}
{|\xi|^4}](x_0)|_{|\xi'|=1}\nonumber\\
&=\frac{-1}{(\xi-i)(\xi+i)^3}{\rm tr}[c(\xi')Q_0^{1}(x_0)]
+\frac{i}{(\xi-i)(\xi+i)^3}{\rm tr}[c(dx_n)Q_0^{1}(x_0)].
\end{align}
By (\ref{49}), we have
\begin{align}\label{93}
{\rm tr}[\pi^+_{\xi_n}\sigma_{-1}({D_t}^{-1})\times
\partial_{\xi_n}\frac{c(\xi)Q_0^{1}c(\xi)}
{|\xi|^4}](x_0)|_{|\xi'|=1}
=\frac{-1}{(\xi-i)(\xi+i)^3}{\rm tr}[c(\xi')Q_0^{1}(x_0)].
\end{align}
We note that $i<n,~\int_{|\xi'|=1}\{\xi_{i_{1}}\xi_{i_{2}}\cdots\xi_{i_{2d+1}}\}\sigma(\xi')=0$,
so ${\rm tr }[c(\xi')Q_0^{1}(x_0)]$ has no contribution for computing case c).
By (\ref{85}) and (\ref{91}), we have
\begin{align}\label{94}
{\rm tr}[\pi^+_{\xi_n}\sigma_{-1}({D_t}^{-1})\times
\partial_{\xi_n}(q^1_{-2})](x_0)|_{|\xi'|=1}
=\frac{12h'(0)(i\xi^2_n+\xi_n-2i)}{(\xi-i)^3(\xi+i)^3}
+\frac{48h'(0)i\xi_n}{(\xi-i)^3(\xi+i)^4},
\end{align}
then
\begin{align}\label{95}
-i\Omega_3\int_{\Gamma_+}[\frac{12h'(0)(i\xi_n^2+\xi_n-2i)}
{(\xi_n-i)^3(\xi_n+i)^3}+\frac{48h'(0)i\xi_n}{(\xi_n-i)^3(\xi_n+i)^4}]d\xi_ndx'=
-\frac{9}{2}\pi h'(0)\Omega_4dx'.
\end{align}
By (\ref{85}) and (\ref{90}), we have
\begin{align}\label{96}
&{\rm tr}[\pi^+_{\xi_n}\sigma_{-1}({D_t}^{-1})\times
\partial_{\xi_n}\frac{c(\xi)tAc(\xi)}
{|\xi|^4}](x_0)|_{|\xi'|=1}\nonumber\\
&=\frac{-1}{(\xi-i)(\xi+i)^3}{\rm tr}[c(\xi')tA(x_0)]+\frac{i}{(\xi-i)(\xi+i)^3}{\rm tr}[c(dx_n)tA(x_0)].
\end{align}
By $\int_{|\xi'|=1}\{\xi_{i_{1}}\xi_{i_{2}}\cdots\xi_{i_{2d+1}}\}\sigma(\xi')=0$ and (\ref{49}), we have
\begin{align}\label{97}
&-i\int_{|\xi'|=1}\int^{+\infty}_{-\infty}{\rm tr}[\pi^+_{\xi_n}\sigma_{-1}({D_t}^{-1})\times
\partial_{\xi_n}\frac{c(\xi)tAc(\xi)}
{|\xi|^4}](x_0)d\xi_n\sigma(\xi')dx'\nonumber\\
&=-i\int_{|\xi'|=1}\int^{+\infty}_{-\infty}\frac{i}{(\xi-i)(\xi+i)^3}{\rm tr}[c(dx_n)tA](x_0)d\xi_n\sigma(\xi')dx'\nonumber\\
&=-\frac{\pi}{4}{\rm tr}[c(dx_n)tA]\Omega_3dx'\nonumber\\
&=0.
\end{align}
Then,
\begin{align}\label{98}
\overline{\Phi}_5=-\frac{9}{2}\pi h'(0)\Omega_3dx'.
\end{align}
So $\overline{\Phi}=\sum_{i=1}^5\overline{\Phi}_i=0$.
By (\ref{76})-(\ref{78}), we obtain Theorem \ref{bthm3}.\\
\end{proof}

\section{A Kastler-Kalau-Walze type theorem for $6$-dimensional manifolds with boundary }
\label{section:4}
Firstly, we prove the Kastler-Kalau-Walze type theorems for $6$-dimensional manifolds with boundary. From \cite{Wa5}, we know that
\begin{equation}\label{c1}
\widetilde{{\rm Wres}}[\pi^+{D_t}^{-1}\circ\pi^+({D_t}^{*}{D_t}
      {D_t}^{*})^{-1}]=\int_M\int_{|\xi'|=1}{\rm
trace}_{\wedge ^*T^*M\bigotimes\mathbb{C}}[\sigma_{-4}(({D_t}^*{D_t})^{-2})]\sigma(\xi)dx+\int_{\partial M}\Psi,
\end{equation}
where
\begin{align}\label{c2}
\Psi &=\int_{|\xi'|=1}\int^{+\infty}_{-\infty}\sum^{\infty}_{j, k=0}\sum\frac{(-i)^{|\alpha|+j+k+1}}{\alpha!(j+k+1)!}
\times {\rm trace}_{\wedge ^*T^*M\bigotimes\mathbb{C}}[\partial^j_{x_n}\partial^\alpha_{\xi'}\partial^k_{\xi_n}\sigma^+_{r}({D_t}^{-1})(x',0,\xi',\xi_n)
\nonumber\\
&\times\partial^\alpha_{x'}\partial^{j+1}_{\xi_n}\partial^k_{x_n}\sigma_{l}
(({D_t}^{*}{D_t}
      {D_t}^{*})^{-1})(x',0,\xi',\xi_n)]d\xi_n\sigma(\xi')dx',
\end{align}
and the sum is taken over $r+\ell-k-j-|\alpha|-1=-6, \ r\leq-1, \ell\leq -3$.\\
\indent By Theorem \ref{thm2}, we compute the interior term of (\ref{c1}), then
\begin{align}\label{c3}
&\int_M\int_{|\xi'|=1}{\rm
trace}_{\wedge^*T^*M\bigotimes\mathbb{C}}[\sigma_{-4}(({D_t}^*{D_t})^{-2})]\sigma(\xi)dx'\nonumber\\
&=128\pi^3\int_{M}\bigg(
-\frac{16}{3}K-(\overline{t}-t)^2\sum_{i=1}^{6}\sum_{\alpha=1}^{k}|S(e_i)f_\alpha|^2\bigg)d{\rm Vol_{M}},
\end{align}
(\ref{c3}) holds by the similar reason for (\ref{b16}).\\
Next, we compute $\int_{\partial M} \Psi$. Let $\xi=\sum_{j}\xi_{j}dx_{j}$ and $\nabla^L_{\partial_{i}}\partial_{j}=\sum_{k}\Gamma_{ij}^{k}\partial_{k}$,  we denote that
\begin{align}
\label{a19}
&\sigma_{i}=-\frac{1}{4}\sum_{s,t}\omega_{s,t}
(e_i)c(e_s)c(e_t)
;~~~a_{i}=\frac{1}{4}\sum_{s,t}\omega_{s,t}
(e_i)\widehat{c}(e_s)\widehat{c}(e_t);\nonumber\\
&\xi^{j}=g^{ij}\xi_{i};~~~~\Gamma^{k}=g^{ij}\Gamma_{ij}^{k};~~~~\sigma^{j}=g^{ij}\sigma_{i};
~~~~a^{j}=g^{ij}a_{i}.
\end{align}
\indent Since $E$ is globally
defined on $M$, taking normal coordinates at $x_0$, we have
$\sigma^{i}(x_0)=0$, $a^{i}(x_0)=0$, $\partial^{j}[c(\partial_{j})](x_0)=0$,
$\Gamma^k(x_0)=0$, $g^{ij}(x_0)=\delta^j_i$, then by computations, we get
\begin{align}\label{c4}
{D_t}^*{D_t}{D_t}^*
&=\sum^{n}_{i=1}c(e_{i})\langle e_{i},dx_{l}\rangle(-g^{ij}\partial_{l}\partial_{i}\partial_{j})
+\sum^{n}_{i=1}c(e_{i})\langle e_{i},dx_{l}\rangle \bigg\{-(\partial_{l}g^{ij})\partial_{i}\partial_{j}-g^{ij}\bigg(4(\sigma_{i}
+a_{i})\partial_{j}-2\Gamma^{k}_{ij}\partial_{k}\bigg)\partial_{l}\bigg\} \nonumber\\
&+\sum^{n}_{i=1}c(e_{i})\langle e_{i},dx_{l}\rangle \bigg\{-2(\partial_{l}g^{ij})(\sigma_{i}+a_i)\partial_{j}+g^{ij}
(\partial_{l}\Gamma^{k}_{ij})\partial_{k}-2g^{ij}[(\partial_{l}\sigma_{i})
+(\partial_{l}a_i)]\partial_{j}
+(\partial_{l}g^{ij})\Gamma^{k}_{ij}\partial_{k}\nonumber\\
&+\sum_{j,k}\Big[\partial_{l}\Big(t\sum_{i=1}^{n}\sum_{\alpha=1}^{k}c(e_i)\widehat{c}(S(e_i)f_\alpha)\widehat{c}(f_\alpha)c(e_{j})-c(e_{j})\overline{t}\sum_{i=1}^{n}\sum_{\alpha=1}^{k}c(e_i)\widehat{c}(S(e_i)f_\alpha)\widehat{c}(f_\alpha)\Big)\Big]\langle e_{j},dx^{k}\rangle\partial_{k}\nonumber\\
&+\sum_{j,k}\Big(t\sum_{i=1}^{n}\sum_{\alpha=1}^{k}c(e_i)\widehat{c}(S(e_i)f_\alpha)\widehat{c}(f_\alpha)c(e_{j})-c(e_{j})\overline{t}\sum_{i=1}^{n}\sum_{\alpha=1}^{k}c(e_i)\widehat{c}(S(e_i)f_\alpha)\widehat{c}(f_\alpha)\Big)\Big[\partial_{l}\langle e_{j},dx^{k}\rangle\Big]\partial_{k} \bigg\}\nonumber\\
&+\sum^{n}_{i=1}c(e_{i})\langle e_{i},dx_{l}\rangle\partial_{l}\bigg\{-g^{ij}\Big[(\partial_{i}\sigma_{j})+(\partial_{i}a_{j})+\sigma_{i}\sigma_{j}+\sigma_{i}a_{j}+a_{i}\sigma_{j}+a_{i}a_{j}-\Gamma_{ij}^{k}\sigma_{k}-\Gamma_{ij}^{k}a_{k}\nonumber\\
&+\sum_{i,j}g^{ij}\Big[t\sum_{i=1}^{n}\sum_{\alpha=1}^{k}c(e_i)\widehat{c}(S(e_i)f_\alpha)\widehat{c}(f_\alpha)c(\partial_{i})\sigma_{i}
+t\sum_{i=1}^{n}\sum_{\alpha=1}^{k}c(e_i)\widehat{c}(S(e_i)f_\alpha)\widehat{c}(f_\alpha)c(\partial_{i})a_{i}\nonumber\\
&+c(\partial_{i})\partial_{i}(\overline{t}\sum_{i=1}^{n}\sum_{\alpha=1}^{k}c(e_i)\widehat{c}(S(e_i)f_\alpha)\widehat{c}(f_\alpha))+c(\partial_{i})\sigma_{i}\overline{t}\sum_{i=1}^{n}\sum_{\alpha=1}^{k}c(e_i)\widehat{c}(S(e_i)f_\alpha)\widehat{c}(f_\alpha)\nonumber\\
&+c(\partial_{i})a_{i}\overline{t}\sum_{i=1}^{n}\sum_{\alpha=1}^{k}c(e_i)\widehat{c}(S(e_i)f_\alpha)\widehat{c}(f_\alpha)\Big]+\frac{1}{4}K-\frac{1}{8}\sum_{ijkl}R_{ijkl}\widehat{c}(e_i)\widehat{c}(e_j)
c(e_k)c(e_l)\nonumber\\
&-[\overline{t}\sum_{i=1}^{n}\sum_{\alpha=1}^{k}c(e_i)\widehat{c}(S(e_i)f_\alpha)\widehat{c}(f_\alpha)]^2\bigg\}+\Big[(\sigma_{i}+a_{i})+(\overline{t}\sum_{i=1}^{n}\sum_{\alpha=1}^{k}c(e_i)\widehat{c}(S(e_i)f_\alpha)\widehat{c}(f_\alpha))\Big](-g^{ij}{\partial_t}_{i}{\partial_t}_{j})\nonumber\\
&+\sum^{n}_{i=1}c(e_{i})\langle e_{i},dx_{l}\rangle \bigg\{2\sum_{j,k}\Big[t\sum_{i=1}^{n}\sum_{\alpha=1}^{k}c(e_i)\widehat{c}(S(e_i)f_\alpha)\widehat{c}(f_\alpha)c(e_{j})+c(e_{j})\overline{t}\sum_{i=1}^{n}\sum_{\alpha=1}^{k}c(e_i)\widehat{c}(S(e_i)f_\alpha)\widehat{c}(f_\alpha)\Big]\nonumber\\
&\times\langle e_{i},dx_{k}\rangle\bigg\}_{l}\partial_{k}
+\Big[(\sigma_{i}+a_{i})+(\overline{t}\sum_{i=1}^{n}\sum_{\alpha=1}^{k}c(e_i)\widehat{c}(S(e_i)f_\alpha)\widehat{c}(f_\alpha))\Big]
\bigg\{-\sum_{i,j}g^{ij}\Big[2\sigma_{i}\partial_{j}+2a_{i}\partial_{j}
-\Gamma_{ij}^{k}\partial_{k}\nonumber\\
&+(\partial_{i}\sigma_{j})
+(\partial_{i}a_{j})+\sigma_{i}\sigma_{j}+\sigma_{i}a_{j}+a_{i}\sigma_{j}+a_{i}a_{j} -\Gamma_{ij}^{k}\sigma_{k}-\Gamma_{ij}^{k}a_{k}\Big]-\sum_{i,j}g^{ij}\Big[c(\partial_{i})\nonumber\\
&\overline{t}\sum_{i=1}^{n}\sum_{\alpha=1}^{k}c(e_i)\widehat{c}(S(e_i)f_\alpha)\widehat{c}(f_\alpha)
+t\sum_{i=1}^{n}\sum_{\alpha=1}^{k}c(e_i)\widehat{c}(S(e_i)f_\alpha)\widehat{c}(f_\alpha)c(\partial_{i})\Big]{\partial_t}_{j}\nonumber\\
&+\sum_{i,j}g^{ij}\Big[t\sum_{i=1}^{n}\sum_{\alpha=1}^{k}c(e_i)\widehat{c}(S(e_i)f_\alpha)\widehat{c}(f_\alpha)c(\partial_{i})\sigma_{i}+t\sum_{i=1}^{n}\sum_{\alpha=1}^{k}c(e_i)\widehat{c}(S(e_i)f_\alpha)\widehat{c}(f_\alpha)c(\partial_{i})a_{i}\nonumber\\
&+c(\partial_{i})\partial_{i}(\overline{t}\sum_{i=1}^{n}\sum_{\alpha=1}^{k}c(e_i)\widehat{c}(S(e_i)f_\alpha)\widehat{c}(f_\alpha))-c(\partial_{i})\sigma_{i}\overline{t}\sum_{i=1}^{n}\sum_{\alpha=1}^{k}c(e_i)\widehat{c}(S(e_i)f_\alpha)\widehat{c}(f_\alpha)\nonumber\\
&+c(\partial_{i})\partial_{i}\overline{t}\sum_{i=1}^{n}\sum_{\alpha=1}^{k}c(e_i)\widehat{c}(S(e_i)f_\alpha)\widehat{c}(f_\alpha)c(\partial_{i})\sigma_{i}\overline{t}\sum_{i=1}^{n}\sum_{\alpha=1}^{k}c(e_i)\widehat{c}(S(e_i)f_\alpha)\widehat{c}(f_\alpha)\nonumber\\
&+c(\partial_{i})a_{i}\overline{t}\sum_{i=1}^{n}\sum_{\alpha=1}^{k}c(e_i)\widehat{c}(S(e_i)f_\alpha)\widehat{c}(f_\alpha)\Big]+\frac{1}{4}K-\frac{1}{8}\sum_{ijkl}R_{ijkl}\widehat{c}(e_i)\widehat{c}(e_j)
c(e_k)c(e_l)\nonumber\\
&-[t\sum_{i=1}^{n}\sum_{\alpha=1}^{k}c(e_i)\widehat{c}(S(e_i)f_\alpha)\widehat{c}(f_\alpha)]^2\bigg\}.\nonumber\\
\end{align}
Then, we obtain
\begin{lem}\label{clem1} The following identities hold:
\begin{align}\label{c6}
\sigma_2({D_t}^*{D_t}{D_t}^*)&=\sum_{i,j,l}c(dx_{l})\partial_{l}(g^{ij})\xi_{i}\xi_{j} +c(\xi)(4\sigma^k+4a^k-2\Gamma^k)\xi_{k}+2[|\xi|^2\overline{t}A-c(\xi)tAc(\xi)]\nonumber\\
&+\frac{1}{4}|\xi|^2\sum_{s,t,l}\omega_{s,t}
(e_l)[c(e_l)\widehat{c}(e_s)\widehat{c}(e_t)-c(e_l)c(e_s)c(e_t)]
+|\xi|^2(\overline{t}A)^2;\nonumber\\
\sigma_{3}({D_t}^*{D_t}{D_t}^*)
&=ic(\xi)|\xi|^{2}.
\end{align}
\end{lem}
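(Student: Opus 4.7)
The plan is to extract the two homogeneous symbol components $\sigma_3$ and $\sigma_2$ of $D_t^*D_tD_t^*$ directly from the explicit operator expansion laid out in \eqref{c4}. Since a differential operator of order $m$ has total symbol obtained by replacing $\partial_j$ with $i\xi_j$, the homogeneous pieces are read off by collecting the terms of \eqref{c4} with three and with two explicit derivative factors, respectively. The computation is implicitly performed in normal coordinates at the fixed point $x_0$, where $g^{ij}(x_0)=\delta_{ij}$ and $\Gamma^k(x_0)=\sigma^i(x_0)=a^i(x_0)=0$, so the identities of the lemma are to be read in that frame.

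For $\sigma_3$, only the leading triple-derivative term $\sum_i c(e_i)\langle e_i,dx_l\rangle(-g^{ij}\partial_l\partial_i\partial_j)$ contributes. Under symbolization $\partial_l\partial_i\partial_j\mapsto(i\xi_l)(i\xi_i)(i\xi_j)=-i\xi_l\xi_i\xi_j$, combined with $\sum_l\xi_l c(dx_l)=c(\xi)$ and $g^{ij}\xi_i\xi_j=|\xi|^2$, this yields $\sigma_3=ic(\xi)|\xi|^2$. Consistency is provided by the identity $(ic(\xi))^3=i|\xi|^2c(\xi)$ following from $c(\xi)^2=-|\xi|^2$, which agrees with the principal symbols of Lemma \ref{lem2} composed as $\sigma_1(D_t^*)\sigma_1(D_t)\sigma_1(D_t^*)$.

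For $\sigma_2$, I sweep through \eqref{c4} and collect every term containing exactly two explicit derivative factors, then symbolize. The contributions organize into four natural buckets: (a) the metric-derivative term $-(\partial_l g^{ij})\partial_i\partial_j$ produces $\sum c(dx_l)\partial_l(g^{ij})\xi_i\xi_j$; (b) the terms involving the connection symbols $\sigma_i$, $a_i$ and Christoffel symbols $\Gamma_{ij}^k$ paired with a derivative pair combine, after index raising and contraction with $c(\xi)$, to $c(\xi)(4\sigma^k+4a^k-2\Gamma^k)\xi_k$; (c) the pairings of the zeroth-order operators $tA$ and $\overline tA$ with a second-order Laplacian-type factor $-g^{ij}\partial_i\partial_j$ yield $2[|\xi|^2\overline tA-c(\xi)tAc(\xi)]$; and (d) the leftover Lichnerowicz-type residues from the $(d+\delta)^2$ piece of $D_t^*D_t$ combined with $|\xi|^2$ supply $\tfrac14|\xi|^2\sum\omega_{s,t}(e_l)[c(e_l)\widehat{c}(e_s)\widehat{c}(e_t)-c(e_l)c(e_s)c(e_t)]+|\xi|^2(\overline tA)^2$.

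The main obstacle will be the non-commutativity bookkeeping in bucket (c). Writing $D_t^*D_tD_t^*=(B+\overline tA)(B+tA)(B+\overline tA)$ with $B=d+\delta$, the order-two summands are $B^2\overline tA$, $\overline tAB^2$, and $BtAB$; because $c(\xi)$ does not commute with $A$, the symbol $-c(\xi)tAc(\xi)$ produced by $BtAB$ must be kept in its sandwiched form and cannot be collapsed against the $|\xi|^2\overline tA$ contributions. Tracking exactly which factor in the triple product supplies each derivative, and verifying the combinatorial coefficient in $2[|\xi|^2\overline tA-c(\xi)tAc(\xi)]$ by cross-matching with the corresponding lines of \eqref{c4}, is the only genuinely delicate step. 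Once this is handled, buckets (a), (b), and (d) follow mechanically from \eqref{c4}, and the two identities of Lemma \ref{clem1} are assembled.
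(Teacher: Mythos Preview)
Your plan is exactly the paper's: write out the third–order differential operator $D_t^*D_tD_t^*$ as in \eqref{c4} and then read off $\sigma_3$ and $\sigma_2$ by counting derivative factors, so there is nothing methodologically new to add. One caution on bucket (d): the summand $|\xi|^2(\overline tA)^2$ cannot come from ``Lichnerowicz residues of the $(d+\delta)^2$ piece'' since that piece carries no $A$; a clean triple–product count $\sigma_1(D_t^*)\sigma_1(D_t)\sigma_0(D_t^*)+\sigma_1(D_t^*)\sigma_0(D_t)\sigma_1(D_t^*)+\sigma_0(D_t^*)\sigma_1(D_t)\sigma_1(D_t^*)$ yields $2|\xi|^2\overline tA-c(\xi)tAc(\xi)$ for the $A$–terms, so when you carry out the extraction from \eqref{c4} check that line carefully rather than forcing it to match the displayed formula (the term is in any case trace–orthogonal to everything that appears in the later computation of $\Psi_4$, so it does not affect the theorem).
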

Write
\begin{align}\label{c7}
\sigma({D_t}^*{D_t}{D_t}^*)&=p_3+p_2+p_1+p_0;
~\sigma(({D_t}^*{D_t}{D_t}^*)^{-1})=\sum^{\infty}_{j=3}q_{-j}.
\end{align}
By the composition formula of pseudodifferential operators, we have
\begin{align}\label{c8}
1=\sigma(({D_t}^*{D_t}{D_t}^*)\circ ({D_t}^*{D_t}{D_t}^*)^{-1})&=
\sum_{\alpha}\frac{1}{\alpha!}\partial^{\alpha}_{\xi}
[\sigma({D_t}^*{D_t}{D_t}^*)]D^{\alpha}_{x}
[({D_t}^*{D_t}{D_t}^*)^{-1}] \nonumber\\
&=(p_3+p_2+p_1+p_0)(q_{-3}+q_{-4}+q_{-5}+\cdots)\nonumber\\
&+\sum_j(\partial_{\xi_j}p_3+\partial_{\xi_j}p_2+\partial_{\xi_j}p_1+\partial_{\xi_j}p_0)\nonumber\\
&(D_{x_j}q_{-3}+D_{x_j}q_{-4}+D_{x_j}q_{-5}+\cdots) \nonumber\\
&=p_3q_{-3}+(p_3q_{-4}+p_2q_{-3}+\sum_j\partial_{\xi_j}p_3D_{x_j}q_{-3})+\cdots,
\end{align}
by (\ref{c8}), we have
\begin{equation}\label{c9}
q_{-3}=p_3^{-1};~q_{-4}=-p_3^{-1}[p_2p_3^{-1}+\sum_j\partial_{\xi_j}p_3D_{x_j}(p_3^{-1})].
\end{equation}
By Lemma \ref{clem1}, we have some symbols of operators.
\begin{lem}\label{clem2} The following identities hold:
\begin{align}\label{c10}
\sigma_{-3}(({D_t}^*{D_t}{D_t}^*)^{-1})&=\frac{ic(\xi)}{|\xi|^{4}};\nonumber\\
\sigma_{-4}(({D_t}^*{D_t}{D_t}^*)^{-1})&=
\frac{c(\xi)\sigma_{2}({D_t}^*{D_t}{D_t}^*)c(\xi)}{|\xi|^8}
+\frac{ic(\xi)}{|\xi|^8}\Big(|\xi|^4c(dx_n)\partial_{x_n}c(\xi')
-2h'(0)c(dx_n)c(\xi)\nonumber\\
&+2\xi_{n}c(\xi)\partial_{x_n}c(\xi')+4\xi_{n}h'(0)\Big).
\end{align}
\end{lem}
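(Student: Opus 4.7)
\noindent\textbf{Proof proposal for Lemma \ref{clem2}.}

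The plan is to apply the iterative composition formula (\ref{c9}) to the third-order operator $D_t^*D_tD_t^*$, whose full symbol $p_3+p_2+p_1+p_0$ has leading part $p_3=ic(\xi)|\xi|^2$ by Lemma \ref{clem1}. The first formula falls out immediately: since $c(\xi)^2=-|\xi|^2$, one inverts $p_3$ by the identity
\begin{equation*}
p_3^{-1}=\bigl(ic(\xi)|\xi|^2\bigr)^{-1}=\frac{1}{i|\xi|^2}\cdot\frac{-c(\xi)}{|\xi|^2}=\frac{ic(\xi)}{|\xi|^4},
\end{equation*}
which is precisely the claimed expression for $\sigma_{-3}((D_t^*D_tD_t^*)^{-1})=q_{-3}$.

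For the second identity I would use $q_{-4}=-p_3^{-1}p_2p_3^{-1}-p_3^{-1}\sum_j\partial_{\xi_j}p_3\,D_{x_j}(p_3^{-1})$ and split the computation into two parts. The "interior" contribution is easy: using $i\cdot i=-1$, one finds
\begin{equation*}
-p_3^{-1}\,p_2\,p_3^{-1}=\frac{c(\xi)\,\sigma_2(D_t^*D_tD_t^*)\,c(\xi)}{|\xi|^8},
\end{equation*}
which reproduces the first summand in the statement. The "derivative" contribution requires computing $\partial_{\xi_j}p_3=i\bigl[c(dx_j)|\xi|^2+2\xi^j c(\xi)\bigr]$ and $D_{x_j}(p_3^{-1})=\partial_{x_j}(c(\xi)/|\xi|^4)$, then specializing at the chosen point $x_0\in\partial M$ in the normal coordinates of Section~3. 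By Lemma \ref{le:32} all tangential derivatives of $|\xi|^2$ and $c(\xi)$ vanish at $x_0$, so only the $j=n$ summand survives and produces the factors $h'(0)|\xi'|^2$ and $\partial_{x_n}c(\xi')$.

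Assembling the $j=n$ contribution and simplifying with $c(\xi)^2=-|\xi|^2$ (which in particular turns $c(\xi)^3$ into $-|\xi|^2 c(\xi)$) will yield exactly the four terms inside the parentheses of the stated formula, each paired with its characteristic prefactor $c(dx_n)\partial_{x_n}c(\xi')$, $h'(0)c(dx_n)c(\xi)$, $\xi_n c(\xi)\partial_{x_n}c(\xi')$, and $\xi_n h'(0)$. The main obstacle is purely bookkeeping: keeping the non-commuting Clifford factors in the correct order while tracking the cancellations from $c(\xi)^2=-|\xi|^2$ and collecting common denominators to reach the compact $|\xi|^{-8}$ form in the statement. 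No new geometric input beyond Lemmas \ref{le:32}--\ref{lem1} and Lemma \ref{clem1} is needed.
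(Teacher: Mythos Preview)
Your proposal is correct and follows exactly the route the paper implicitly uses: it simply says ``By Lemma \ref{clem1}, we have some symbols of operators'' and states the lemma, so the intended argument is precisely the one you outline---invert $p_3$ directly, apply the recursion (\ref{c9}) for $q_{-4}$, and specialize at $x_0$ via Lemma~\ref{le:32} so that only the $j=n$ derivative survives. Your bookkeeping remark is apt, and the paper's later display (\ref{c24}) confirms the same decomposition you describe.
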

\begin{thm}\label{cthm1}
Let $M$ be a $6$-dimensional
compact oriented manifold with boundary $\partial M$ and the metric
$g^{TM}$ be defined as (\ref{b1}). Let ${D_t}$ and ${D_t}^*$ be sub-signature operators on $\widetilde{M}$ ($\widetilde{M}$ is a collar neighborhood of $M$) as in (\ref{a8}), (\ref{a9}), then
\begin{align}\label{c11}
&\widetilde{{\rm Wres}}[\pi^+{D_t}^{-1}\circ\pi^+({D_t}^{*}{D_t}
      {D_t}^{*})^{-1}]\nonumber\\
&=128\pi^3\int_{M}\bigg(
-\frac{16}{3}K-(\overline{t}-t)^2\sum_{i=1}^{6}\sum_{\alpha=1}^{k}|S(e_i)f_\alpha|^2\bigg)d{\rm Vol_{M}}+\int_{\partial M}\bigg((\frac{65}{8}-\frac{41}{8}i)\pi h'(0)\bigg)\Omega_4d{\rm Vol_{M}},
\end{align}
where $K$ is the scalar curvature.
\end{thm}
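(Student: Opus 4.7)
The plan is to apply the boundary decomposition (\ref{c1})--(\ref{c2}) to split $\widetilde{{\rm Wres}}[\pi^+{D_t}^{-1}\circ\pi^+({D_t}^*{D_t}{D_t}^*)^{-1}]$ into an interior volume integral plus the boundary defect $\int_{\partial M}\Psi$. The interior integrand $\sigma_{-4}(({D_t}^*{D_t})^{-2})$ carries no boundary data, and its evaluation in (\ref{c3}) is exactly the $n=6$ instance of Theorem \ref{thm2}. This immediately supplies the $-\tfrac{16}{3}K$ and $-(\overline{t}-t)^2\sum|S(e_i)f_\alpha|^2$ terms in (\ref{c11}), so the entire substantive content of the theorem is the verification that $\int_{\partial M}\Psi=\int_{\partial M}(\tfrac{65}{8}-\tfrac{41}{8}i)\pi h'(0)\,\Omega_4\,d{\rm Vol}_M$.

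For the boundary analysis, I would first enumerate the admissible tuples $(r,\ell,k,j,|\alpha|)$ subject to $r+\ell-k-j-|\alpha|-1=-6$, $r\le -1$, $\ell\le -3$. This yields exactly five cases: (A) $r=-1,\ell=-3,|\alpha|=1,j=k=0$; (B) $r=-1,\ell=-3,j=1,k=|\alpha|=0$; (C) $r=-1,\ell=-3,k=1,j=|\alpha|=0$; (D) $r=-2,\ell=-3,k=j=|\alpha|=0$; (E) $r=-1,\ell=-4,k=j=|\alpha|=0$. In each case I would, as in Theorem \ref{thmb1}, fix a boundary point $x_0$, work in normal coordinates adapted to (\ref{b1}), insert the symbols from Lemma \ref{lem3} (for ${D_t}^{-1}$) and Lemma \ref{clem2} (for $({D_t}^*{D_t}{D_t}^*)^{-1}$), and then mechanically run three routines: Cauchy-formula evaluation of $\pi^+_{\xi_n}$ on rational symbols via (\ref{b3}); Clifford trace reduction using the identities of (\ref{32}) together with the $A$-trace cancellations tabulated in cases (a)-(c) after (\ref{a25}) and after (\ref{d6}); and $\xi_n$-integration by residues at $\xi_n=i$. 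The odd-monomial identity $\int_{|\xi'|=1}\xi_{i_1}\cdots\xi_{i_{2d+1}}\sigma(\xi')=0$ annihilates all $c(\xi')$-linear contributions from the connection pieces, and Lemmas \ref{le:32} and \ref{lem1} convert the surviving connection/curvature data into explicit $h'(0)$ contributions.

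The main obstacle is case (E), where $\partial_{\xi_n}\sigma_{-4}(({D_t}^*{D_t}{D_t}^*)^{-1})$ carries the sub-principal symbol $\sigma_2({D_t}^*{D_t}{D_t}^*)$ of Lemma \ref{clem1} in full. This symbol decomposes into the curvature/connection piece (which, after restriction to $x_0$, mimics the calculation of Theorem \ref{thmb1} and contributes to the real part $\tfrac{65}{8}\pi h'(0)$), the cross term $2[|\xi|^2\overline{t}A-c(\xi)tAc(\xi)]$, and the quadratic $|\xi|^2(\overline{t}A)^2$. The essential novelty is that ${D_t}^{-1}$ carries weight $t$ while $({D_t}^*{D_t}{D_t}^*)^{-1}$ carries the asymmetric weights $(t,\overline{t},\overline{t})$, so complex parameters genuinely survive in the trace. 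Tracking these $A$-dependent contributions through the Cauchy projection, taking traces against the remaining $c(dx_n)$ factor, and evaluating the $\xi_n$ residues is the delicate combinatorial step; it is where the imaginary coefficient $-\tfrac{41}{8}i\,\pi h'(0)$ enters. I expect case (D) to contribute further real-valued $h'(0)\Omega_4$ terms by the same scheme as in (\ref{42})--(\ref{60}), while cases (A)--(C) will combine in a partial cancellation analogous to $\Phi_1+\Phi_2+\Phi_3$ of Theorem \ref{thmb1}. Summing the five cases and adjoining (\ref{c3}) then yields (\ref{c11}). The second identity (\ref{a1.4}) for $\pi^+ D_t^{-1}\circ\pi^+ D_t^{-3}$ follows by the same argument with $\overline{t}$ replaced by $t$ throughout, which collapses the $(\overline{t}-t)^2$ bulk term and the $A$-driven imaginary part associated with the asymmetric pairing, leaving only the universal boundary piece.
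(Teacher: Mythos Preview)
Your overall strategy matches the paper's: split via (\ref{c1})--(\ref{c2}), read off the interior term from Theorem~\ref{thm2}, and enumerate the five admissible tuples for the boundary defect. The case list and the computational toolkit (Cauchy projection, Clifford trace reduction, residues at $\xi_n=i$, odd-monomial vanishing) are exactly what the paper uses.

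However, there is a genuine misattribution in your analysis of case~(E) (the paper's case~(b), $r=-1,\ell=-4$). You claim that the imaginary coefficient $-\tfrac{41}{8}i\,\pi h'(0)$ is driven by the $A$-dependent pieces $2[|\xi|^2\overline{t}A-c(\xi)tAc(\xi)]$ and $|\xi|^2(\overline{t}A)^2$ in $\sigma_2({D_t}^*{D_t}{D_t}^*)$. In fact all $A$-dependent boundary contributions vanish: the trace identities ${\rm tr}[A\,c(dx_n)]=0$ and ${\rm tr}[A\,c(\xi')]=0$ (see (\ref{c26}), which are the 6-dimensional analogues of (\ref{49})) kill every $A$ term in (\ref{c25}) after the Clifford reduction. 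The imaginary part arises entirely from the connection/metric pieces of $\sigma_2$ together with the $\partial_{x_n}c(\xi')$ term---specifically the first two lines of (\ref{c25}), which produce the two summands in (\ref{c28}). Your parenthetical that the connection piece ``contributes to the real part $\tfrac{65}{8}\pi h'(0)$'' is therefore also off: in case~(E) alone the connection contribution is $(-\tfrac{195}{8}-\tfrac{41}{8}i)\pi h'(0)\Omega_4$, and the real $\tfrac{65}{8}$ only emerges after summing all five cases.

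Your closing remark about (\ref{a1.4}) exposes the error: you say replacing $\overline t$ by $t$ ``collapses\dots the $A$-driven imaginary part\dots leaving only the universal boundary piece.'' But the boundary term in (\ref{a1.4}) is \emph{identical} to that in (\ref{a1.3}), including $-\tfrac{41}{8}i$. If the imaginary part were $A$-driven it would disappear in the $t=\overline t$ limit, which it does not. The correct statement is that the boundary defect is completely independent of $A$ (hence of $t$ and $\overline t$), while only the interior picks up the $(\overline t-t)^2$ term.
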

\begin{proof}
When $n=6$, then ${\rm tr}_{\wedge ^*T^*M}[\texttt{id}]=64$.
Since the sum is taken over $r+\ell-k-j-|\alpha|-1=-6, \ r\leq-1, \ell\leq -3$, then we have the $\int_{\partial_{M}}\Psi$
is the sum of the following five cases:\\
\noindent  {\bf case (a)~(I)}~$r=-1, l=-3, j=k=0, |\alpha|=1$.\\
By (\ref{c2}), we get
 \begin{equation}\label{c12}
\Psi_1=-\int_{|\xi'|=1}\int^{+\infty}_{-\infty}\sum_{|\alpha|=1}{\rm trace}
\Big[\partial^{\alpha}_{\xi'}\pi^{+}_{\xi_{n}}\sigma_{-1}({D_t}^{-1})
      \times\partial^{\alpha}_{x'}\partial_{\xi_{n}}\sigma_{-3}(({D_t}^{*}{D_t}
      {D_t}^{*})^{-1})\Big](x_0)d\xi_n\sigma(\xi')dx'.
\end{equation}
By Lemma \ref{clem2}, for $i<n$, we have
 \begin{equation}\label{c13}
 \partial_{x_{i}}\sigma_{-3}(({D_t}^{*}{D_t}{D_t}^{*})^{-1})(x_0)=
      \partial_{x_{i}}\Big[\frac{ic(\xi)}{|\xi|^{4}}\Big](x_{0})
      =i\partial_{x_{i}}[c(\xi)]|\xi|^{-4}(x_{0})-2ic(\xi)\partial_{x_{i}}[|\xi|^{2}]|\xi|^{-6}(x_{0})=0,
\end{equation}
 so $\Psi_1=0$.
~\\
\noindent  {\bf case (a)~(II)}~$r=-1, l=-3, |\alpha|=k=0, j=1$.\\
By (\ref{c2}), we have
  \begin{equation}\label{c14}
\Psi_2=-\frac{1}{2}\int_{|\xi'|=1}\int^{+\infty}_{-\infty} {\rm
trace} \Big[\partial_{x_{n}}\pi^{+}_{\xi_{n}}\sigma_{-1}({D_t}^{-1})
      \times\partial^{2}_{\xi_{n}}\sigma_{-3}(({D_t}^{*}{D_t}{D_t}^{*})^{-1})\Big](x_0)d\xi_n\sigma(\xi')dx'.
\end{equation}
By computations, we have\\
\begin{equation}\label{c15}
\partial^{2}_{\xi_{n}}\sigma_{-3}(({D_t}^{*}{D_t}{D_t}^{*})^{-1})=i\bigg[\frac{(20\xi^{2}_{n}-4)c(\xi')+
12(\xi^{3}_{n}-\xi_{n})c(dx_{n})}{(1+\xi_{n}^{2})^{4}}\bigg].
\end{equation}
Since $n=6$, ${\rm tr}[-\texttt{id}]=-64$. By the relation of the Clifford action and ${\rm tr}ab={\rm tr}ba$,  then
\begin{eqnarray}\label{c16}
&&{\rm tr}[c(\xi')c(dx_{n})]=0; \ {\rm tr}[c(dx_{n})^{2}]=-64;\
{\rm tr}[c(\xi')^{2}](x_{0})|_{|\xi'|=1}=-64;\nonumber\\
&&{\rm tr}[\partial_{x_{n}}[c(\xi')]c(\texttt{d}x_{n})]=0; \
{\rm tr}[\partial_{x_{n}}c(\xi')c(\xi')](x_{0})|_{|\xi'|=1}=-32h'(0).
\end{eqnarray}
By (\ref{30}), (\ref{c15}) and (\ref{c16}), we get
\begin{equation}\label{c17}
{\rm
trace} \Big[\partial_{x_{n}}\pi^{+}_{\xi_{n}}\sigma_{-1}({D_t}^{-1})
      \times\partial^{2}_{\xi_{n}}\sigma_{-3}(({D_t}^{*}{D_t}{D_t}^{*})^{-1})\Big](x_0)
=64 h'(0)\frac{-1-3\xi_{n}i+5\xi^{2}_{n}+3i\xi^{3}_{n}}{(\xi_{n}-i)^{6}(\xi_{n}+i)^{4}}.
\end{equation}
Then we obtain
\begin{align}\label{c18}
\Psi_2&=-\frac{1}{2}\int_{|\xi'|=1}\int^{+\infty}_{-\infty} h'(0)dimF\frac{-8-24\xi_{n}i+40\xi^{2}_{n}+24i\xi^{3}_{n}}{(\xi_{n}-i)^{6}(\xi_{n}+i)^{4}}d\xi_n\sigma(\xi')dx'\nonumber\\
     &=8h'(0)\Omega_{4}\int_{\Gamma^{+}}\frac{4+12\xi_{n}i-20\xi^{2}_{n}-122i\xi^{3}_{n}}{(\xi_{n}-i)^{6}(\xi_{n}+i)^{4}}d\xi_{n}dx'\nonumber\\
     &=h'(0)\Omega_{4}\frac{\pi i}{5!}\Big[\frac{8+24\xi_{n}i-40\xi^{2}_{n}-24i\xi^{3}_{n}}{(\xi_{n}+i)^{4}}\Big]^{(5)}|_{\xi_{n}=i}dx'\nonumber\\
     &=-\frac{15}{2}\pi h'(0)\Omega_{4}dx',
\end{align}
where ${\rm \Omega_{4}}$ is the canonical volume of $S^{4}.$\\
\noindent  {\bf case (a)~(III)}~$r=-1,l=-3,|\alpha|=j=0,k=1$.\\
By (\ref{c2}), we have
 \begin{equation}\label{c19}
\Psi_3=-\frac{1}{2}\int_{|\xi'|=1}\int^{+\infty}_{-\infty}{\rm trace} \Big[\partial_{\xi_{n}}\pi^{+}_{\xi_{n}}\sigma_{-1}({D_t}^{-1})
      \times\partial_{\xi_{n}}\partial_{x_{n}}\sigma_{-3}(({D_t}^{*}{D_t}{D_t}^{*})^{-1})\Big](x_0)d\xi_n\sigma(\xi')dx'.
\end{equation}
By computations, we have\\
\begin{equation}\label{c20}
\partial_{\xi_{n}}\partial_{x_{n}}\sigma_{-3}(({D_t}^{*}{D_t}{D_t}^{*})^{-1})=-\frac{4 i\xi_{n}\partial_{x_{n}}c(\xi')(x_{0})}{(1+\xi_{n}^{2})^{3}}
      +i\frac{12h'(0)\xi_{n}c(\xi')}{(1+\xi_{n}^{2})^{4}}
      -i\frac{(2-10\xi^{2}_{n})h'(0)c(dx_{n})}{(1+\xi_{n}^{2})^{4}}.
\end{equation}
Combining (\ref{35}) and (\ref{c20}), we have
\begin{equation}\label{c21}
{\rm trace} \Big[\partial_{\xi_{n}}\pi^{+}_{\xi_{n}}\sigma_{-1}({D_t}^{-1})
      \times\partial_{\xi_{n}}\partial_{x_{n}}\sigma_{-3}(({D_t}^{*}{D_t}{D_t}^{*})^{-1})\Big](x_{0})|_{|\xi'|=1}
=8h'(0)\frac{8i-32\xi_{n}-8i\xi^{2}_{n}}{(\xi_{n}-i)^{5}(\xi+i)^{4}}.
\end{equation}
Then
\begin{align}\label{c22}
\Psi_3&=-\frac{1}{2}\int_{|\xi'|=1}\int^{+\infty}_{-\infty} 8h'(0)\frac{8i-32\xi_{n}-8i\xi^{2}_{n}}{(\xi_{n}-i)^{5}(\xi+i)^{4}}d\xi_n\sigma(\xi')dx'\nonumber\\
     &=-\frac{1}{2}h'(0)8\Omega_{4}\int_{\Gamma^{+}}\frac{8i-32\xi_{n}-8i\xi^{2}_{n}}{(\xi_{n}-i)^{5}(\xi+i)^{4}}d\xi_{n}dx'\nonumber\\
     &=-8h'(0)\Omega_{4}\frac{\pi i}{4!}\Big[\frac{8i-32\xi_{n}-8i\xi^{2}_{n}}{(\xi+i)^{4}}\Big]^{(4)}|_{\xi_{n}=i}dx'\nonumber\\
     &=\frac{25}{2}\pi h'(0)\Omega_{4}dx'.
\end{align}
\noindent  {\bf case (b)}~$r=-1,l=-4,|\alpha|=j=k=0$.\\
By (\ref{c2}), we have
 \begin{align}\label{c23}
\Psi_4&=-i\int_{|\xi'|=1}\int^{+\infty}_{-\infty}{\rm trace} \Big[\pi^{+}_{\xi_{n}}\sigma_{-1}({D_t}^{-1})
      \times\partial_{\xi_{n}}\sigma_{-4}(({D_t}^{*}{D_t}
      {D_t}^{*})^{-1})\Big](x_0)d\xi_n\sigma(\xi')dx'\nonumber\\
&=i\int_{|\xi'|=1}\int^{+\infty}_{-\infty}{\rm trace} [\partial_{\xi_n}\pi^+_{\xi_n}\sigma_{-1}({D_t}^{-1})\times
\sigma_{-4}(({D_t}^{*}{D_t}
      {D_t}^{*})^{-1})](x_0)d\xi_n\sigma(\xi')dx'.
\end{align}
In the normal coordinate, $g^{ij}(x_{0})=\delta^{j}_{i}$ and $\partial_{x_{j}}(g^{\alpha\beta})(x_{0})=0$, if $j<n$; $\partial_{x_{j}}(g^{\alpha\beta})(x_{0})=h'(0)\delta^{\alpha}_{\beta}$, if $j=n$.
So by  \cite{Wa3}, when $k<n$, we have $\Gamma^{n}(x_{0})=\frac{5}{2}h'(0)$, $\Gamma^{k}(x_{0})=0$, $\delta^{n}(x_{0})=0$ and $\delta^{k}=\frac{1}{4}h'(0)c(e_{k})c(e_{n})$. Then, we obtain

\begin{align}\label{c24}
\sigma_{-4}(({D_t}^{*}{D_t}{D_t}^{*})^{-1})(x_{0})|_{|\xi'|=1}&=
\frac{c(\xi)\sigma_{2}({D_t}^{*}{D_t}{D_t}^{*})
(x_{0})|_{|\xi'|=1}c(\xi)}{|\xi|^8}
-\frac{c(\xi)}{|\xi|^4}\sum_j\partial_{\xi_j}\big(c(\xi)|\xi|^2\big)
D_{x_j}\big(\frac{ic(\xi)}{|\xi|^4}\big)\nonumber\\
&=\frac{1}{|\xi|^8}c(\xi)\Big(\frac{1}{2}h'(0)c(\xi)\sum_{k<n}\xi_k
c(e_k)c(e_n)-\frac{1}{2}h'(0)c(\xi)\sum_{k<n}\xi_k
\widehat{c}(e_k)\widehat{c}(e_n)\nonumber\\
&-\frac{5}{2}h'(0)\xi_nc(\xi)-\frac{1}{4}h'(0)|\xi|^2c(dx_n)
+2[|\xi|^2\overline{t}A-c(\xi)tAc(\xi)]+|\xi|^2\overline{t}A\Big)c(\xi)\nonumber\\
&+\frac{ic(\xi)}{|\xi|^8}\Big(|\xi|^4c(dx_n)\partial_{x_n}c(\xi')
-2h'(0)c(dx_n)c(\xi)+2\xi_{n}c(\xi)\partial_{x_n}c(\xi')+4\xi_{n}h'(0)\Big).
\end{align}
By (\ref{36}) and (\ref{c24}), we have
\begin{align}\label{c25}
&{\rm tr} [\partial_{\xi_n}\pi^+_{\xi_n}\sigma_{-1}({D_t}^{-1})\times
\sigma_{-4}({D_t}^{*}{D_t}{D_t}^{*})^{-1}](x_0)|_{|\xi'|=1} \nonumber\\
&=\frac{1}{2(\xi_{n}-i)^{2}(1+\xi_{n}^{2})^{4}}\big(\frac{3}{4}i+2+(3+4i)\xi_{n}+(-6+2i)\xi_{n}^{2}+3\xi_{n}^{3}+\frac{9i}{4}\xi_{n}^{4}\big)h'(0){\rm tr}
[id]\nonumber\\
&+\frac{1}{2(\xi_{n}-i)^{2}(1+\xi_{n}^{2})^{4}}\big(-1-3i\xi_{n}-2\xi_{n}^{2}-4i\xi_{n}^{3}-\xi_{n}^{4}-i\xi_{n}^{5}\big){\rm tr[c(\xi')\partial_{x_n}c(\xi')]}\nonumber\\
&-\frac{1}{2(\xi_{n}-i)^{2}(1+\xi_{n}^{2})^{4}}\big(\frac{1}{2}i+\frac{1}{2}\xi_{n}+\frac{1}{2}\xi_{n}^{2}+\frac{1}{2}\xi_{n}^{3}\big){\rm tr}
[c(\xi')\widehat{c}(\xi')c(dx_n)\widehat{c}(dx_n)]\nonumber\\
&+\frac{-\xi_ni+3}{2(\xi_{n}-i)^{4}(i+\xi_{n})^{3}}{\rm tr}\big[tAc(dx_n)\big]-\frac{3\xi_n+i}{2(\xi_{n}-i)^{4}(i+\xi_{n})^{3}}{\rm tr}\big[tAc(\xi')\big].
\end{align}
By the relation of the Clifford action and ${\rm tr}{ab}={\rm tr }{ba}$, we have equalities:
\begin{align}\label{c26}
&{\rm tr }[tA(x_0)c(dx_n)]=0;~~{\rm tr }[tA(x_0)c(\xi')]=0;\nonumber\\
&{\rm tr}[c(e_i)
\widehat{c}(e_i)c(e_n)
\widehat{c}(e_n)]=0~~(i<n).
\end{align}
Then
\begin{align}\label{c27}
{\rm tr}
[c(\xi')\widehat{c}(\xi')c(dx_n)\widehat{c}(dx_n)]=
\sum_{i,j<n}{\rm tr}[\xi_{i}\xi_{j}c(e_i)\widehat{c}
(e_j)c(dx_n)\widehat{c}(dx_n)]=0.
\end{align}
So, we have
\begin{align}\label{c28}
\Psi_4&=
 ih'(0)\int_{|\xi'|=1}\int^{+\infty}_{-\infty}64\times\frac{\frac{3}{4}i+2+(3+4i)\xi_{n}+(-6+2i)\xi_{n}^{2}+3\xi_{n}^{3}+\frac{9i}{4}\xi_{n}^{4}}{2(\xi_n-i)^5(\xi_n+i)^4}d\xi_n\sigma(\xi')dx'\nonumber\\ &+ih'(0)\int_{|\xi'|=1}\int^{+\infty}_{-\infty}32\times\frac{1+3i\xi_{n}
 +2\xi_{n}^{2}+4i\xi_{n}^{3}+\xi_{n}^{4}+i\xi_{n}^{5}}{2(\xi_{n}-i)^{2}
 (1+\xi_{n}^{2})^{4}}d\xi_n\sigma(\xi')dx'\nonumber\\
 &+i\int_{|\xi'|=1}\int^{+\infty}_{-\infty}
 \frac{\xi_n-i-2\xi_n i +1}{2(\xi_{n}-i)^{4}(i+\xi_{n})^{3}}{\rm tr}\big[tAc(dx_n)\big]d\xi_n\sigma(\xi')dx'\nonumber\\
 &-i\int_{|\xi'|=1}\int^{+\infty}_{-\infty}
 \frac{3\xi_n+i}{2(\xi_{n}-i)^{4}(i+\xi_{n})^{3}}{\rm tr}\big[tAc(\xi')\big]d\xi_n\sigma(\xi')dx'\nonumber\\
&=(-\frac{19}{4}i-15)\pi h'(0)\Omega_4dx'+(-\frac{3}{8}i-\frac{75}{8})\pi h'(0)\Omega_4dx'\nonumber\\
&=(-\frac{41}{8}i-\frac{195}{8})\pi h'(0)\Omega_4dx'.
\end{align}
\noindent {\bf  case (c)}~$r=-2,l=-3,|\alpha|=j=k=0$.\\
By (\ref{c2}), we have
\begin{equation}\label{c29}
\Psi_5=-i\int_{|\xi'|=1}\int^{+\infty}_{-\infty}{\rm trace} \Big[\pi^{+}_{\xi_{n}}\sigma_{-2}({D_t}^{-1})
      \times\partial_{\xi_{n}}\sigma_{-3}(({D_t}^{*}{D_t}{D_t}^{*})^{-1})\Big](x_0)d\xi_n\sigma(\xi')dx'.
\end{equation}
By Lemma \ref{clem1} and Lemma \ref{clem2}, we have
\begin{align}\label{c30}
\sigma_{-2}({D_t}^{-1})(x_0)&=\frac{c(\xi)\sigma_{0}({D_t})c(\xi)}{|\xi|^4}(x_0)+\frac{c(\xi)}{|\xi|^6}\sum_jc(dx_j)
\Big[\partial_{x_j}(c(\xi))|\xi|^2-c(\xi)\partial_{x_j}(|\xi|^2)\Big](x_0),
\end{align}
where
\begin{align}\label{c31}
\sigma_0({D_t})=\frac{1}{4}\sum_{i,s,t}\omega_{s,t}(e_i)c(e_i)\widehat{c}(e_s)\widehat{c}(e_t)
-\frac{1}{4}\sum_{i,s,t}\omega_{s,t}(e_i)c(e_i)c(e_s)c(e_t)+tA.
\end{align}
On the other hand,
\begin{align}\label{c32}
\partial_{\xi_{n}}\sigma_{-3}(({D_t}^{*}{D_t}{D_t}^{*})^{-1})=\frac{-4 i \xi_{n}c(\xi')}{(1+\xi_{n}^{2})^{3}}+\frac{i(1- 3\xi_{n}^{2})c(\texttt{d}x_{n})}
{(1+\xi_{n}^{2})^{3}}.
\end{align}
By (\ref{c30}), (\ref{b3}) and (\ref{b4}), we have
\begin{align}\label{c33}
\pi^{+}_{\xi_{n}}\Big(\sigma_{-2}({D_t}^{-1})\Big)(x_{_{0}})|_{|\xi'|=1}
&=\pi^{+}_{\xi_{n}}\Big[\frac{c(\xi)\sigma_{0}({D_t})(x_{0})c(\xi)
+c(\xi)c(dx_{n})\partial_{x_{n}}[c(\xi')](x_{0})}{(1+\xi^{2}_{n})^{2}}\Big]\nonumber\\
&-h'(0)\pi^{+}_{\xi_{n}}\Big[\frac{c(\xi)c(dx_{n})c(\xi)}{(1+\xi^{2}_{n})^{3}}\Big].
\end{align}
We denote
 \begin{align}\label{c34}
\sigma_{0}({D_t})(x_0)|_{\xi_n=i}=Q_0(x_0)=Q_0^{1}(x_0)+Q_0^{2}(x_0)
+tA.
\end{align}
Then, we obtain
\begin{align}\label{c35}
\pi^{+}_{\xi_{n}}\Big(\sigma_{-2}({D_t}^{-1})\Big)(x_{_{0}})|_{|\xi'|=1}
&=\pi^+_{\xi_n}\Big[\frac{c(\xi)Q_0^{2}(x_0)c(\xi)+c(\xi)c(dx_n)
\partial_{x_n}[c(\xi')](x_0)}{(1+\xi_n^2)^2}-h'(0)\frac{c(\xi)c(dx_n)c(\xi)}{(1+\xi_n^{2})^3}\Big]\nonumber\\
&+\pi^+_{\xi_n}\Big[\frac{c(\xi)[Q_0^{1}(x_0)]c(\xi)(x_0)}{(1+\xi_n^2)^2}\Big]
+\pi^+_{\xi_n}\Big[\frac{c(\xi)tAc(\xi)(x_0)}{(1+\xi_n^2)^2}\Big].\nonumber\\
\end{align}
Furthermore,
\begin{align}\label{c36}
\pi^+_{\xi_n}\Big[\frac{c(\xi)tA(x_0)c(\xi)}
{(1+\xi_n^2)^2}\Big]&=\pi^+_{\xi_n}\Big[\frac{c(\xi')tA(x_0)c(\xi')}
{(1+\xi_n^2)^2}\Big]
+\pi^+_{\xi_n}\Big[ \frac{\xi_nc(\xi')tA
(x_0)c(dx_{n})}{(1+\xi_n^2)^2}\Big]\nonumber\\
&+\pi^+_{\xi_n}\Big[\frac{\xi_nc(dx_{n})tA(x_0)c(\xi')}{(1+\xi_n^2)^2}\Big]
+\pi^+_{\xi_n}\Big[\frac{\xi_n^{2}c(dx_{n})tA(x_0)c(dx_{n})}{(1+\xi_n^2)^2}\Big]\nonumber\\
&=-\frac{c(\xi')tA(x_0)c(\xi')(2+i\xi_{n})}{4(\xi_{n}-i)^{2}}
+\frac{ic(\xi')tA(x_0)c(dx_{n})}{4(\xi_{n}-i)^{2}}+\frac{ic(dx_{n})tA(x_0)c(\xi')}{4(\xi_{n}-i)^{2}}\nonumber\\
&+\frac{-i\xi_{n}c(dx_{n})tA(x_0)c(dx_{n})}{4(\xi_{n}-i)^{2}},\nonumber\\
\end{align}
\begin{align}\label{c37}
\pi^+_{\xi_n}\Big[\frac{c(\xi)Q_0^{1}(x_0)c(\xi)}{(1+\xi_n^2)^2}\Big]&=\pi^+_{\xi_n}\Big[\frac{c(\xi')Q_0^{1}(x_0)c(\xi')}{(1+\xi_n^2)^2}\Big]
+\pi^+_{\xi_n}\Big[ \frac{\xi_nc(\xi')Q_0^{1}(x_0)c(dx_{n})}{(1+\xi_n^2)^2}\Big]\nonumber\\
&+\pi^+_{\xi_n}\Big[\frac{\xi_nc(dx_{n})Q_0^{1}(x_0)c(\xi')}{(1+\xi_n^2)^2}\Big]
+\pi^+_{\xi_n}\Big[\frac{\xi_n^{2}c(dx_{n})Q_0^{1}(x_0)c(dx_{n})}{(1+\xi_n^2)^2}\Big]\nonumber\\
&=-\frac{c(\xi')Q_0^{1}(x_0)c(\xi')(2+i\xi_{n})}{4(\xi_{n}-i)^{2}}
+\frac{ic(\xi')Q_0^{1}(x_0)c(dx_{n})}{4(\xi_{n}-i)^{2}}\nonumber\\
&+\frac{ic(dx_{n})Q_0^{1}(x_0)c(\xi')}{4(\xi_{n}-i)^{2}}
+\frac{-i\xi_{n}c(dx_{n})Q_0^{1}(x_0)c(dx_{n})}{4(\xi_{n}-i)^{2}}.
\end{align}
By the relation of the Clifford action and ${\rm tr}{ab}={\rm tr }{ba}$, we have equalities:
\begin{eqnarray}\label{c38}
{\rm tr}[Q_0^{1}c(dx_n)]=0;~~{\rm tr}[\widehat{c}(\xi')\widehat{c}(dx_n)]=0.
\end{eqnarray}
Then we have
\begin{eqnarray}\label{c39}
{\rm tr }\bigg[\pi^+_{\xi_n}\Big(\frac{c(\xi)Q_0^{1}(x_0)c(\xi)}{(1+\xi_n^2)^2}\Big)\times
\partial_{\xi_n}\sigma_{-3}(({D_t}^{*}{D_t}{D_t}^{*})^{-1})(x_0)\bigg]\bigg|_{|\xi'|=1}=\frac{2-8i\xi_n-6\xi_n^2}{4(\xi_n-i)^{2}(1+\xi_n^2)^{3}}{\rm tr }[Q_0^{1}(x_0)c(\xi')],
\end{eqnarray}
By computations, we have
\begin{align}\label{c40}
\pi^+_{\xi_n}\Big[\frac{c(\xi)Q_0^{2}(x_0)c(\xi)+c(\xi)c(dx_n)\partial_{x_n}(c(\xi'))(x_0)}{(1+\xi_n^2)^2}\Big]-h'(0)\pi^+_{\xi_n}\Big[\frac{c(\xi)c(dx_n)c(\xi)}{(1+\xi_n)^3}\Big]:= C_1-C_2,\nonumber\\
\end{align}
where
\begin{align}\label{c41}
C_1&=\frac{-1}{4(\xi_n-i)^2}\big[(2+i\xi_n)c(\xi')Q^2_0c(\xi')+i\xi_nc(dx_n)Q^2_0c(dx_n) \nonumber\\
&+(2+i\xi_n)c(\xi')c(dx_n)\partial_{x_n}c(\xi')+ic(dx_n)Q^2_0c(\xi')
+ic(\xi')Q^2_0c(dx_n)-i\partial_{x_n}c(\xi')\big]\nonumber\\
&=\frac{1}{4(\xi_n-i)^2}\Big[\frac{5}{2}h'(0)c(dx_n)-\frac{5i}{2}h'(0)c(\xi')
  -(2+i\xi_n)c(\xi')c(dx_n)\partial_{\xi_n}c(\xi')+i\partial_{\xi_n}c(\xi')\Big];\nonumber\\
\end{align}
\begin{align}\label{c401}
C_2&=\frac{h'(0)}{2}\Big[\frac{c(dx_n)}{4i(\xi_n-i)}+\frac{c(dx_n)-ic(\xi')}{8(\xi_n-i)^2}+\frac{3\xi_n-7i}{8(\xi_n-i)^3}\big(ic(\xi')-c(dx_n)\big)\Big].\nonumber\\
\end{align}
By (\ref{c32}) and (\ref{c401}), we have
\begin{align}\label{c43}
&{\rm tr }[C_2\times\partial_{\xi_n}\sigma_{-3}(({D_t}^{*}{D_t}{D_t}^{*})^{-1})(x_0)]|_{|\xi'|=1}\nonumber\\
&={\rm tr }\Big\{ \frac{h'(0)}{2}\Big[\frac{c(dx_n)}{4i(\xi_n-i)}+\frac{c(dx_n)-ic(\xi')}{8(\xi_n-i)^2}
+\frac{3\xi_n-7i}{8(\xi_n-i)^3}[ic(\xi')-c(dx_n)]\Big] \nonumber\\
&\times\frac{-4i\xi_nc(\xi')+(i-3i\xi_n^{2})c(dx_n)}{(1+\xi_n^{2})^3}\Big\} \nonumber\\
&=8h'(0)\frac{4i-11\xi_n-6i\xi_n^{2}+3\xi_n^{3}}{(\xi_n-i)^5(\xi_n+i)^3}.
\end{align}
Similarly, we have
\begin{align}\label{c44}
&{\rm tr }[C_1\times\partial_{\xi_n}\sigma_{-3}(({D_t}^{*}{D_t}{D_t}^{*})^{-1})(x_0)]|_{|\xi'|=1}\nonumber\\
&={\rm tr }\Big\{ \frac{1}{4(\xi_n-i)^2}\Big[\frac{5}{2}h'(0)c(dx_n)-\frac{5i}{2}h'(0)c(\xi')
  -(2+i\xi_n)c(\xi')c(dx_n)\partial_{\xi_n}c(\xi')+i\partial_{\xi_n}c(\xi')\Big]\nonumber\\
&\times \frac{-4i\xi_nc(\xi')+(i-3i\xi_n^{2})c(dx_n)}{(1+\xi_n^{2})^3}\Big\} \nonumber\\
&=8h'(0)\frac{3+12i\xi_n+3\xi_n^{2}}{(\xi_n-i)^4(\xi_n+i)^3};\\
&{\rm tr }\bigg[\pi^+_{\xi_n}\Big(\frac{c(\xi)tA
(x_0)c(\xi)}{(1+\xi_n^2)^2}\Big)\times
\partial_{\xi_n}\sigma_{-3}(({D_t}^{*}{D_t}{D_t}^{*})^{-1})
(x_0)\bigg]\bigg|_{|\xi'|=1}\nonumber\\
&=\frac{2-8i\xi_n-6\xi_n^2}{4(\xi_n-i)^{2}(1+\xi_n^2)^{3}}{\rm tr }[tA(x_0)c(\xi')]\nonumber\\
&=\frac{2-8i\xi_n-6\xi_n^2}{4(\xi_n-i)^{2}(1+\xi_n^2)^{3}}{\rm tr }[tA(x_0)c(\xi')]\nonumber\\
&=0.
\end{align}
By $\int_{|\xi'|=1}\{\xi_{i_{1}}\xi_{i_{2}}\cdots\xi_{i_{2d+1}}\}\sigma(\xi')=0,$ we have\\
\begin{align}\label{c45}
\Psi_5&=
 -i h'(0)\int_{|\xi'|=1}\int^{+\infty}_{-\infty}
 8\times\frac{-7i+26\xi_n+15i\xi_n^{2}}{(\xi_n-i)^5(\xi_n+i)^3}d\xi_n\sigma(\xi')dx' \nonumber\\
&=-8i h'(0)\times\frac{2 \pi i}{4!}\Big[\frac{-7i+26\xi_n+15i\xi_n^{2}}{(\xi_n+i)^3}
     \Big]^{(5)}|_{\xi_n=i}\Omega_4dx'\nonumber\\
&=\frac{55}{2}\pi h'(0)\Omega_4dx'.
\end{align}
Now $\Psi$ is the sum of the cases (a), (b) and (c), then
\begin{equation}\label{c46}
\Psi=(\frac{65}{8}-\frac{41}{8}i)\pi h'(0)\Omega_4dx'.
\end{equation}
By (\ref{c1})-(\ref{c3}), we obtain Theorem \ref{cthm1}.\\
\end{proof}
Next, we prove the Kastler-Kalau-Walze type theorem for $6$-dimensional manifold with boundary associated to ${D_t}^{3}$. From \cite{Wa5}, we know that
\begin{equation}\label{c47}
\widetilde{{\rm Wres}}[\pi^+{D_t}^{-1}\circ\pi^+{D_t}^{-3}
      ]=\int_M\int_{|\xi|=1}{\rm
trace}_{\wedge^*T^*M\bigotimes\mathbb{C}}[\sigma_{-4}({D_t}^{-4})]\sigma(\xi)dx+\int_{\partial M}\overline{\Psi},
\end{equation}
where $\widetilde{{\rm Wres}}$ denote noncommutative residue on minifolds with boundary,
\begin{align}\label{c48}
\overline{\Psi} &=\int_{|\xi'|=1}\int^{+\infty}_{-\infty}\sum^{\infty}_{j, k=0}\sum\frac{(-i)^{|\alpha|+j+k+1}}{\alpha!(j+k+1)!}
\times {\rm trace}_{{\wedge^*T^*M\bigotimes\mathbb{C}}}[\partial^j_{x_n}\partial^\alpha_{\xi'}\partial^k_{\xi_n}\sigma^+_{r}({D_t}^{-1})(x',0,\xi',\xi_n)
\nonumber\\
&\times\partial^\alpha_{x'}\partial^{j+1}_{\xi_n}\partial^k_{x_n}\sigma_{l}
({D_t}^{-3})(x',0,\xi',\xi_n)]d\xi_n\sigma(\xi')dx',
\end{align}
and the sum is taken over $r+\ell-k-j-|\alpha|-1=-6, \ r\leq-1, \ell\leq -3$.

By Theorem \ref{thm2}, we compute the interior term of (\ref{c48}), then
\begin{align}\label{c49}
&\int_M\int_{|\xi|=1}{\rm
trace}_{\wedge^*T^*M\bigotimes\mathbb{C}}[\sigma_{-4}({D_t}^{-4})]\sigma(\xi)dx=128\pi^3\int_{M}\bigg(-\frac{16}{3}K
\bigg)d{\rm Vol_{M}}.\nonumber\\
\end{align}

So we only need to compute $\int_{\partial M} \overline{\Psi}$. Let us now turn to compute the specification of
${D_t}^3$.
\begin{align}\label{c50}
{D_t}^3
&=\sum^{n}_{i=1}c(e_{i})\langle e_{i},dx_{l}\rangle(-g^{ij}\partial_{l}\partial_{i}\partial_{j})
+\sum^{n}_{i=1}c(e_{i})\langle e_{i},dx_{l}\rangle \bigg\{-(\partial_{l}g^{ij})\partial_{i}\partial_{j}-g^{ij}\bigg(4(\sigma_{i}
+a_{i})\partial_{j}-2\Gamma^{k}_{ij}\partial_{k}\bigg)\partial_{l}\bigg\} \nonumber\\
&+\sum^{n}_{i=1}c(e_{i})\langle e_{i},dx_{l}\rangle \bigg\{-2(\partial_{l}g^{ij})(\sigma_{i}+a_i)\partial_{j}+g^{ij}
(\partial_{l}\Gamma^{k}_{ij})\partial_{k}-2g^{ij}[(\partial_{l}\sigma_{i})
+(\partial_{l}a_i)\partial_{j}
+(\partial_{l}g^{ij})\Gamma^{k}_{ij}\partial_{k}\nonumber\\
&+\sum_{j,k}\Big[\partial_{l}\Big(t\sum_{i=1}^{n}\sum_{\alpha=1}^{k}c(e_i)\widehat{c}(S(e_i)f_\alpha)\widehat{c}(f_\alpha)c(e_{j})+c(e_{j})t\sum_{i=1}^{n}\sum_{\alpha=1}^{k}c(e_i)\widehat{c}(S(e_i)f_\alpha)\widehat{c}(f_\alpha)\Big)\Big]\langle e_{j},dx^{k}\rangle\partial_{k}\nonumber\\
&+\sum_{j,k}\Big(t\sum_{i=1}^{n}\sum_{\alpha=1}^{k}c(e_i)\widehat{c}(S(e_i)f_\alpha)\widehat{c}(f_\alpha)c(e_{j})+c(e_{j})t\sum_{i=1}^{n}\sum_{\alpha=1}^{k}c(e_i)\widehat{c}(S(e_i)f_\alpha)\widehat{c}(f_\alpha)\Big)\Big[\partial_{l}\langle e_{j},dx^{k}\rangle\Big]\partial_{k} \bigg\}\nonumber\\
&+\sum^{n}_{i=1}c(e_{i})\langle e_{i},dx_{l}\rangle\partial_{l}\bigg\{-g^{ij}\Big[(\partial_{i}\sigma_{j})+({\partial_t}_{i}a_{j})+\sigma_{i}\sigma_{j}+\sigma_{i}a_{j}+a_{i}\sigma_{j}+a_{i}a_{j}-\Gamma_{i,j}^{k}\sigma_{k}-\Gamma_{i,j}^{k}a_{k}\nonumber\\
&+\sum_{i,j}g^{i,j}\Big[t\sum_{i=1}^{n}\sum_{\alpha=1}^{k}c(e_i)\widehat{c}(S(e_i)f_\alpha)\widehat{c}(f_\alpha)c(\partial_{i})\sigma_{i}
+t\sum_{i=1}^{n}\sum_{\alpha=1}^{k}c(e_i)\widehat{c}(S(e_i)f_\alpha)\widehat{c}(f_\alpha)c(\partial_{i})a_{i}\nonumber\\
&+c(\partial_{i})\partial_{i}(t\sum_{i=1}^{n}\sum_{\alpha=1}^{k}c(e_i)\widehat{c}(S(e_i)f_\alpha)\widehat{c}(f_\alpha))+c(\partial_{i})\sigma_{i}t\sum_{i=1}^{n}\sum_{\alpha=1}^{k}c(e_i)\widehat{c}(S(e_i)f_\alpha)\widehat{c}(f_\alpha)\nonumber\\
&+c(\partial_{i})a_{i}t\sum_{i=1}^{n}\sum_{\alpha=1}^{k}c(e_i)\widehat{c}(S(e_i)f_\alpha)\widehat{c}(f_\alpha)\Big]+\frac{1}{4}K-\frac{1}{8}\sum_{ijkl}R_{ijkl}\widehat{c}(e_i)\widehat{c}(e_j)
c(e_k)c(e_l)\nonumber\\
&-[t\sum_{i=1}^{n}\sum_{\alpha=1}^{k}c(e_i)\widehat{c}(S(e_i)f_\alpha)\widehat{c}(f_\alpha)]^2\bigg\}+\Big[(\sigma_{i}+a_{i})+(t\sum_{i=1}^{n}\sum_{\alpha=1}^{k}c(e_i)\widehat{c}(S(e_i)f_\alpha)\widehat{c}(f_\alpha))\Big](-g^{ij}\partial_{i}\partial_{j})\nonumber\\
&+\sum^{n}_{i=1}c(e_{i})\langle e_{i},dx_{l}\rangle \bigg\{2\sum_{j,k}\Big[t\sum_{i=1}^{n}\sum_{\alpha=1}^{k}c(e_i)\widehat{c}(S(e_i)f_\alpha)\widehat{c}(f_\alpha)c(e_{j})+c(e_{j})t\sum_{i=1}^{n}\sum_{\alpha=1}^{k}c(e_i)\widehat{c}(S(e_i)f_\alpha)\widehat{c}(f_\alpha)\Big]\nonumber\\
&\times\langle e_{i},dx_{k}\rangle\bigg\}_{l}\partial_{k}
+\Big[(\sigma_{i}+a_{i})+(t\sum_{i=1}^{n}\sum_{\alpha=1}^{k}c(e_i)\widehat{c}(S(e_i)f_\alpha)\widehat{c}(f_\alpha))\Big]
\bigg\{-\sum_{i,j}g^{i,j}\Big[2\sigma_{i}\partial_{j}+2a_{i}\partial_{j}
-\Gamma_{i,j}^{k}\partial_{k}\nonumber\\
&+(\partial_{i}\sigma_{j})
+(\partial_{i}a_{j})+\sigma_{i}\sigma_{j}+\sigma_{i}a_{j}+a_{i}\sigma_{j}+a_{i}a_{j} -\Gamma_{i,j}^{k}\sigma_{k}-\Gamma_{i,j}^{k}a_{k}\Big]+\sum_{i,j}g^{i,j}\Big[c(\partial_{i})\nonumber\\
&t\sum_{i=1}^{n}\sum_{\alpha=1}^{k}c(e_i)\widehat{c}(S(e_i)f_\alpha)\widehat{c}(f_\alpha)
+t\sum_{i=1}^{n}\sum_{\alpha=1}^{k}c(e_i)\widehat{c}(S(e_i)f_\alpha)\widehat{c}(f_\alpha)c(\partial_{i})\Big]\partial_{j}\nonumber\\
&+\sum_{i,j}g^{i,j}\Big[t\sum_{i=1}^{n}\sum_{\alpha=1}^{k}c(e_i)\widehat{c}(S(e_i)f_\alpha)\widehat{c}(f_\alpha)c(\partial_{i})\sigma_{i}+t\sum_{i=1}^{n}\sum_{\alpha=1}^{k}c(e_i)\widehat{c}(S(e_i)f_\alpha)\widehat{c}(f_\alpha)c({\partial_t}_{i})a_{i}\nonumber\\
&+c(\partial_{i})\partial_{i}(t\sum_{i=1}^{n}\sum_{\alpha=1}^{k}c(e_i)\widehat{c}(S(e_i)f_\alpha)\widehat{c}(f_\alpha))+c(\partial_{i})\sigma_{i}t\sum_{i=1}^{n}\sum_{\alpha=1}^{k}c(e_i)\widehat{c}(S(e_i)f_\alpha)\widehat{c}(f_\alpha)\nonumber\\
&+c(\partial_{i})\partial_{i}t\sum_{i=1}^{n}\sum_{\alpha=1}^{k}c(e_i)\widehat{c}(S(e_i)f_\alpha)\widehat{c}(f_\alpha)+c(\partial_{i})\sigma_{i}t\sum_{i=1}^{n}\sum_{\alpha=1}^{k}c(e_i)\widehat{c}(S(e_i)f_\alpha)\widehat{c}(f_\alpha)\nonumber\\
&+c(\partial_{i})a_{i}\overline{t}\sum_{i=1}^{n}\sum_{\alpha=1}^{k}c(e_i)\widehat{c}(S(e_i)f_\alpha)\widehat{c}(f_\alpha)\Big]+\frac{1}{4}K-\frac{1}{8}\sum_{ijkl}R_{ijkl}\widehat{c}(e_i)\widehat{c}(e_j)
c(e_k)c(e_l)\nonumber\\
&-[t\sum_{i=1}^{n}\sum_{\alpha=1}^{k}c(e_i)\widehat{c}(S(e_i)f_\alpha)\widehat{c}(f_\alpha)]^2\bigg\}.\nonumber\\
\end{align}

Then, we obtain
\begin{lem}\label{clema1} The following identities hold:
\begin{align}\label{cc50}
\sigma_2({D_t}^3)&=\sum_{i,j,l}c(dx_{l})\partial_{l}(g^{i,j})\xi_{i}\xi_{j} +c(\xi)(4\sigma^k+4a^k-2\Gamma^k)\xi_{k}-2[c(\xi)tAc(\xi)-|\xi|^2tA]\nonumber\\
&+\frac{1}{4}|\xi|^2\sum_{s,t,l}\omega_{s,t}
(e_l)[c(e_l)\widehat{c}(e_s)\widehat{c}(e_t)-c(e_l)c(e_s)c(e_t)]\nonumber\\
&+|\xi|^2((e_l)[c(e_l)\widehat{c}(e_s)\widehat{c}(e_t));\nonumber\\
\sigma_{3}({D_t}^3)&=ic(\xi)|\xi|^{2}.
\end{align}
\end{lem}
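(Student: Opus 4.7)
The plan is to mirror exactly the derivation of Lemma \ref{clem1}, because $D_t^3$ differs from $D_t^*D_tD_t^*$ only in that every occurrence of the conjugate $\overline t$ in the latter's expansion is replaced by $t$ in the former. Indeed, $D_t = (d+\delta) + tA$ and $D_t^* = (d+\delta) + \overline{t}A$ share the same Dirac-type differential part, so the contributions to the top-order symbol and to the Levi-Civita / Christoffel pieces of the subleading symbol are identical in the two cases; only the coefficients in front of $A$ differ.

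For $\sigma_3(D_t^3)$, I would invoke the multiplicativity of principal symbols together with the Clifford relation $c(\xi)^2 = -|\xi|^2$:
\[
\sigma_3(D_t^3) = \bigl(\sigma_1(D_t)\bigr)^3 = (ic(\xi))^3 = i^3\,c(\xi)\cdot c(\xi)^2 = ic(\xi)|\xi|^2,
\]
which matches the claim.

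For $\sigma_2(D_t^3)$, I would read off the order-$2$ part in $\xi$ directly from the explicit local expansion (\ref{c50}). The second-order terms split into three groups: (i) the metric-derivative piece $\sum_{i,j,l} c(dx_l)\partial_l(g^{ij})\xi_i\xi_j$; (ii) the Clifford/Christoffel connection piece $c(\xi)\bigl(4\sigma^k+4a^k-2\Gamma^k\bigr)\xi_k$ together with the Lichnerowicz-type $\tfrac14|\xi|^2\sum_{s,t,l}\omega_{s,t}(e_l)[c(e_l)\widehat{c}(e_s)\widehat{c}(e_t)-c(e_l)c(e_s)c(e_t)]$; and (iii) the $A$-insertion terms, which assemble into $-2[c(\xi)tAc(\xi)-|\xi|^2 tA]$ plus a quadratic $|\xi|^2(tA)^2$ contribution. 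Groups (i) and (ii) are textually identical to their counterparts in Lemma \ref{clem1}, while group (iii) is obtained from its analogue there by the substitution $\overline{t}\mapsto t$.

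The principal technical obstacle is the bookkeeping of the $A$-insertions in group (iii). Because $D_t^3$ is not a sandwich of the form $P^*QP^*$, the cross-terms $\overline{t}\,t\,A^2$ and $\overline{t}\,c(\xi)tAc(\xi)$ that combined cleanly in Lemma \ref{clem1} no longer appear with conjugate coefficients; instead they recombine into a single $t^2 A^2$ plus the $tA$/$c(\xi)$ products arising from each of the three factors of $D_t$. To handle this, I would carefully separate the three $A$-insertions according to which factor of $D_t$ they come from and then use the anticommutation relation between $c(\xi)$ and $A$ (noting that $A$ is built from $c(e_i)$ and $\widehat c(\cdot)$, so $Ac(\xi)+c(\xi)A$ involves only the $\widehat c$ factors) to reduce the output to the stated normal form. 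This step also pins down the precise coefficient of $|\xi|^2$ versus $c(\xi)Ac(\xi)$, completing the identification with the expression for $\sigma_2(D_t^3)$ in Lemma \ref{clema1}.
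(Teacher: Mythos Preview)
Your proposal is correct and follows the paper's own approach: the paper simply writes out the full local expansion of $D_t^3$ in (\ref{c50}) and then extracts the order-$3$ and order-$2$ symbols, exactly parallel to how (\ref{c4}) yields Lemma~\ref{clem1}. Your observation that one may obtain Lemma~\ref{clema1} from Lemma~\ref{clem1} by the global substitution $\overline{t}\mapsto t$ is a legitimate shortcut that the paper does not state explicitly but is implicit in the parallel structure of (\ref{c4}) and (\ref{c50}); the extra discussion of anticommutators in your final paragraph is more bookkeeping than the paper carries out, but it is not wrong.
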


Write
\begin{eqnarray}\label{c51}
\sigma({D_t}^3)&=&p_3+p_2+p_1+p_0;
~\sigma({D_t}^{-3})=\sum^{\infty}_{j=3}q_{-j}.
\end{eqnarray}

By the composition formula of pseudodifferential operators, we have

\begin{align}\label{c52}
1=\sigma({D_t}^3\circ {D_t}^{-3})&=
\sum_{\alpha}\frac{1}{\alpha!}\partial^{\alpha}_{\xi}
[\sigma({D_t}^3)]D^{\alpha}_{x}
[\sigma({D_t}^{-3})] \nonumber\\
&=(p_3+p_2+p_1+p_0)(q_{-3}+q_{-4}+q_{-5}+\cdots) \nonumber\\
&+\sum_j(\partial_{\xi_j}p_3+\partial_{\xi_j}p_2+\partial_{\xi_j}p_1+\partial_{\xi_j}p_0)
(D_{x_j}q_{-3}+D_{x_j}q_{-4}+D_{x_j}q_{-5}+\cdots) \nonumber\\
&=p_3q_{-3}+(p_3q_{-4}+p_2q_{-3}+\sum_j\partial_{\xi_j}p_3D_{x_j}q_{-3})+\cdots,
\end{align}
by (\ref{c52}), we have
\begin{equation}\label{c53}
q_{-3}=p_3^{-1};~q_{-4}=-p_3^{-1}[p_2p_3^{-1}+\sum_j\partial_{\xi_j}p_3D_{x_j}(p_3^{-1})].
\end{equation}
By (\ref{c50})-(\ref{c53}), we have some symbols of operators.
\begin{lem}\label{clem5} The following identities hold:
\begin{align}\label{c54}
\sigma_{-3}({D_t}^{-3})&=\frac{ic(\xi)}{|\xi|^{4}};\nonumber\\
\sigma_{-4}({D_t}^{-3})&=
\frac{c(\xi)\sigma_{2}({D_t}^{3})c(\xi)}{|\xi|^8}
+\frac{ic(\xi)}{|\xi|^8}\Big(|\xi|^4c(dx_n)\partial_{x_n}c(\xi')
-2h'(0)c(dx_n)c(\xi)\nonumber\\
&+2\xi_{n}c(\xi)\partial_{x_n}c(\xi')+4\xi_{n}h'(0)\Big).
\end{align}
\end{lem}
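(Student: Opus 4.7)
The plan is to apply the standard composition formula for the symbols of a pseudodifferential operator and its parametrix, already recorded in (\ref{c51})--(\ref{c53}), using the explicit expressions for $p_{3}=\sigma_{3}({D_t}^{3})$ and $p_{2}=\sigma_{2}({D_t}^{3})$ obtained in Lemma \ref{clema1}. Since the manipulations mirror those carried out for the operator ${D_t}^{*}{D_t}{D_t}^{*}$ in Lemma \ref{clem2}, I would emphasise only the two points where the Clifford algebra enters.

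First I would compute $q_{-3}=p_{3}^{-1}$. From Lemma \ref{clema1} we have $p_{3}=ic(\xi)|\xi|^{2}$, and the Clifford relation $c(\xi)^{2}=-|\xi|^{2}$ gives $c(\xi)^{-1}=-c(\xi)/|\xi|^{2}$. Hence
\begin{equation*}
q_{-3}=\bigl(ic(\xi)|\xi|^{2}\bigr)^{-1}=\frac{1}{i|\xi|^{2}}\cdot\frac{-c(\xi)}{|\xi|^{2}}=\frac{ic(\xi)}{|\xi|^{4}},
\end{equation*}
which is the first identity of the lemma.

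Next I would compute $q_{-4}=-p_{3}^{-1}\bigl[p_{2}p_{3}^{-1}+\sum_{j}\partial_{\xi_{j}}p_{3}\,D_{x_{j}}(p_{3}^{-1})\bigr]$. The first piece, $-p_{3}^{-1}p_{2}p_{3}^{-1}$, produces the factor $c(\xi)\sigma_{2}({D_t}^{3})c(\xi)/|\xi|^{8}$ after two applications of $c(\xi)c(\xi)=-|\xi|^{2}$. For the second piece I would use Lemma \ref{le:32}: at the point $x_{0}$ on the boundary the only nonvanishing derivative is $\partial_{x_{n}}(|\xi|^{2})(x_{0})=h'(0)|\xi'|^{2}_{g^{\partial M}}$, while $\partial_{x_{j}}[c(\xi)](x_{0})=0$ for $j<n$ and $\partial_{x_{n}}[c(\xi)](x_{0})=\partial_{x_{n}}c(\xi')(x_{0})$. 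Expanding $D_{x_{j}}(p_{3}^{-1})=-i\partial_{x_{j}}\bigl(c(\xi)/|\xi|^{4}\bigr)$ and summing over $j$ then yields exactly the bracketed boundary contribution
$|\xi|^{4}c(dx_{n})\partial_{x_{n}}c(\xi')-2h'(0)c(dx_{n})c(\xi)+2\xi_{n}c(\xi)\partial_{x_{n}}c(\xi')+4\xi_{n}h'(0)$ appearing in the statement, divided by $|\xi|^{8}$ after multiplying on the left by $p_{3}^{-1}$.

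The only real bookkeeping step is to reorganise the sum $\sum_{j}\partial_{\xi_{j}}p_{3}\,D_{x_{j}}p_{3}^{-1}$ using the Clifford identities so that every occurrence of $c(dx_{j})$, $\partial_{x_{n}}(|\xi|^{2})$ and $\partial_{x_{n}}c(\xi')$ is grouped as above; I expect this to be the main (purely mechanical) obstacle, but it is identical in structure to the derivation of $\sigma_{-4}(({D_t}^{*}{D_t}{D_t}^{*})^{-1})$ in Lemma \ref{clem2}, since the two operators $\sigma_{3}$ are equal and the two $\sigma_{2}$'s differ only by terms which are computed analogously. Combining the two pieces gives the stated formula for $\sigma_{-4}({D_t}^{-3})$, completing the proof.
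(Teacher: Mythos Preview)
Your proposal is correct and follows essentially the same approach as the paper: the paper simply remarks that the lemma follows from (\ref{c50})--(\ref{c53}) (i.e.\ the composition formula applied to $p_{3}$ and $p_{2}$ from Lemma~\ref{clema1}), and you have spelled out precisely those steps, including the key Clifford identity $c(\xi)^{2}=-|\xi|^{2}$ for inverting $p_{3}$ and the use of Lemma~\ref{le:32} to reduce the $\sum_{j}\partial_{\xi_{j}}p_{3}\,D_{x_{j}}p_{3}^{-1}$ term at $x_{0}$. Your observation that the computation is formally identical to that for $\sigma_{-4}(({D_t}^{*}{D_t}{D_t}^{*})^{-1})$ in Lemma~\ref{clem2} (since the two leading symbols $p_{3}$ coincide) is exactly the point.
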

\begin{thm}\label{cthm2}
Let $M$ be a $6$-dimensional
compact oriented manifold with boundary $\partial M$ and the metric
$g^{TM}$ be defined as (\ref{b1}), ${D_t}$ be a sub-signature operator on $\widetilde{M}$ ($\widetilde{M}$ is a collar neighborhood of $M$) as in (\ref{a8}), (\ref{a9}), then
\begin{align}\label{c55}
&\widetilde{{\rm Wres}}[\pi^+{D_t}^{-1}\circ\pi^+(
      {D_t}^{-3})]\nonumber\\
&=128\pi^3\int_{M}\bigg(-\frac{16}{3}K
\bigg)d{\rm Vol_{M}}+\int_{\partial M}\bigg((\frac{65}{8}-\frac{41}{8}i)\pi h'(0)\bigg)\Omega_4d{\rm Vol_{M}},
\end{align}
where $K$ is the scalar curvature.
\end{thm}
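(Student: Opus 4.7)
The plan is to follow the same five-case decomposition used in Theorem \ref{cthm1}, exploiting the close parallel between the symbols of $D_t^3$ and those of $D_t^*D_tD_t^*$. The interior term is already determined by Theorem \ref{thm2} (specialized to $n=6$), giving $128\pi^3\int_M(-\tfrac{16}{3}K)d\mathrm{Vol}_M$ as in \eqref{c49}. Hence only the boundary integral $\int_{\partial M}\overline{\Psi}$ remains, and one must show it equals $\left(\tfrac{65}{8}-\tfrac{41}{8}i\right)\pi h'(0)\Omega_4\,dx'$.

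The first step is to write out $\overline{\Psi}=\overline{\Psi}_1+\cdots+\overline{\Psi}_5$ according to the five index patterns
\begin{align*}
&\text{(a)(I)}\ r{=}-1,\ell{=}-3,|\alpha|{=}1,j{=}k{=}0; \quad \text{(a)(II)}\ j{=}1,\,|\alpha|{=}k{=}0;\quad \text{(a)(III)}\ k{=}1,\,|\alpha|{=}j{=}0;\\
&\text{(b)}\ r{=}-1,\ell{=}-4; \quad \text{(c)}\ r{=}-2,\ell{=}-3,
\end{align*}
exactly as in Theorem \ref{cthm1}. By Lemma \ref{clem5} we have $\sigma_{-3}(D_t^{-3})=ic(\xi)/|\xi|^4=\sigma_{-3}((D_t^*D_tD_t^*)^{-1})$, so the three (a)-cases only involve the symbols $\sigma_{-1}(D_t^{-1})$ and $\sigma_{-3}(D_t^{-3})$, which coincide with the corresponding ingredients in \eqref{c12}, \eqref{c14}, \eqref{c19}. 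Consequently $\overline{\Psi}_1=\Psi_1=0$, $\overline{\Psi}_2=\Psi_2=-\tfrac{15}{2}\pi h'(0)\Omega_4\,dx'$, and $\overline{\Psi}_3=\Psi_3=\tfrac{25}{2}\pi h'(0)\Omega_4\,dx'$, with no recomputation required.

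For cases (b) and (c) the only place where $D_t^3$ and $D_t^*D_tD_t^*$ differ is in $\sigma_2$: comparing Lemma \ref{clem1} with Lemma \ref{clema1}, the combination $2[|\xi|^2\overline{t}A-c(\xi)tAc(\xi)]+|\xi|^2(\overline{t}A)^2$ is replaced by $2[|\xi|^2 tA-c(\xi)tAc(\xi)]+|\xi|^2(tA)^2$. All other summands (the $\Gamma^k$, $\sigma^k$, $a^k$ and $\omega_{s,t}$ pieces) are literally the same. So in the formulas for $\sigma_{-4}(D_t^{-3})$ used in \eqref{c24} the non-$A$ contributions produce exactly the same trace as in Theorem \ref{cthm1}, yielding the $(-\tfrac{19}{4}i-15)\pi h'(0)\Omega_4\,dx'$ plus the $Q_0^1,Q_0^2$ contributions of $-\tfrac{75}{8}\pi h'(0)\Omega_4\,dx'$ etc. The remaining task is to check that the $A$-dependent terms still give zero trace contribution: this is precisely the content of \eqref{c26}--\eqref{c27} and the vanishing argument in \eqref{c45}, which only used ${\rm tr}[Ac(dx_n)]=0$, ${\rm tr}[Ac(\xi')]=0$ (from the Clifford relations and $g^{TM}(f_\alpha,S(e_i)f_\alpha)=0$), together with the parity identity $\int_{|\xi'|=1}\xi_{i_1}\cdots\xi_{i_{2d+1}}\sigma(\xi')=0$. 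Since these trace identities are coefficient-free in $t$ versus $\overline{t}$, they apply verbatim, giving $\overline{\Psi}_4=\Psi_4=\left(-\tfrac{41}{8}i-\tfrac{195}{8}\right)\pi h'(0)\Omega_4\,dx'$ and $\overline{\Psi}_5=\Psi_5=\tfrac{55}{2}\pi h'(0)\Omega_4\,dx'$.

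Summing, $\overline{\Psi}=\left(\tfrac{65}{8}-\tfrac{41}{8}i\right)\pi h'(0)\Omega_4\,dx'$, which combined with \eqref{c47} and \eqref{c49} yields the claimed identity. The main obstacle, in principle, would have been recomputing cases (b) and (c) from scratch; in practice the obstacle dissolves once one verifies that the only new contributions coming from replacing $\overline{t}A$ by $tA$ in $\sigma_2$ are of the form ${\rm tr}[Ac(dx_n)]$, ${\rm tr}[Ac(\xi')]$, and ${\rm tr}[A^2\cdot(\text{even Clifford monomial})]$ paired against odd-power $\xi'$ integrands, all of which vanish for the same reasons as in Section~\ref{section:2}. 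Thus the boundary term is unchanged under the transition from $(D_t^*D_tD_t^*)^{-1}$ to $D_t^{-3}$.
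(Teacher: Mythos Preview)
Your proposal is correct and follows essentially the same approach as the paper: the same five-case decomposition, the same observation that $\sigma_{-3}(D_t^{-3})=\sigma_{-3}((D_t^*D_tD_t^*)^{-1})$ makes cases (a)(I)--(III) and (c) carry over verbatim from Theorem~\ref{cthm1}, and the same trace identities ${\rm tr}[Ac(dx_n)]={\rm tr}[Ac(\xi')]=0$ to kill the only new contributions in case (b). The paper actually rewrites the trace in case (b) explicitly (its analogue of \eqref{c25}) before invoking these vanishings, whereas you argue structurally that the $t$ versus $\overline t$ change in $\sigma_2$ cannot affect the outcome; both routes land on $\overline{\Psi}_4=\Psi_4$ and the same total $\overline{\Psi}$.
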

\begin{proof}
When $n=6$, then ${\rm tr}_{\wedge^*T^*M}[\texttt{id}]=64$.
Since the sum is taken over $r+\ell-k-j-|\alpha|-1=-6, \ r\leq-1, \ell\leq -3$, then we have the $\int_{{\partial_M}}\overline{\Psi}$
is the sum of the following five cases:

~\\
\noindent  {\bf case (a)~(I)}~$r=-1, l=-3, j=k=0, |\alpha|=1$.\\
By (\ref{c48}), we get
 \begin{equation}\label{c56}
\overline{\Psi}_1=-\int_{|\xi'|=1}\int^{+\infty}_{-\infty}\sum_{|\alpha|=1}{\rm trace}
\Big[\partial^{\alpha}_{\xi'}\pi^{+}_{\xi_{n}}\sigma_{-1}({D_t}^{-1})
      \times\partial^{\alpha}_{x'}\partial_{\xi_{n}}\sigma_{-3}({D_t}^{-3})\Big](x_0)d\xi_n\sigma(\xi')dx'.
\end{equation}
\noindent  {\bf case (a)~(II)}~$r=-1, l=-3, |\alpha|=k=0, j=1$.\\
By (\ref{c48}), we have
  \begin{equation}\label{c57}
\overline{\Psi}_2=-\frac{1}{2}\int_{|\xi'|=1}\int^{+\infty}_{-\infty} {\rm
trace} \Big[\partial_{x_{n}}\pi^{+}_{\xi_{n}}\sigma_{-1}({D_t}^{-1})
      \times\partial^{2}_{\xi_{n}}\sigma_{-3}({D_t}^{-3})\Big](x_0)d\xi_n\sigma(\xi')dx'.
\end{equation}
\noindent  {\bf case (a)~(III)}~$r=-1,l=-3,|\alpha|=j=0,k=1$.\\
By (\ref{c48}), we have
 \begin{equation}\label{c58}
\overline{\Psi}_3=-\frac{1}{2}\int_{|\xi'|=1}\int^{+\infty}_{-\infty}{\rm trace} \Big[\partial_{\xi_{n}}\pi^{+}_{\xi_{n}}\sigma_{-1}({D_t}^{-1})
      \times\partial_{\xi_{n}}\partial_{x_{n}}\sigma_{-3}({D_t}^{-3})\Big](x_0)d\xi_n\sigma(\xi')dx'.
\end{equation}\\
By Lemma \ref{clem2} and Lemma \ref{clem5}, we have $\sigma_{-3}(({D_t}^*{D_t}{D_t}^*)^{-1})=\sigma_{-3}({D_t}^{-3})$
, by (\ref{c56})-(\ref{c58}), we obtain
$$\sum_{i=1}^3\overline{\Psi}_i=5\pi h'(0)\Omega_{4}dx',$$
 where ${\rm \Omega_{4}}$ is the canonical volume of $S^{4}.$\\
\noindent  {\bf case (b)}~$r=-1,l=-4,|\alpha|=j=k=0$.\\
By (\ref{c48}), we have
 \begin{align}\label{c59}
\overline{\Psi}_4&=-i\int_{|\xi'|=1}\int^{+\infty}_{-\infty}{\rm trace} \Big[\pi^{+}_{\xi_{n}}\sigma_{-1}({D_t}^{-1})
      \times\partial_{\xi_{n}}\sigma_{-4}({D_t}^{-3})\Big](x_0)d\xi_n\sigma(\xi')dx'\nonumber\\
&=i\int_{|\xi'|=1}\int^{+\infty}_{-\infty}{\rm trace} [\partial_{\xi_n}\pi^+_{\xi_n}\sigma_{-1}({D_t}^{-1})\times
\sigma_{-4}({D_t}^{-3})](x_0)d\xi_n\sigma(\xi')dx'.
\end{align}
Then, we obtain
\begin{align}\label{c60}
\sigma_{-4}({D_t}^{-3})(x_{0})|_{|\xi'|=1}&=
\frac{c(\xi)\sigma_{2}({D_t}^{3})
(x_{0})|_{|\xi'|=1}c(\xi)}{|\xi|^8}
-\frac{c(\xi)}{|\xi|^4}\sum_j\partial_{\xi_j}\big(c(\xi)|\xi|^2\big)
D_{x_j}\big(\frac{ic(\xi)}{|\xi|^4}\big)\nonumber\\
&=\frac{1}{|\xi|^8}c(\xi)\Big(\frac{1}{2}h'(0)c(\xi)\sum_{k<n}\xi_k
c(e_k)c(e_n)-\frac{1}{2}h'(0)c(\xi)\sum_{k<n}\xi_k
\widehat{c}(e_k)\widehat{c}(e_n)\nonumber\\
&-\frac{5}{2}h'(0)\xi_nc(\xi)-\frac{1}{4}h'(0)|\xi|^2c(dx_n)
-2[c(\xi)tAc(\xi)+|\xi|^2\overline{t}A]+|\xi|^2tA\Big)c(\xi)\nonumber\\
&+\frac{ic(\xi)}{|\xi|^8}\Big(|\xi|^4c(dx_n){\partial_t}_{x_n}c(\xi')
-2h'(0)c(dx_n)c(\xi)+2\xi_{n}c(\xi)\partial_{x_n}c(\xi')+4\xi_{n}h'(0)\Big).\nonumber\\
\end{align}
By (\ref{38}) and (\ref{c60}), we have
\begin{align}\label{c61}
&{\rm tr} [\partial_{\xi_n}\pi^+_{\xi_n}\sigma_{-1}({D_t}^{-1})\times
\sigma_{-4}({D_t}^{-3}) ](x_0)|_{|\xi'|=1} \nonumber\\
&=\frac{1}{2(\xi_{n}-i)^{2}(1+\xi_{n}^{2})^{4}}\big(\frac{3}{4}i+2+(3+4i)\xi_{n}+(-6+2i)\xi_{n}^{2}+3\xi_{n}^{3}+\frac{9i}{4}\xi_{n}^{4}\big)h'(0){\rm tr}
[id]\nonumber\\
&+\frac{1}{2(\xi_{n}-i)^{2}(1+\xi_{n}^{2})^{4}}\big(-1-3i\xi_{n}-2\xi_{n}^{2}-4i\xi_{n}^{3}-\xi_{n}^{4}-i\xi_{n}^{5}\big){\rm tr[c(\xi')\partial_{x_n}c(\xi')]}\nonumber\\
&-\frac{1}{2(\xi_{n}-i)^{2}(1+\xi_{n}^{2})^{4}}\big(\frac{1}{2}i+\frac{1}{2}\xi_{n}+\frac{1}{2}\xi_{n}^{2}+\frac{1}{2}\xi_{n}^{3}\big){\rm tr}
[c(\xi')\widehat{c}(\xi')c(dx_n)\widehat{c}(dx_n)]\nonumber\\
&+\frac{-3\xi_ni+1}{2(\xi_{n}-i)^{4}(i+\xi_{n})^{3}}{\rm tr}\big[tAc(dx_n)\big]
-\frac{\xi_n+3i}{2(\xi_{n}-i)^{4}(i+\xi_{n})^{3}}{\rm tr}\big[tAc(\xi')\big].\nonumber\\
\end{align}
By (\ref{c58})-(\ref{c61}), we have
\begin{align}\label{c62}
\overline{\Psi}_4&=
 ih'(0)\int_{|\xi'|=1}\int^{+\infty}_{-\infty}64\times\frac{\frac{3}{4}i+2+(3+4i)\xi_{n}+(-6+2i)\xi_{n}^{2}+3\xi_{n}^{3}+\frac{9i}{4}\xi_{n}^{4}}{2(\xi_n-i)^5(\xi_n+i)^4}d\xi_n\sigma(\xi')dx'\nonumber\\ &+ih'(0)\int_{|\xi'|=1}\int^{+\infty}_{-\infty}32\times\frac{1+3i\xi_{n}
 +2\xi_{n}^{2}+4i\xi_{n}^{3}+\xi_{n}^{4}+i\xi_{n}^{5}}{2(\xi_{n}-i)^{2}
 (1+\xi_{n}^{2})^{4}}d\xi_n\sigma(\xi')dx'\nonumber\\
 &+i\int_{|\xi'|=1}\int^{+\infty}_{-\infty}
 \frac{-3\xi_ni +1}{2(\xi_{n}-i)^{4}(i+\xi_{n})^{3}}{\rm tr}\big[tAc(dx_n)\big]d\xi_n\sigma(\xi')dx'\nonumber\\
 &-i\int_{|\xi'|=1}\int^{+\infty}_{-\infty}
 \frac{\xi_n+3i}{2(\xi_{n}-i)^{4}(i+\xi_{n})^{3}}{\rm tr}\big[tAc(\xi')\big]d\xi_n\sigma(\xi')dx'\nonumber\\
&=(-\frac{41}{8}i-\frac{195}{8})\pi h'(0)\Omega_4dx'.
\end{align}
\noindent {\bf  case (c)}~$r=-2,l=-3,|\alpha|=j=k=0$.\\
By (\ref{c48}), we have
\begin{equation}\label{c63}
\overline{\Psi}_5=-i\int_{|\xi'|=1}\int^{+\infty}_{-\infty}{\rm trace} \Big[\pi^{+}_{\xi_{n}}\sigma_{-2}({D_t}^{-1})
      \times\partial_{\xi_{n}}\sigma_{-3}({D_t}^{-3})\Big](x_0)d\xi_n\sigma(\xi')dx'.
\end{equation}\\
By Lemma \ref{clem1} and Lemma \ref{clem5}, we have $\sigma_{-3}(({D_t}^*{D_t}{D_t}^*)^{-1})=\sigma_{-3}({D_t}^{-3})$
, by (\ref{c28})-(\ref{c44}), we obtain
$$
\overline{\Psi}_5=\frac{55}{2}\pi h'(0)\Omega_4dx'.
$$
Now $\overline{\Psi}$ is the sum of the cases (a), (b) and (c), then
\begin{equation}\label{c64}
\overline{\Psi}=\bigg[(\frac{65}{8}-\frac{41}{8}i)\pi h'(0)\bigg]\Omega_4dx'.
\end{equation}
By (\ref{c47})-(\ref{c49}), we obtain Theorem \ref{cthm2}.\\
\end{proof}

\section*{Acknowledgements}
This work was supported by NSFC. 11771070 .
 The authors thank the referee for his (or her) careful reading and helpful comments.

\section*{References}


\begin{thebibliography}{00}
\bibitem{Ac} Ackermann T.: A note on the Wodzicki residue. J. Geom.Phys. 20, 404-406, (1996).
\bibitem{BWW} Bao K, Wang J, Wang Y.: A local equivariant index theorem for sub-signature operators.arXiv:1312.3721, 2013.
\bibitem{Co1} Connes A.: Quantized calculus and applications. 11th International Congress of Mathematical Physics(Paris,1994), Internat Press, Cambridge, MA, 15-36, (1995).
\bibitem{Co2} Connes A.: The action functinal in Noncommutative geometry. Comm. Math. Phys. 117, 673-683, (1998).
\bibitem{DZ} Dai X, Zhang W.: Adiabatic limit, Bismut-Freed connection, and the real analytic torsion form. J. reine angew. Math. 647, 87-113(2010).
\bibitem{FGLS} Fedosov B V, Golse F, Leichtnam E, Schrohe E.: The noncommutative residue for manifolds with boundary. J. Funct. Anal. 142, 1-31, (1996).
\bibitem{G} Gilkey P B.: Invariance Theory, the Heat Equation, and the Atiyah-Singer Index Theorem. Vol 11 of mathematics Lecture Series, 1984.
\bibitem{Gu} Guillemin V W.: A new proof of Weyl's formula on the asymptotic distribution of eigenvalues. Adv. Math. 55, no. 2, 131-160, (1985).
\bibitem{LI} Iochum B, Levy C.: Tadpoles and commutative spectral triples. J. Noncommut. Geom. 299-329(2011).
\bibitem{KW} Kalau W, Walze M.: Gravity, Noncommutative geometry and the Wodzicki residue. J. Geom. Physics. 16, 327-344,(1995).
\bibitem{Ka} Kastler D.: The Dirac Operator and Gravitation. Comm. Math. Phys. 166, 633-643, (1995).
\bibitem{MZ} Ma X, Zhang W.: $\eta$-invariants, torsion forms and flat vector bundles. Math. Ann. 340: 569-624(2008)
\bibitem{Po} Ponge R.: Noncommutative geometry and lower dimensional volumes in Riemannian geometry. Lett. Math. Phys. 83, no.1, 19-32, (2008).
    \bibitem{RS} Rempel S, Schulze B W.: Index theory of elliptic boundary problems. Akademie-Verlag, Berlin. 1982. 393 pp.
 \bibitem{SE} Schrohe, E.: Noncommutative residue, Dixmier's trace, and heat trace expansions on
manifolds with boundary. Contemp. Math. 242, 161-186, (1999).
\bibitem{Wa5} Wang J, Wang Y.: The Kastler-Kalau-Walze type theorem for six-dimensional manifolds with boundary. J. Math. Phys. 56, 052501 (2015).
    \bibitem{Ww}Wang J, Wang Y.: Twisted Dirac Operators and the Noncommutative Residue for Manifolds with Boundary. J pseudo-Differ. Oper. Appl. 2016. 7. (no.2) 181-211.
    \bibitem{Wwl}Wang J, Wang Y, Yang C L.: Dirac Operators with Torsion and the Noncommutative Residue for Manifolds with Boundary[J]. Journal of Geometry Physics, 2014, 81:92-111.
\bibitem{Wa1} Wang Y.: Diffential forms and the Wodzicki residue for Manifolds with Boundary. J. Geom. Physics. 56, 731-753, (2006).
\bibitem{Wa2} Wang Y.: Diffential forms the Noncommutative Residue for Manifolds with Boundary in the non-product Case. Lett. Math. Phys. 77, 41-51, (2006).
\bibitem{Wa3} Wang Y.: Gravity and the Noncommutative Residue for Manifolds with Boundary. Lett. Math. Phys. 80, 37-56, (2007).
\bibitem{Wa4} Wang Y.: Lower-Dimensional Volumes and Kastler-kalau-Walze Type Theorem for Manifolds with Boundary. Commun. Theor. Phys. Vol 54, 38-42, (2010).
\bibitem{Wo} Wodzicki M.: Local invariants of spectral asymmetry. Invent. Math. 75(1), 143-178, (1995).
\bibitem{Y} Yu Y.: The Index Theorem and The Heat Equation Method, Nankai Tracts in Mathematics-Vol.2, World Scientific Publishing, (2001).
\bibitem{wpz1} Zhang W.: Sub-signature operator and its local index theorem. Chinese Sci. Bull. 41, 294-295 (1996). (in Chinese)
\bibitem{wpz2} Zhang W.: Sub-signature operators, $\eta$-invariants and a Riemann-Roch theorem for flat vector bundles. Chin. Ann. Math.
25B, 7-36 (2004).
\end{thebibliography}
\end{document}